\documentclass[a4paper, 12pt]{amsart}
\newcommand{\cev}[1]{\reflectbox{\ensuremath{\vec{\reflectbox{\ensuremath{#1}}}}}}
\usepackage[margin=2.4cm]{geometry}
\usepackage{amsmath}
\usepackage{amsfonts}
\usepackage{amssymb}
\usepackage{xcolor}
\usepackage{mathrsfs}
\usepackage{etoolbox}
\usepackage{array}
\AtBeginEnvironment{theorem}{\hangindent=2em}
\renewenvironment{proof}{{\bfseries Proof.}}{\qed}
\numberwithin{equation}{section} 
\newtheorem{theorem}{Theorem}[section] 
\newtheorem{pro}[theorem]{Proposition} 
 
\newtheorem{lemma}[theorem]{Lemma} 

\newtheorem{question}[theorem]{Question}
\theoremstyle{definition}
\newtheorem{definition}[theorem]{Definition}
\newtheorem{defn}[theorem]{Definition} 
\newtheorem{remark}[theorem]{Remark} 
\newtheorem{example}[theorem]{Example}

\newcommand{\ap}{\alpha}

\newcommand{\imp}{\Rightarrow}
\newcommand{\z}{\mathbb{Z}}

\newcommand{\m}{\gamma^{m}_{p}}

\newcommand{\thmref}[1]{Theorem~\ref{#1}}
\newcommand{\lemref}[1]{Lemma~\ref{#1}}

\newcommand{\propref}[1]{Proposition~\ref{#1}}

\newcommand{\rkref}[1]{Remark~\ref{#1}}
\newcommand{\bl}[1]{\textcolor{blue}{#1}}

\numberwithin{equation}{section}

\usepackage[backref]{hyperref}

\usepackage{tikz,standalone}
\usepackage{tikz-cd}
\usetikzlibrary{arrows,chains,quotes,matrix,positioning,scopes}
\usepackage{pgfplots}
\pgfplotsset{compat=1.15}
\usetikzlibrary{arrows}
\usetikzlibrary{lindenmayersystems}

\usepackage[font=small,labelfont=bf]{caption}

\begin{document}

\title[Minimal Filling pair of non orientable surfaces]{Minimal Filling pair of non orientable surfaces}
\author[D. Das]{Debattam Das}
\address{Indian Institute of Technology Kanpur, Kanpur 208016, Uttar Pradesh, India}
\email{debattam123@gmail.com, debattam@iitk.ac.in }
\author[S. Pal]{Souvik Pal }
\address{Indian Institute of Technology Kanpur, Kanpur 208016, Uttar Pradesh, India}
\email{souvikp24@iitk.ac.in}
\author[B. Sanki]{Bidyut Sanki}
\address{Indian Institute of Technology Kanpur, Kanpur 208016, Uttar Pradesh, India}
\email{bidyut@iitk.ac.in}
\makeatletter
\@namedef{subjclassname@2020}{\textup{2020} Mathematics Subject Classification}
\makeatother
\keywords{ Non-orientable surface, Filling pairs, Mapping class group, Minimal intersection number, Superexponential growth, twisting function}

\subjclass[2020]{Primary: 57M50, Secondary: 57M15, Secondary: 05C10}
\date{\today}
   \begin{abstract}
For $g\ge 3$, let $N_g$ denote the non-orientable surface of genus $g$. In this article, we establish the existence of filling pairs on $N_g$ that intersect minimally by construction using the theory of fat graphs. The mapping class group $\mathrm{Mod}(N_g)$ acts on the set of all such filling pairs. We count $\mathrm{Mod}(N_g)$-orbits of this action  by providing both lower and upper bounds. Furthermore, we show that both bounds grow super-exponentially with $g$ using graph cohomology. Also, we investigate the lengths of minimally intersecting filling pairs on hyperbolic non-orientable surfaces $X$ in moduli space $\mathcal{M}_g$ of $N_g$. We define a function $\mathcal{F}_g:\mathcal{M}_g\to\mathbb{R}_{>0}$, where for $X\in \mathcal{M}_g$, the function $\mathcal{F}_g(X)$ is the shortest total length of a minimally intersecting filling pair on $X$. We determine its minimum $m_g$ and show that the set of minimizers is in bijection with the \(\mathrm{Mod}(N_g)\)-orbits of minimally intersecting filling pairs. We further extend \(\mathcal{F}_g\) to \(\mathcal{Y}_g\), defined by minimizing the length over all filling pairs, and show that \(\mathcal{Y}_g\) attains the same minimum value as \(\mathcal{F}_g\).

\end{abstract}
\maketitle

\section{Introduction}
 Two-dimensional manifold, called \emph{surfaces}, are classified into two types: orientable and non-orientable. A surface is said to be \emph{non-orientable} if it contains an embedded M\"{o}bius strip. Otherwise, it is \emph{orientable}.

By a closed surface, we mean a surface without punctures and boundary. The classification theorem of surfaces (see \cite[Proposition 6.2.7 ]{martelli2016introduction}) says that ,
(i) Every closed connected orientable surface is homeomorphic to the connected sum of $g$ many tori with a sphere, where $g$ is a non-negative integer. Such a surface is denoted by $S_g$ and the integer $g$ is called the genus of the surface. 
(ii) Every closed connected non-orientable surface is homeomorphic to the connected sum of finitely many copies of the real projective plane $\mathbb{RP}^2$.  Such a surface is denoted by $N_g$. where $g $ is called the genus of the surface. Alternatively, the surface $N_g$ may be viewed as a sphere with $g$ \emph{crosscaps} attached. Note that, a crosscap is obtained by removing an open disk from the sphere and identifying antipodal points on the resulting boundary circle.

In this article, by a surface we always mean a closed surface. A \emph{simple closed curve} $\alpha$ on a surface $S$ is the image of an embedding of a circle $\mathbb{S}^1$ into the surface $S$. A simple closed curve is called \emph{trivial} if it bounds a disk or a crosscap. Otherwise, it is called essential. We are interested in essential curves on surfaces.

A collection $\Gamma=\{\gamma_1,\ldots,\gamma_k\}$ of homotopically distinct simple closed curves on a surface $S$ is called a \emph{filling system} if $\gamma_i$'s are in pairwise minimal position (i.e. they do not form a bigon)and every connected component of $S\setminus \bigcup_{i=1}^{k}\gamma_i$ is a topological disk. Equivalently, every essential simple closed curve on $S$ intersects at least one curve in $\Gamma$.

A filling system consisting of exactly two curves is called a \emph{filling pair}. Thus, a pair of simple closed curves $\{\alpha,\beta\}$ forms a filling pair on $S$ if
$
S\setminus (\alpha\cup\beta)
$
is a disjoint union of disks. The filling pair $\{\alpha,\beta\}$ is called \emph{minimally intersecting} if the number $i(\alpha,\beta)$ of intersection points between $\alpha$ and $\beta$ is minimum (for more details see in \cite{MR2850125}). By the Euler characteristic equation, we have,
\[
i(\alpha,\beta)\geq\begin{cases}
2g-1, & S=S_g, \text{ and}\\
g-1, & S=N_g.\\
\end{cases}
\]
Furthermore, equality holds if and only if the complement is a single topological disk. From now on, we call such a filling pair minimal.

The study of filling curves was initiated by Thurston in his unpublished preprint \cite{thurston1986spine}, where he proposed the construction of a spine for the moduli space of Riemann surfaces using filling curves. Recently, Bourque \cite{fortier2024dimension} showed that for every $\varepsilon>0$, there exists a genus $g\geq 2$ such that the dimension of Thurston's spine is at least $(5-\varepsilon)g$.

Filling curves have also played an important role in the study of mapping class groups. In particular, Penner \cite{penner1988construction} used filling multicurves to construct pseudo-Anosov mapping classes.

The combinatorial and topological properties of filling systems on orientable surfaces have been extensively studied. Anderson--Parlier--Pettet \cite{MR2734699}, Sanki \cite{MR3881044}, and Fanoni--Parlier \cite{MR3548116} investigated various aspects of filling systems and their geometric realizations. Aougab--Huang \cite{MR3342680} studied minimal filling pairs on closed orientable surfaces, while Saha--Sanki \cite{MR4926623} examined separating filling pairs on such surfaces. For punctured orientable surfaces, Jeffreys \cite{jeffreys2019minimally} investigated minimally intersecting filling pairs.

In this article, we study the filling system of non-orientable surfaces. To the best of our knowledge, on non-orientable surfaces, filling systems have not yet been investigated in the literature. The only closely related work appears to be \cite{MR4597639}, where the authors describe the possible closures of mapping class group orbits of measured laminations, projective measured laminations, and points in Teichm\"{u}ller space of non-orientable surfaces.
 
 Analogous to the question Aougab--Huang \cite{MR3342680}, here in the context of non-orientable surfaces, we consider the following question.

\begin{question}\label{question 1}
    
 Let $N_g$, $g\in\mathbb{N}$, denote the non-orientable surface of genus $g$. Does there exist a minimal filling pair on $N_g$?
\end{question}

To answer Question~\ref{question 1}, we prove the following theorem.

\begin{theorem}\label{construction theorem}
Let $g>1$, and let $G$ be a $4$-valent fat graph with $g-1$ vertices and two standard cycles. Then there exists a non-orientable surface $N_g^G$, homeomorphic to $N_g$, into which $G$ embeds as a minimal filling pair.
\end{theorem}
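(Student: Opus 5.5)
Since $G$ comes with a fat-graph structure (a cyclic order at each vertex) and two standard cycles, the plan is to build the surface by hand: thicken $G$ into a ribbon surface with boundary, then attach a disk to every boundary component. The thickening will not be the orientable one. At each vertex we choose a local ``twist'' so that exactly one boundary component appears and the ribbon surface is non-orientable. Gluing a single disk then gives a closed surface $N_g^G$ in which $G$ sits with complement one disk. The two standard cycles become two simple closed curves $\alpha,\beta$ that meet transversally exactly at the $g-1$ vertices. The steps below carry this out in order.

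\textbf{Step 1 (Euler characteristic and genus).} $G$ is $4$-valent with $V=g-1$ vertices, so it has $E=2(g-1)$ edges. If the thickening has exactly one boundary component and is capped by one disk, the closed surface has
\[
\chi = V - E + 1 = (g-1) - 2(g-1) + 1 = 2-g .
\]
A non-orientable closed surface with $\chi = 2-g$ is homeomorphic to $N_g$ by the classification theorem. So it suffices to arrange two things: (a) the thickening is non-orientable, and (b) it has exactly one boundary component.

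\textbf{Step 2 (choice of twists).} Each vertex is a transverse crossing of the two standard cycles. Assign to each edge a sign: untwisted or half-twisted band. Non-orientability holds as soon as some cycle of $G$ carries an odd number of half-twists. Boundary components are computed by the usual face-tracing permutation, modified by the twisting. The key observation is that inserting a half-twist on an edge changes the face permutation by a transposition-type operation. If the edge lies on two distinct boundary components, the twist merges them; if it lies twice on the same component, the twist either splits that component or keeps it as one. So I would proceed inductively. Start from any twisting (for instance, the untwisted fat-graph structure). While there are at least two boundary components, choose an edge that lies on two different components, which exists because $G$ is connected, and twist it. This reduces the number of components by one. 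This terminates with one boundary component.

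\textbf{Step 3 (forcing non-orientability).} If the ribbon surface obtained in Step 2 happens to be orientable, I would modify it without increasing the number of boundary components. Under an orientable one-face structure, twisting an edge traversed twice by the single face yields either one face or two, depending on whether the edge is traversed in the same or opposite directions. In an orientable surface every edge is traversed in opposite directions, and then a single half-twist gives one non-orientable face. The parity bookkeeping here (that a single twist on a one-faced orientable ribbon graph keeps one face) is the point I expect to be the main obstacle, and I would verify it directly with the face-tracing permutation. Alternatively, one could cite the known fact that any connected graph with a cycle admits a non-orientable one-face embedding when the Euler characteristic parity allows, and here the parity always allows, since non-orientable surfaces exist for every $\chi \le 1$.

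\textbf{Step 4 (the curves form a minimal filling pair).} Twisting bands along edges does not change the local picture at a vertex. The two standard cycles still cross transversally there, so they give simple closed curves $\alpha,\beta$ with $i(\alpha,\beta)\le g-1$ and a single disk as complement. The curves are in minimal position: a bigon would be a disk component of the complement bounded by one arc of $\alpha$ and one arc of $\beta$, but the only complementary region is the single $4(g-1)$-gon, and $g \ge 3$. Thus $i(\alpha,\beta)=g-1$, which is the lower bound from the Euler characteristic, so the pair is minimal. The case $g=2$, where $G$ would have one vertex, is handled by the explicit single-vertex picture on the Klein bottle.
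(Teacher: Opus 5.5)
Your proposal is correct and is essentially the paper's construction: your greedy step of twisting an edge whose two sides lie on distinct boundary components is the paper's twisting along a spanning tree of the boundary graph $\Gamma_G$. Your single twist on a one-face orientable thickening is exactly the paper's treatment of the case where $\Sigma(G)$ already has one boundary component, and the genus is read off from $\chi=2-g$ in the same way. Your Step 3 fallback and your bigon check for minimal position are more careful than the paper, which simply asserts non-orientability and minimality. You should, however, drop the alternative appeal to general one-face non-orientable embeddings, since such an embedding need not keep $\alpha$ and $\beta$ crossing transversally at the vertices.
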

We prove the theorem by means of an explicit construction based on $4$-valent fat graphs. More explicitly, we introduce twists on suitable edges in the thickening surface of $G$ to obtain a non-orientable surface with a single boundary component. The genus of the resulting surface is then determined by its Euler characteristic. Furthermore, we prove that for $g=1$, $N_g$ has no minimal filling pair (see Lemma~\ref{g=1} in Section~\ref{section 3}).

Having established the existence of minimal filling pairs on non-orientable surfaces, we proceed to study them up to the action of the mapping class group. Recall that the mapping class group $\mathrm{Mod}(N_g)$ is the group of isotopy classes of self-homeomorphisms of $N_g$.
For each $g\in\mathbb{N}$, let $\mathrm{MF}(N_g)$ denote the set of all minimal filling pairs on the non-orientable surface $N_g$. The group $\mathrm{Mod}(N_g)$ acts on the set $\mathrm{MF}(N_g)$ as follows:
\[
f\cdot\{\alpha,\beta\}=\{f\circ\alpha,f\circ\beta\} ,\;\text{for}\;f\in\mathrm{Mod}(N_g) \;\text{and}\; \{\alpha,\beta\} \in \mathrm{MF}(N_g)
\]
 Let
$\mathcal{N}(g)$
denote the set of $\mathrm{Mod}(N_g)$-orbits of minimal filling pairs. Motivated by Aougab--Huang \cite{MR3342680} and Saha--Sanki \cite{MR4926623}, in the context of minimal filling pairs in non-orientable surfaces, we ask the question below.

\begin{question}
What is the cardinality of $\mathcal{N}(g)$?
\end{question}
To address this question, we prove Theorem~\ref{bounds} which provides an explicit upper bound and a lower bound for $|\mathcal{N}(g)|$.

\begin{theorem}\label{bounds}
Let $N_g$ be the non-orientable surface of genus $g$, and let $\mathcal{N}(g)$ denote the set of $\mathrm{Mod}(N_g)$-orbits of minimally intersecting filling pairs. Then
\[
f(g)\leq |\mathcal{N}(g)|\leq (2^{2(g-1)}-2^{(g-2)})(g-2)!,
\]
where
\[
f(g)\sim C(g-1)^{-9/2}
\left(\frac{g-2}{2e}\right)^g.
\]
In particular, the number of mapping class group orbits of minimal filling pairs grows super-exponentially with $g$.
\end{theorem}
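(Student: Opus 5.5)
The plan is to identify $\mathrm{Mod}(N_g)$-orbits of minimal filling pairs with combinatorial data, and then count that data from above and below. A minimal filling pair $\{\alpha,\beta\}$ on $N_g$ has $g-1$ intersection points, and its complement is a single disk. So $\alpha\cup\beta$ is a $4$-valent graph with $g-1$ vertices, and the surface is recovered by thickening this graph and gluing one disk. On a non-orientable surface there is no global fat-graph structure. The right data is a $4$-valent graph $G$ together with a local cyclic order at each vertex and a twist assignment on the edges, taken modulo changing the local orientation at a vertex (which flips the twist status of the non-loop edges incident to it). The requirements are that the two standard cycles are the curves, that the thickening has exactly one boundary component, and that it is non-orientable.

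Two filling pairs lie in the same orbit exactly when there is a homeomorphism carrying one union to the other. Since the complement is a disk, such a homeomorphism is determined by the induced isomorphism of these decorated graphs. Hence $|\mathcal{N}(g)|$ equals the number of isomorphism classes of such decorated graphs with one boundary component that are non-orientable. The construction in \thmref{construction theorem} shows that every admissible graph actually arises.

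\textbf{Upper bound.} Fix $\alpha$ as a cycle through the $g-1$ vertices $v_1,\dots,v_{g-1}$, labelled in order along $\alpha$. Then $\beta$ visits these vertices in some cyclic order. Up to rotation, and normalising $\beta$ to start at $v_1$, there are at most $(g-2)!$ such orders. That leaves $2(g-1)$ edges ($g-1$ on $\alpha$ and $g-1$ on $\beta$), each either twisted or untwisted, which gives $2^{2(g-1)}$ assignments. From these we discard the assignments that produce an orientable thickening. An orientable thickening has odd Euler characteristic constraints incompatible with one boundary here; more concretely, we discard the twist assignments that are equivalent to the all-untwisted one under vertex flips. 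A count of the cocycle/coboundary type shows there are $2^{g-2}$ of these, since the flips at $g-1$ vertices act with a kernel of order $2$. This yields the bound $(2^{2(g-1)}-2^{g-2})(g-2)!$, because the one-boundary condition can only reduce the count further.

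\textbf{Lower bound.} Here I would use graph cohomology. Twist assignments modulo vertex flips are elements of $H^1(G;\mathbb{Z}/2)$, a group of order $2^{b_1(G)}=2^{g}$. Starting from the minimal filling pairs on orientable surfaces counted by Aougab--Huang, which are fat graphs with $g'$ vertices and one boundary, and using their super-exponential lower bound, I would produce distinct orbits on $N_g$. One route is to insert twists on chosen edges and check that the number of boundary components stays $1$, for instance by twisting a pair of edges at a vertex in a controlled way. Another is to connect-sum with a crosscap in a way that adds one intersection.

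Dividing by the size of the automorphism groups, which are bounded by $4(g-1)$ because the graph is determined by one flag, gives $f(g)$. Its asymptotic form $C(g-1)^{-9/2}((g-2)/2e)^g$ then follows from Stirling's formula applied to a factorial-type count such as $(g-2)!/2^g$ up to polynomial factors.

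The main obstacle is the lower bound. It requires exhibiting many decorated graphs whose thickening genuinely has one boundary component, and showing they are pairwise non-isomorphic. I would try first to fix a family of vertex orders for $\beta$ built from a single permutation pattern, and prove the one-boundary property by induction on $g$.
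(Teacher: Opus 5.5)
\textbf{Upper bound.} This part is essentially the paper's argument. Vertex flips change the twist cochain by a coboundary, a thickening is orientable exactly when $[\tau]=0$, and $|B^1(G;\mathbb{Z}/2\mathbb{Z})|=2^{g-2}$. Your way of organising the count is sound: use the flips, which act freely on the local cyclic orders, to normalise those orders, then let $\tau$ range over all of $C^1$. Drop the Euler characteristic remark, though. It only excludes orientable one-boundary thickenings when $g$ is odd.

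\textbf{Lower bound: the gap.} You never actually prove the lower bound.
\begin{itemize}
\item The Aougab--Huang route fails as stated. A minimal filling pair on $S_h$ has $2h-1$ intersection points, so twisting its thickening only yields minimal filling pairs on $N_{2h}$. Every odd $g$ is missed.
\item Aougab--Huang construct only exponentially many orbits, so importing their count cannot give super-exponential growth.
\item You give no argument that distinct orientable orbits stay distinct after twisting.
\item The crosscap connect-sum alternative is never specified.
\item A family of $\beta$-orders built from a single permutation pattern is far too small to be super-exponential.
\item Your $H^1(G;\mathbb{Z}/2\mathbb{Z})$ remark (order $2^g$) forgets the cyclic orders. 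Those are exactly the data that govern the number of boundary components, and the remark contradicts your own count of $2^{2(g-1)}$.
\item To pass from labelled to unlabelled objects, divide by the number of labellings of an isomorphism class (at most the number of flags), not by $|\mathrm{Aut}|$.
\end{itemize}

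\textbf{The missing idea.} \thmref{construction theorem} already removes the obstacle you single out. Its construction works for \emph{every} $4$-valent fat graph with two standard cycles. Twisting along the edges of a spanning tree of the boundary graph $\Gamma_G$ (Lemma~\ref{boundary graph connected}) always gives a non-orientable surface with one boundary component. If $\Sigma(G)$ already has one boundary component, a twist on a single edge does the same. You invoke that theorem only to realise decorated graphs already known to be admissible.

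Used fully, it shows that each of the $(g-2)!$ orders in which $\beta$ visits the vertices of $\alpha$ is realised by a minimal filling pair on $N_g$. A homeomorphism carrying one pair to another induces an isomorphism of $\alpha\cup\beta$ preserving the two cycles. Each isomorphism type arises from at most $8(g-1)$ orders: the choice of which curve is $\alpha$, a base vertex, and two directions. Hence $|\mathcal{N}(g)|\ge (g-2)!/(8(g-1))$. This already exceeds $(g-2)!/(2^{g+5}(g-1)^3)$, whose Stirling asymptotic is $f(g)$.

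The paper runs the same scheme, more lossily. It passes through the flip classes $\mathcal{G}_g^*$, the injection of Lemma~\ref{graph-injection}, and the orbit bound of Remark~\ref{remark:orbit-bound}. No induction on $g$ and no special permutation family is needed.
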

The idea of the proof is as follows: To prove the lower bound, we construct at least $f(g)$ many distinct mapping class group orbits of minimal filling pair of $N_g\;,\;g>1,$ by using the theory of fat graphs and edge-twists. For the upper bound, we introduce vertex-flip operation of fat graphs together with a cohomological argument to obtain the desired estimate.

 Next, we investigate the length of filling pairs on non-orientable surfaces equipped with a hyperbolic metric. For $g\geq 3$, the surface $N_g$ admits a pair of pants decomposition, which implies that $N_g$ can be equipped with a hyperbolic metric. Let $X$ be a hyperbolic non-orientable surface homeomorphic to $N_g$. In the free homotopy class of every closed curve in $X$ has a unique geodesic representative. The length $l_{X}(\alpha)$ of a closed curve $\alpha$ in $X$ is the length of the geodesic representative in the free homotopy class of $\alpha$. The length of a filling pair on $X$ is the sum of the individual lengths of two curves. Now we define a function  

\[
\mathcal{F}_g:\mathcal{M}_g\longrightarrow\mathbb{R},
\qquad
\mathcal{F}_g(\mathcal{X})
=
\min_{(\alpha,\beta)\in\mathcal{N}(g)}
\ell_{\mathcal{X}}(\alpha,\beta),
\]
where $\mathcal{M}_g$ denotes the moduli space of $N_g$. Thus, $\mathcal{F}_g(\mathcal{X})$ measures the length of the shortest minimal filling pair on the hyperbolic surface $\mathcal{X}$.

For orientable surfaces, Aougab--Huang \cite{MR3342680} proved that the analogous length function is proper and a topological Morse function. We show that the similar phenomenon persists in the non-orientable surfaces. Furthermore, we determine the minimum value $m_g$ of $\mathcal{F}_g$.
Now we define the set,
$
\mathcal{B}_g=
\left\{
\mathcal{X}\in\mathcal{M}_g:
\mathcal{F}_g(\mathcal{X})
=
m_g
\right\}.
$
This naturally raises the following question.

\begin{question}
What is the cardinality of $\mathcal{B}_g$?
\end{question}
To answer this question, we show that this set is finite. Moreover, its cardinality is equal to $\mathcal{N}(g)$. So, its cardinality also grows super-exponentially with $g$.

Motivated by the work of Sanki--Vadnere \cite{MR4278333} and Gaster \cite{MR4278331}, we introduce a generalized version of $\mathcal{F}_g$, namely
\[
\mathcal{Y}_g:\mathcal{M}_g\rightarrow\mathbb{R},\qquad
\mathcal{Y}_g(\mathcal{X})
=
\min_{(\alpha,\beta)\in\mathcal{N}'(g)}
\ell_{\mathcal{X}}(\alpha,\beta),
\]
where $\mathcal{N}'(g)$ is the mapping class group orbits of all filling pairs in $N_g$
We prove that $\mathcal{Y}_g$ has the same minimum value as $\mathcal{F}_g$, namely $m_g$.
\section{Preliminaries}

In this section, we recall the background material and fix the notation used in the subsequent sections. We briefly review the theory of fat graphs, non-orientable surfaces, filling pairs and the action of the mapping class groups on the set of filling pairs on non-orientable surfaces, which form the combinatorial framework for our construction. Furthermore, we summarize the necessary facts from hyperbolic geometry and Delaney--Dress symbols that is used to study the length function on the moduli space $\mathcal{M}_g$ of $N_g$.

\subsection{Fat Graphs}
There are several equivalent definitions of graphs, but we give the following definition, which is convenient for defining the fat graphs. 
\begin{definition}\cite[section 2]{MR3881044}
    A \emph{graph} $G$ is a triple $(E,\sim,\sigma_1)$, where
    \begin{enumerate}
        \item $E$ is a finite non-empty set.
        \item $\sim$ is an equivalence relation on $E$.
        \item $\sigma_1:E\rightarrow E$ is a fixed point free involution.
        
    \end{enumerate}

\end{definition}
The set $E$ is called the set of directed edges. The fixed point free involution $\sigma_1$ maps a directed edge to its reverse edge, i.e. 
\[
\sigma_1(\vec{e})=\cev{e}
\]
Hence, the set $E_1=E/\sigma_1$ of orbits of $\sigma_1$ is the set of all undirected edges and the equivalence relation $\sim$ is defined by the following:
\[
\vec{e}_1\sim\vec{e}_2 \iff \vec{e}_1\text{ and }\vec{e}_2\;\; \text{ have the  same initial vertex.}
\]

The set $V=E/\sim$ is the set of all equivalence classes. Each equivalence class is a vertex of the graph. The degree of a vertex is the number of undirected edges in that equivalence class. A graph is called $k$-valent if all the vertices in the graph are of degree $k$.
\begin{definition}
    A \emph{fat graph} is a quadruple $G=(E,\sim,\sigma_1,\sigma_0)$ where 
    \begin{enumerate}
        \item $(E,\sim,\sigma_1)$ is a graph.
        \item $\sigma_0$ is a permutation on $E$ so that each cycle corresponds to a cyclic order on the set of oriented edges going out from a vertex.
    \end{enumerate} 
\end{definition}
Now, we construct a topological surface corresponding to a fat graph $G$. We consider a closed disk corresponding to each vertex and a rectangle to each edge. Next, we identify the two opposite sides of the rectangle with the boundary of the disk according to the order of the edges incident to a vertex. The resulting object is an orientable surface, denoted by $\Sigma(G)$. As a result, each edge of fat graph corresponds to two arcs in the boundary of the surface. 
\begin{figure}[htbp]
    \centering
     
    \tikzset{every picture/.style={line width=0.75pt}} 

\begin{tikzpicture}[,x=0.75pt,y=0.75pt,yscale=-1,xscale=1]

\draw [color={rgb, 255:red, 208; green, 2; blue, 27 }  ,draw opacity=1 ]   (195.54,51.87) -- (194.66,134.41) ;
\draw    (156,94.93) -- (238.6,94.93) ;
\draw    (208.72,111.98) .. controls (227.86,64.02) and (161.74,71.45) .. (185.14,111.65) ;
\draw [shift={(185.88,112.88)}, rotate = 238.31] [color={rgb, 255:red, 0; green, 0; blue, 0 }  ][line width=0.75]    (10.93,-3.29) .. controls (6.95,-1.4) and (3.31,-0.3) .. (0,0) .. controls (3.31,0.3) and (6.95,1.4) .. (10.93,3.29)   ;
\draw   (355.92,94.27) .. controls (355.92,86.47) and (362.11,80.14) .. (369.75,80.14) .. controls (377.39,80.14) and (383.58,86.47) .. (383.58,94.27) .. controls (383.58,102.07) and (377.39,108.39) .. (369.75,108.39) .. controls (362.11,108.39) and (355.92,102.07) .. (355.92,94.27) -- cycle ;
\draw    (382.7,88.22) -- (424,87.76) ;
\draw    (382.7,100.78) -- (424,100.32) ;
\draw    (315.04,89.12) -- (356.34,88.65) ;
\draw    (315.04,101.68) -- (356.34,101.21) ;
\draw [color={rgb, 255:red, 208; green, 2; blue, 27 }  ,draw opacity=1 ]   (365.13,81.48) -- (365.13,46.48) ;
\draw [color={rgb, 255:red, 208; green, 2; blue, 27 }  ,draw opacity=1 ]   (375.67,82.37) -- (375.67,47.38) ;
\draw [color={rgb, 255:red, 208; green, 2; blue, 27 }  ,draw opacity=1 ]   (365.13,141.59) -- (365.13,106.6) ;
\draw [color={rgb, 255:red, 208; green, 2; blue, 27 }  ,draw opacity=1 ]   (375.67,142.48) -- (375.67,107.49) ;
\draw    (374.79,103.01) .. controls (390.29,80.15) and (350.94,77.97) .. (363.42,100.69) ;
\draw [shift={(364.25,102.11)}, rotate = 238.34] [color={rgb, 255:red, 0; green, 0; blue, 0 }  ][line width=0.75]    (10.93,-3.29) .. controls (6.95,-1.4) and (3.31,-0.3) .. (0,0) .. controls (3.31,0.3) and (6.95,1.4) .. (10.93,3.29)   ;

\end{tikzpicture}

    \caption{Local picture of a fat graph and its corresponding surface}
    \label{fig:placeholder}
\end{figure}

\subsection{Non-orientable surfaces }
A surface is a two-dimensional manifold. By a \emph{closed surface}, we mean a compact connected surface without boundary. A surface is said to be \emph{non-orientable} if it contains an embedded M\"obius strip; otherwise, it is called \emph{orientable}.

The classification theorem for closed surfaces (see \cite[Proposition 6.2.7]{martelli2016introduction}) says that every orientable closed surface is homeomorphic to the connected sum of $g$ many tori, denoted by $S_g$, whereas every non-orientable closed surface is homeomorphic to the connected sum of $g$ copies of the real projective planes, denoted by $N_g$. Equivalently, $N_g$ can be obtained from the $2$-sphere by attaching $g$ crosscaps, where attaching a crosscap means removing an open disk and identifying antipodal points on the resulting boundary circle. The non-negative integer $g$ is called the \emph{genus} of the surface. The Euler characteristics of these surfaces are given by, 
\[
\chi(S_g)=2-2g \text{  and  }
\chi(N_g)=2-g.
\]

\subsection{Filling systems}

A simple closed curve on a surface $S$ is an embedding of a circle $\mathbb{S}^1$ into $S$. A simple closed curve in a closed surface $S$ is called essential if it does not bound a disk or a M\"obius strip.  For two closed curves $\alpha$ and $\beta$ on a surface $S$, the \emph{geometric
intersection number} of the pair $(\alpha,\beta)$ is defined by
\[
i(\alpha,\beta)=\min\{|\alpha'\cap\beta'|:\alpha'\simeq\alpha,\,
\beta'\simeq\beta\},
\]
where $\simeq$ is the free homotopy relation. The curves $\alpha$ and $\beta$ are said to be
in \emph{minimal position} if
$
i(\alpha,\beta)=|\alpha\cap\beta|.
$

\begin{definition}
A collection $\Gamma=\{\gamma_1,\gamma_2,\ldots,\gamma_k\}$ of homotopically distinct essential simple closed curves
on a surface $S$ is called a \emph{filling system} if the curves are pairwise in minimal position and every connected component of $S\setminus\Gamma$ is a topological disk.
\end{definition}
 Let
$
\Gamma=\{\gamma_1,\gamma_2,\ldots,\gamma_k\}
$
be a filling system on a surface \(S\). The set of intersection points of the curves in \(\Gamma\) is defined by
\[
V(\Gamma)
=
\bigcup_{1\le i<j\le k}
(\gamma_i\cap\gamma_j).
\]
A filling system \(\Gamma\) is said to be \emph{minimally intersecting} if \(|V(\Gamma)|\) is minimum among all filling systems on \(S\).

A filling system consisting of two curves is called a \emph{filling pair}. Let \(\Gamma=\{\alpha,\beta\}\) be a filling pair. Since the curves are assumed to be in minimal position, we have
$
|V(\Gamma)|=i(\alpha,\beta),
$
By the Euler characteristic formula,
\[
i(\alpha,\beta)\ge
\begin{cases}
2g-1, & \text{if } S=S_g,\\[2mm]
g-1, & \text{if } S=N_g.
\end{cases}
\]
By Lemma~\ref{minimally intersecting} and \cite[Lemma 2.1]{MR3342680}, equality holds if and only if the complement of the filling pair is a single topological disk. Such filling pairs are called \emph{minimal filling pairs}. For example, in Figure~\ref{filling pair on N2}, $\{\alpha,\beta\}$ is a minimal filling pair of $N_2$

\begin{figure}[htbp]
    \centering

\tikzset{every picture/.style={line width=0.75pt}} 

\begin{tikzpicture}[x=0.75pt,y=0.75pt,yscale=-1,xscale=1]

\draw   (102,90.46) .. controls (102,63.4) and (151.25,41.46) .. (212,41.46) .. controls (272.75,41.46) and (322,63.4) .. (322,90.46) .. controls (322,117.52) and (272.75,139.46) .. (212,139.46) .. controls (151.25,139.46) and (102,117.52) .. (102,90.46) -- cycle ;
\draw   (151,92) .. controls (151,84.27) and (157.27,78) .. (165,78) .. controls (172.73,78) and (179,84.27) .. (179,92) .. controls (179,99.73) and (172.73,106) .. (165,106) .. controls (157.27,106) and (151,99.73) .. (151,92) -- cycle ;
\draw   (148.23,95.69) -- (151.04,89.73) -- (153.69,95.77) ;
\draw   (180.56,88.6) -- (178.52,94.44) -- (175.82,88.87) ;
\draw   (244,92) .. controls (244,84.27) and (250.27,78) .. (258,78) .. controls (265.73,78) and (272,84.27) .. (272,92) .. controls (272,99.73) and (265.73,106) .. (258,106) .. controls (250.27,106) and (244,99.73) .. (244,92) -- cycle ;
\draw   (241.23,95.69) -- (244.04,89.73) -- (246.69,95.77) ;
\draw   (273.56,88.6) -- (271.52,94.44) -- (268.82,88.87) ;
\draw    (165,78) .. controls (189,52.46) and (244,60.46) .. (258,78) ;
\draw    (165,106) .. controls (191,123.46) and (234,123.46) .. (258,106) ;
\draw [color={rgb, 255:red, 208; green, 2; blue, 27 }  ,draw opacity=1 ]   (151,92) .. controls (100,51.46) and (227,53.46) .. (179,92) ;
\draw   (157,59) -- (164,62.23) -- (157,65.46) ;
\draw   (206,59) -- (213,62.23) -- (206,65.46) ;
\draw   (219,122.46) -- (212,119.23) -- (219,116) ;
\draw [color={rgb, 255:red, 208; green, 2; blue, 27 }  ,draw opacity=1 ]   (438.54,47) -- (438.54,128.46) ;
\draw    (438.54,47) -- (520,47) ;
\draw    (438.54,128.46) -- (520,128.46) ;
\draw [color={rgb, 255:red, 208; green, 2; blue, 27 }  ,draw opacity=1 ]   (520,47) -- (520,128.46) ;
\draw   (478,44) -- (485,47.23) -- (478,50.46) ;
\draw   (482.96,131.5) -- (476,128.19) -- (483.04,125.04) ;
\draw   (516.15,91.62) -- (519.62,84.73) -- (522.61,91.84) ;
\draw   (435.25,91.71) -- (438.52,84.73) -- (441.71,91.75) ;

\draw (124,66) node [anchor=north west][inner sep=0.75pt]   [align=left] {$\displaystyle \alpha $};
\draw (234,45) node [anchor=north west][inner sep=0.75pt]   [align=left] {$\displaystyle \beta $};
\draw (476,21) node [anchor=north west][inner sep=0.75pt]   [align=left] {$\displaystyle \beta $};
\draw (474,102) node [anchor=north west][inner sep=0.75pt]   [align=left] {$\displaystyle \beta $};
\draw (419,80) node [anchor=north west][inner sep=0.75pt]   [align=left] {$\displaystyle \alpha $};
\draw (528,77) node [anchor=north west][inner sep=0.75pt]   [align=left] {$\displaystyle \alpha $};
\draw (189,151) node [anchor=north west][inner sep=0.75pt]   [align=left] {(a)};
\draw (469,150) node [anchor=north west][inner sep=0.75pt]   [align=left] {(b)};

\end{tikzpicture}

    \caption{(a) a filling pair $(\alpha,\beta)$ in $N_2$, (b) $N_2-\{\alpha\cup\beta\}$}
    \label{filling pair on N2}
\end{figure}
\subsection{Mapping class groups and their action on filling pairs} In this section, we turn our attention to the mapping class groups of non-orientable surfaces $N_g$. The set of all self-homeomorphisms of $N_g$ forms a group under composition of mappings, denoted by $\mathrm{Hom}(N_g).$ We have an equivalence relation $\sim$ on $\mathrm{Hom}(N_g)$: For $f, g\in \mathrm{Hom}(N_g)$, we have $f\sim g$ if $f$ and $g$ are isotopic. Then the set of isotopy classes of $f\in \mathrm{Hom}(N_g)$ denoted by $\mathrm{Mod}(N_g)$, forms a group under the following binary operation: For the equivalence classes $[f]$ and $[g]$ of $f$ and $g$ in $\mathrm{Hom}(N_g)$, we have $[f]\cdot[g]=[fg]$. The group $\mathrm{Mod}(N_g)$ is called the mapping class group of $N_g$, for more details see~\cite{paris2014mapping}. For example, $g=2$, $\mathrm{Mod}(N_2)$ is Klein-$4$ group. This contrasts with the mapping class group of orientable surfaces, where every mapping class group is infinite.

Now, we denote the set of all filling pairs on $N_g$ by $\mathscr A_g$. The mapping class group $ \mathrm{Mod}(N_g)$ acts on $\mathscr A_g$ by the following
\[ [\phi]\cdot (\alpha,\beta)=(\phi\circ\ap, \phi\circ\beta)\] where $[\phi]\in \mathrm{Mod}(N_g)$ and $(\ap,\beta)\in \mathscr A_g.$ We will count the orbit of $\mathrm{Mod}(N_g)$ with respect to this action. 

\subsection{Hyperbolic structures and moduli spaces}

In this subsection, we briefly recall some standard facts concerning
hyperbolic structures on non-orientable surfaces.

The natural analogue of Riemann surfaces in the non-orientable setting is the notion of a Klein surface.
A topological surface equipped with an atlas, whose transition maps are
either holomorphic or anti-holomorphic, is called a \emph{Klein surface}. Such an atlas is called a dianalytic structure on the surface.

 A \emph{non-Euclidean crystallographic} (NEC) group is a discrete subgroup of the isometry group $\mathrm{Isom}(\mathbb{H}^{2})$ of the hyperbolic plane,  possibly containing orientation-reversing isometries. The uniformization theorem for Klein surfaces says that every compact Klein surface with negative Euler characteristic admits a hyperbolic structure.
In particular, if $N_g$ is a closed non-orientable surface of genus
$g\geq 3$, then there exists a torsion-free cocompact NEC group
$\Gamma$ such that the hyperbolic surface $\mathbb{H}^2/\Gamma$ is homeomorphic to $N_g$.

The set of all
complete hyperbolic structures on $N_g$ upto isometries is called the \emph{Moduli space} of $N_g$ and is denoted by $\mathcal{M}_g$. Equivalently, a
point of $\mathcal{M}_g$ represents an isometry class of hyperbolic
surfaces homeomorphic to $N_g$.
Let $\gamma$ be a closed curve in $N_g$. For $X\in\mathcal{M}_g$,
there exists a unique closed geodesic representative $\gamma*$ in the free homotopy class of $\gamma$.
By $l_X(\gamma)$, we mean the length of $\gamma*$ with respect to the hyperbolic structure $X$.We recall the following standard version of the collar lemma, which
guarantees an embedded collar around every simple closed geodesic, with
the collar width determined by its length.
\begin{lemma}[\textbf{Collar lemma}]\label{collar lemma}
        Let $X$ be a complete hyperbolic non-orientable surface. Now consider a simple closed curve $\gamma$ in $X.$ Define
\[
\omega_\gamma
=
\operatorname{arcsinh}
\left(
\frac{1}{\sinh\left(\frac{l_X(\gamma)}{2}\right)}
\right).
\]Then one of the following holds:
        \begin{enumerate}
            \item If $\gamma$ is two-sided, then it has a tubular neighborhood with width of $\omega_\gamma$.
            \item If $\gamma$ is one-sided, then it has a M\"obius band with width of $\omega_\gamma$.
        \end{enumerate}

\end{lemma}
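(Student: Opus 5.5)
The plan is to derive the statement by lifting to the orientation double cover and applying the classical collar lemma for orientable hyperbolic surfaces (Buser's form). Let $X=\mathbb{H}^2/\Gamma$ with $\Gamma$ a torsion-free NEC group, and let $\Gamma^+\subset\Gamma$ be the index-two subgroup of orientation-preserving elements. Then $\tilde X=\mathbb{H}^2/\Gamma^+$ is an orientable complete hyperbolic surface, and the covering $\pi:\tilde X\to X$ is a local isometry. Its deck group is generated by a fixed-point-free orientation-reversing isometric involution $\iota$. I may take $\gamma$ to be the simple closed geodesic in its class, since lengths and widths refer to the geodesic representative.

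\textbf{Two-sided case.} If $\gamma$ is two-sided, it is orientation-preserving, so its holonomy lies in $\Gamma^+$. Hence $\pi^{-1}(\gamma)$ consists of two disjoint simple closed geodesics $\tilde\gamma_1,\tilde\gamma_2=\iota(\tilde\gamma_1)$, each of length $l_X(\gamma)$.

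The orientable collar lemma gives disjoint embedded collars $C(\tilde\gamma_i)$ of width $\omega_\gamma$ about each $\tilde\gamma_i$. Collars about disjoint simple closed geodesics in a pants decomposition are pairwise disjoint, and I complete $\{\tilde\gamma_1,\tilde\gamma_2\}$ to such a decomposition. The collars are defined metrically as the set of points at distance $<\omega_\gamma$ from the geodesic, so $\iota$ swaps them. Consequently $\pi$ restricted to $C(\tilde\gamma_1)$ is injective, and its image is an embedded annulus of width $\omega_\gamma$ about $\gamma$.

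\textbf{One-sided case.} If $\gamma$ is one-sided, its holonomy $h$ is a glide reflection with translation length $l=l_X(\gamma)$. Then $h^2\in\Gamma^+$ is hyperbolic with translation length $2l$, and $\pi^{-1}(\gamma)$ is a single simple closed geodesic $\tilde\gamma$ of length $2l$. Note that $\tilde\gamma$ is simple because $\gamma$ is simple, since $\pi^{-1}$ of an embedded curve is embedded. The involution $\iota$ preserves $\tilde\gamma$, acting on it as a half-translation.

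Applying the orientable collar lemma to $\tilde\gamma$ yields an embedded collar of width $\operatorname{arcsinh}(1/\sinh l)$. This collar is $\iota$-invariant, and its quotient is a M\"obius band. However, this width is smaller than the claimed $\omega_\gamma=\operatorname{arcsinh}(1/\sinh(l/2))$.

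\textbf{Main obstacle.} The one-sided case is where the real work lies, since the double-cover argument alone loses the factor of two. I would therefore redo Buser's argument directly in $X$. Consider the set of points $p$ with $d(p,\gamma)<\omega$ at which the normal-exponential map from $\gamma$ fails to be injective. A minimal failure produces a geodesic arc of length $2\omega$ from $\gamma$ to itself, meeting $\gamma$ perpendicularly at both ends. In the one-sided setting, such an arc together with a subarc of $\gamma$ forms a right-angled geodesic polygon. Either it is a trirectangle, giving the usual pentagon/quadrilateral trigonometric inequality, or the arc returns to the opposite side of the same point of $\gamma$.

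The hyperbolic trigonometry of the resulting right-angled hexagon or pentagon then gives $\sinh(\omega)\sinh(l/2)\ge 1$, matching the stated width. If this sharper bound fails for some configurations, I would fall back on stating the lemma with width $\operatorname{arcsinh}(1/\sinh l)$ in the one-sided case. That weaker width is all the double-cover argument guarantees, and it should suffice for later compactness arguments.
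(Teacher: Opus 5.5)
Your two-sided case is essentially the paper's argument: take the two disjoint lifts of length $l_X(\gamma)$ in the orientation double cover, use Buser's collars, and note that the deck involution swaps the disjoint collars. That part is fine. In the one-sided case you correctly spot the step the paper's proof skips. The paper applies Buser's lemma to the single lift $\tilde\gamma$ and asserts width $\omega_\gamma$. But $\tilde\gamma$ has length $2l_X(\gamma)$, so the cover only yields width $\operatorname{arcsinh}(1/\sinh l_X(\gamma))$. Neither the paper's proof nor your first step therefore establishes (2).

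\textbf{The gap.} The repair you sketch does not work as stated. Let $\eta$ be a shortest orthogeodesic from $\gamma$ to itself. Then $\eta$ together with a subarc of $\gamma$ is an essential loop with two right-angled corners. In $\mathbb{H}^2$ the lift of $\eta$ is the common perpendicular of two distinct lifts $A\neq gA$ of $\gamma$. A right-angled bigon is ruled out by Gauss--Bonnet. So there is no trirectangle, pentagon or hexagon built from $\gamma\cup\eta$ alone whose trigonometry bounds $|\eta|$. The bound has to come from the topology of a neighbourhood of $\gamma\cup\eta$ and the lengths of its other boundary curves.

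\textbf{One way to close it.} Put $l=l_X(\gamma)$ and cut $X$ along $\gamma$, obtaining a boundary geodesic $\hat\gamma$ of length $2l$. The M\"obius band of width $w$ embeds iff every orthogeodesic from $\hat\gamma$ to itself has length at least $2w$. For a shortest one, $\eta$, a regular neighbourhood of $\hat\gamma\cup\eta$ is a pair of pants if the band along $\eta$ is untwisted, and a two-holed projective plane if it is twisted.

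In the untwisted case, tighten the other two boundary curves to geodesics, cusps or doubled one-sided geodesics, of lengths $\ell_1,\ell_2\ge 0$. The pentagon and hexagon formulas then give
\[
\sinh^2\frac{|\eta|}{2}=\frac{\cosh^2\frac{\ell_1}{2}+2\cosh l\,\cosh\frac{\ell_1}{2}\cosh\frac{\ell_2}{2}+\cosh^2\frac{\ell_2}{2}}{\sinh^2 l}\ \ge\ \frac{2(1+\cosh l)}{\sinh^2 l}=\frac{1}{\sinh^2(l/2)}.
\]
In the twisted case, $\eta$ crosses the core one-sided geodesic $\mu$ (of length $m$) exactly once. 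A trirectangle computation gives $\sinh\frac{|\eta|}{2}\ge\frac{\cosh l+\cosh m}{2\sinh l\,\sinh(m/2)}\ge\frac{1}{\sinh(l/2)}$.

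So the width $\omega_\gamma$ is in fact correct, contrary to your hedge. It is also sharp: equality holds when $\hat\gamma$ bounds a pair of pants with two cusps. Your fallback to width $\operatorname{arcsinh}(1/\sinh l)$ would prove only a weaker statement. That weaker version does suffice for Lemma~\ref{proper}, which uses only $\omega_\gamma\to\infty$ as $l_X(\gamma)\to 0$, but it is not the lemma as stated.
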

\begin{proof}
First we take the two-sheeted orientable cover of $X$, i.e.,  $\widetilde{X}.$ From, \cite{sayantankhan}, we know that if $\gamma\in X$ is two sided then it can be lifted into two disjoint copies of the same length in $\widetilde{X}$, also if $\gamma\in X$ is one-sided then it can be lifted into a single closed geodesic of the twice of the length of $\gamma.$ Therefore by \cite{MR2850125}, we know that there is a tubular neighborhood of a lift of $\gamma$ in $\widetilde X$ the case of orientable of width $\omega_\gamma.$ That means, the image of this tubular neighborhood in $\widetilde X$ gives a tubular neighborhood of $\gamma$ in $X$ of width $\omega_\gamma$, if $\gamma$ is two-sided, otherwise gives a M\"obius band of width $\omega_\gamma$ which has $\gamma$ as center loop. These give the assertation.
\end{proof}

\subsection{Delaney-Dress symbols}

In this section, we briefly recall the notions of equivariant tilings, chamber systems, and Delaney--Dress symbols. For a detailed exposition, we refer the reader to \cite{huson1993generation}.

Let $\mathcal{T}$ be a tiling of the hyperbolic plane $\mathbb{H}^{2}$ and let $\Gamma$ be a NEC group acting on $\mathbb{H}^{2}$. The pair $(\mathcal{T},\Gamma)$ is called an \emph{equivariant tiling} of $\mathbb{H}^{2}$ if
$
\mathcal{T}=\gamma\mathcal{T}:=\{\gamma(A)\mid A\in\mathcal{T}\}
$
for every $\gamma\in\Gamma$.

Two equivariant tilings $(\mathcal{T},\Gamma)$ and $(\mathcal{T}',\Gamma')$ are said to be \emph{equivariantly equivalent} if there exists a isometry
$
\phi:\mathbb{H}^{2}\longrightarrow \mathbb{H}^{2}
$
such that
$
\phi(\mathcal{T})=\mathcal{T}'
$
and
$
\Gamma'=\phi\Gamma\phi^{-1}.
$

We now describe the chamber system associated with an equivariant tiling. For each vertex of $\mathcal{T}$, choose a distinguished point called a \emph{$0$-center}; for each edge, choose a \emph{$1$-center}; and for each tile, choose a \emph{$2$-center}. Let $A\in\mathcal{T}$ be a tile. Inside $A$, join the $2$-center to each incident $0$-center and $1$-center by pairwise non-intersecting arcs. Repeating this construction for every tile of $\mathcal{T}$ yields a triangulation of $\mathcal{T}$.

The resulting triangles are called \emph{chambers}. Each chamber contains
exactly one $0$-center, one $1$-center, and one $2$-center as its vertices.
The collection of all chambers is denoted by $\mathcal{C}_{\mathcal{T}}$ and
is called the \emph{chamber system} associated with the tiling
$\mathcal{T}$.
The chamber system can be constructed in a way compatible with the action of
$\Gamma$. Let
$
\mathcal{D}:=\mathcal{C}_{\mathcal{T}}/\Gamma
$
be the set of $\Gamma$-orbits of chambers.

For a chamber $T\in\mathcal{C}_{\mathcal{T}}$ and $i\in\{0,1,2\}$, the edge
opposite to the $i$-center is called the \emph{$i$-edge} of $T$. We define
a map
$
\sigma_i:\mathcal{C}_{\mathcal{T}}\longrightarrow
\mathcal{C}_{\mathcal{T}}
$
by letting $\sigma_i(T)$ be the chamber adjacent to $T$ across its
$i$-edge. The compatibility of the chamber system with the action of
$\Gamma$ implies that
$
\sigma_i(\gamma T)=\gamma\sigma_i(T)
$
for every $T\in\mathcal{C}_{\mathcal{T}}$ and every $\gamma\in\Gamma$.
Hence $\sigma_i$ induces a map
\[
\tilde{\sigma}_i:\mathcal{D}\longrightarrow\mathcal{D}.
\]

We define
\[
\mathcal{E}
=
\left\{
(\{D,D'\},i)
\,\middle|\,
D,D'\in\mathcal{D},
\ \tilde{\sigma}_i(D)=D'
\right\}.
\]

For $0\le i<j\le 2$, define a function
\[
m_{ij}:\mathcal{D}\longrightarrow\mathbb{N}
\]
by
\[
m_{ij}(D)
=
\min
\left\{
m\in\mathbb{N}
\;\middle|\;
C(\sigma_i\sigma_j)^m=C
\text{ for all } C\in D
\right\}.
\]

\begin{definition}
A system
\[
(\mathcal{D};m)
:=
\bigl((\mathcal{D},\mathcal{E});
m_{01},m_{02},m_{12}\bigr)
\]
is called a \emph{Delaney--Dress symbol} if for every
$D\in\mathcal{D}$ and every $0\le i<j\le 2$ the following conditions
hold:
    \begin{enumerate}
        \item $m_{ij}(D)=m_{ij}(D\sigma_i)=m_{ij}(D\sigma_j)$
        \item $D(\sigma_i\sigma_j)^{m_{ij}(D)}=D(\sigma_i\sigma_j)^{m_{ij}(D)}=D$
        \item $m_{02}(D)=2$
        \item $m_{01}\geq2$
        \item $m_{12}(D)\geq3$
    \end{enumerate}
\end{definition}
 Two Delaney--Dress symbols $(\mathcal{D};m)$ and
$(\mathcal{D}';m')$ are said to be \emph{isomorphic} if there exists a
bijection
$
\pi:\mathcal{D}\longrightarrow\mathcal{D}'
$
such that
$
\pi(D\tilde{\sigma}_k)
=
\pi(D)\tilde{\sigma}'_k
$
and
$
m'_{ij}(\pi(D))
=
m_{ij}(D)
$
for every $D\in\mathcal{D}$, $0\leq k\leq 2$, and
$0\leq i<j\leq 2$.

We recall the following lemma due to Huson, which connects the equivarient tilings with the Delaney--Dress symbols. 

\begin{lemma}[{\cite[Lemma~1.1]{huson1993generation}}] \label{dd symbol}
Two equivariant tilings $(\mathcal{T},\Gamma)$ and
$(\mathcal{T}',\Gamma')$ are equivariantly equivalent if and only if
their corresponding Delaney--Dress symbols
$(\mathcal{D};m)$ and $(\mathcal{D}';m')$ are isomorphic.
\end{lemma}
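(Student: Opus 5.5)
The statement is Huson's Lemma 1.1, relating equivariant tilings to Delaney--Dress symbols; the paper only cites it. I would prove the two directions separately, working throughout with the chamber system $\mathcal{C}_{\mathcal{T}}$ and the maps $\sigma_0,\sigma_1,\sigma_2$.

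\emph{Forward direction.} Suppose $\phi$ is an isometry with $\phi(\mathcal{T})=\mathcal{T}'$ and $\Gamma'=\phi\Gamma\phi^{-1}$. First I choose the $0$-, $1$- and $2$-centers of $\mathcal{T}'$ to be the $\phi$-images of those of $\mathcal{T}$. Then $\phi$ carries chambers to chambers and commutes with adjacency: $\phi(\sigma_i T)=\sigma_i'\phi(T)$. Since $\phi(\gamma T)=(\phi\gamma\phi^{-1})\phi(T)$, the map $\phi$ sends $\Gamma$-orbits to $\Gamma'$-orbits. It therefore induces a bijection $\pi:\mathcal{D}\to\mathcal{D}'$ satisfying $\pi(D\tilde\sigma_k)=\pi(D)\tilde\sigma_k'$. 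The numbers $m_{ij}$ are defined purely from the $\sigma$-dynamics on chambers, so they are preserved. Independence of the choice of centers follows because any two choices give isotopic, equivariantly equivalent chamber systems.

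\emph{Reverse direction.} Suppose $\pi:\mathcal{D}\to\mathcal{D}'$ is an isomorphism of symbols. Fix a chamber $T_0\in D_0$ and a chamber $T_0'\in\pi(D_0)$. I define a map $\Phi:\mathcal{C}_{\mathcal{T}}\to\mathcal{C}_{\mathcal{T}'}$ on flags by
\[
\Phi(T_0\sigma_{i_1}\cdots\sigma_{i_k})=T_0'\sigma'_{i_1}\cdots\sigma'_{i_k}.
\]
Every chamber is reached this way, since the chamber system is connected, being a triangulation of $\mathbb{H}^2$. The main obstacle is that $\Phi$ must be well defined, and this is where simple connectivity of $\mathbb{H}^2$ enters. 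The relations among the $\sigma_i$ acting on $\mathcal{C}_{\mathcal{T}}$ are generated by $\sigma_i^2=1$ together with the local relations $(\sigma_i\sigma_j)^{m_{ij}(T)}=1$ around each vertex of the triangulation. Loops in the dual graph of the triangulation are null-homotopic, hence products of these vertex loops. An induction along words shows that each such relation holds at $T$ exactly when it holds at $\Phi(T)$, because $\pi$ preserves the orbit data and the $m_{ij}$. Hence $\Phi$ is well defined and bijective.

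Next I check equivariance. For $\gamma\in\Gamma$, the chamber $\gamma T_0$ lies in the same orbit $D_0$. I map it by the word $w$ with $T_0w=\gamma T_0$, and define $\gamma':=$ the unique element of $\Gamma'$ with $\gamma' T_0'=\Phi(\gamma T_0)$. This element exists because $\Phi(\gamma T_0)\in\pi(D_0)$, and it is unique because the action on chambers is free: as a group of symmetries, $\Gamma$ acts freely on flags. Then $\gamma\mapsto\gamma'$ is an isomorphism with $\Phi\gamma=\gamma'\Phi$.

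Finally I realize $\Phi$ geometrically. I map each chamber triangle to the corresponding one, matching $0$-, $1$- and $2$-centers, and do this equivariantly to obtain a homeomorphism $\phi$ with $\phi(\mathcal{T})=\mathcal{T}'$ and $\phi\Gamma\phi^{-1}=\Gamma'$. I expect one subtle point here: equivalence in the source is combinatorial, so ``isometry'' must be read up to the deformation allowed in Huson's setting. If genuine isometry is needed, I would deform one tiling equivariantly, using that the symbol determines the orbifold $\mathbb{H}^2/\Gamma$ with its cone and corner data.
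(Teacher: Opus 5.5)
The paper gives no proof of this lemma; it only cites Huson. Your outline is the standard Dress--Huson argument behind that citation:
\begin{itemize}
\item an equivariant map induces a symbol isomorphism;
\item conversely, you define $\Phi$ on galleries from a base chamber, use simple connectivity of $\mathbb{H}^2$ for well-definedness, use freeness of $\Gamma$ on chambers to produce $\gamma\mapsto\gamma'$, and realize $\Phi$ cell by cell.
\end{itemize}
When equivariant equivalence means an equivariant \emph{homeomorphism}, which is Huson's definition, this is correct.

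One phrasing point needs tightening. The well-definedness induction should be run on words, not on $\Phi$, which is not yet defined at that stage. By induction on $w$ one has $\Gamma'(T_0'w')=\pi(\Gamma T_0w)$, hence $m'_{ij}(T_0'w')=m_{ij}(T_0w)$. So every elementary move used to contract a closed gallery at $T_0$ (cancelling $\sigma_i\sigma_i$, or a $2$-cell boundary $(\sigma_i\sigma_j)^{m_{ij}}$) is also legal for the corresponding gallery at $T_0'$.

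The step that fails is your final fallback. If $\phi$ must be an isometry, as the paper literally writes, the `if' direction is false. Deforming one tiling equivariantly cannot help, because it replaces the given pair $(\mathcal{T}',\Gamma')$ by a different one. Here is a counterexample:
\begin{itemize}
\item take non-isometric $X=\mathbb{H}^2/\Gamma$ and $X'=\mathbb{H}^2/\Gamma'$ in $\mathcal{M}_g$, $g\ge 3$, and a homeomorphism $h\colon X\to X'$;
\item take a minimal filling pair on $X$ and its $h$-image on $X'$, and let $\mathcal{T},\mathcal{T}'$ be the lifted polygon tilings.
\end{itemize}
A lift of $h$ is an equivariant homeomorphism, so by your own forward argument the symbols are isomorphic. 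Yet an isometry $\phi$ with $\phi\Gamma\phi^{-1}=\Gamma'$ and $\phi(\mathcal{T})=\mathcal{T}'$ would descend to an isometry $X\to X'$. The symbol records only combinatorics and the orders $m_{ij}$, never lengths or moduli.

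So the correct resolution is to change the definition (homeomorphism instead of isometry), not to add a deformation argument. That version is what Huson proves. It is also all the paper later uses, since Lemma~\ref{dd symbol lemma} only extracts a homeomorphism of $\mathbb{H}^2$.
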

\section{Construction of minimal filling pair}\label{section 3}
In this section, we prove Theorem~\ref{construction theorem}. We begin with proving that the geometric intersection number of any filling pair on $N_g$ is at least $g-1$ with equality if and only if it is minimally intersecting. Next, we develop the fat graph construction with suitable edge twists to construct minimal filling pairs on $N_g$ for every $g>1$. We conclude this section by showing that for $g=1$, $N_g$ admits no minimal filling pair.

\begin{lemma}\label{minimally intersecting}
    If $(\alpha,\beta)$ is a filling pair of $N_g$ then $i(\alpha,\beta)\geq g-1$. Furthermore, equality holds if and only if $N_g\setminus\alpha\cup\beta$ is connected.
\end{lemma}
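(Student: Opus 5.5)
We argue with an Euler characteristic count on the cell structure that the filling pair induces on $N_g$. Let $(\alpha,\beta)$ be a filling pair in minimal position, and set $n=i(\alpha,\beta)=|\alpha\cap\beta|$. We may assume $n\ge 1$: two disjoint simple closed curves cannot fill a closed surface, since the complement of $\alpha\cup\beta$ then contains an annulus or M\"obius band neighbourhood of a side of $\alpha$, which is not part of a disk unless the surface is degenerate. We may also assume transversality, after a small isotopy that does not change the intersection count.

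\textbf{Step 1: the cell structure.} Form the graph $G=\alpha\cup\beta\subset N_g$. Its vertices are the $n$ intersection points, and each of them has valence $4$. Its edges are the $n$ arcs into which $\alpha$ is cut and the $n$ arcs into which $\beta$ is cut, so $E=2n$. By the filling hypothesis, every component of $N_g\setminus G$ is an open disk. Let $F\ge 1$ denote the number of these faces. Then $G$ together with the faces is a CW decomposition of $N_g$, and
\[
2-g=\chi(N_g)=V-E+F=n-2n+F=F-n .
\]

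\textbf{Step 2: the inequality.} Step 1 gives $n=F+g-2$. Since $F\ge 1$, we get $i(\alpha,\beta)=n\ge g-1$.

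\textbf{Step 3: the equality case.} From $n=F+g-2$, equality $n=g-1$ holds exactly when $F=1$, that is, when $N_g\setminus(\alpha\cup\beta)$ has a single component. Conversely, if the complement is connected, it is one disk by the filling hypothesis, so $F=1$ and $n=g-1$. This proves the ``if and only if'' in the form stated: connectedness of the complement, which for a filling pair is the same as the complement being a single disk.

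\textbf{Main obstacle.} The only delicate point is making sure the complement really gives a CW structure. We need to rule out degenerate situations such as $n=0$, and we need the curves to be in minimal position, so that $|\alpha\cap\beta|$ equals $i(\alpha,\beta)$ and not something larger. Both are built into the definition of a filling system used in the paper, so the Euler characteristic count applies directly.
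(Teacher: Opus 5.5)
Your proof is correct and is essentially the paper's argument: view $\alpha\cup\beta$ as a $4$-valent graph with $i(\alpha,\beta)$ vertices and $2i(\alpha,\beta)$ edges, so $\chi(N_g)=2-g$ gives $i(\alpha,\beta)=g-2+b$, where $b\ge 1$ is the number of complementary disks, with equality exactly when $b=1$. Your check that $n\ge 1$ goes beyond the paper, which does not address it; if you keep it, replace the annulus remark (an open annulus can sit inside a disk, so that reason does not work) with this one: cutting along two disjoint circles preserves Euler characteristic, so $b$ closed disks would force $2-g=b\ge 1$, which is impossible for $g\ge 2$, and $N_1$ has no filling pairs at all.
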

\begin{proof}
    The curves $\alpha$ and $\beta$ fill the surface, implying that $N_g\setminus\alpha\cup\beta$ is a disjoint union of $b$ topological disks. We regard $\alpha\cup \beta$ as the $4$-valent graph embedded in a surface where the intersection points between $\alpha$ and $\beta$ are the vertices and the sub-arcs between the vertices are the edges. The graph $\alpha\cup\beta$ corresponds to a cell decomposition of $N_g$ in which the number of vertices is $i(\alpha,\beta)$, 4-valency of the graph implies that the number of edges is $2i(\alpha,\beta)$, and the number of faces is $b$. Therefore, the Euler characteristics $\chi(N_g)$ of $N_g$ satisfies, 
\begin{align*}
    \chi(N_g)=2-g&=i(\alpha,\beta)-2i(\alpha,\beta)+b\\ \implies i(\alpha,\beta)&=g-2+b
\end{align*} 
    Therefore, \(i(\alpha,\beta)\) attains its minimum value when \(b\) is minimum. Since \(b\geq 1\), we obtain
$
i(\alpha,\beta)\geq g-1.
$
Moreover, equality holds if and only if \(b=1\), that is, if and only if \((\alpha,\beta)\) is a minimal filling pair.

   \end{proof}

Next, we construct minimal filling pairs on $N_g$, for $g\geq 2$. The idea of the construction is the following: By Lemma~\ref{minimally intersecting}, a minimal filling pair on $N_g$ can be considered by a $4$-valent graph with $g-1$ vertices. Motivated by this observation, we begin with a $4$-valent fat graph $G$ having $g-1$ vertices and two standard cycles. The associated orientable surface $\Sigma(G)$ is then modified by inserting edge twists along suitable edges to obtain a non-orientable surface with a single boundary component, denoted by $\Sigma^N(G)$. Finally, capping the boundary by a topological disk yields a closed non-orientable surface $N_g^G$ which is homeomorphic to $N_g$, and the pair of standard cycles corresponds to a minimal filling pair. The idea is illustrated in the example below.
\begin{example}
Consider the $4$-valent fat graph $G$ with two vertices and two standard cycles $\alpha=\alpha_1\cup\alpha_2$ and $\beta=\beta_1\cup\beta_2$ as in Figure~\ref{fat graph and surface} (a). Thickening $G$ produces an orientable surface $\Sigma(G)$ with four boundary components $b_1=\alpha_1\beta_1^{-1},b_2=\alpha_1^{-1}\beta_2^{-1},b_3=\beta_2\alpha_2^{-1},$ and $b_4=\beta_1\alpha_2$. By inserting edge twists along the edges $\alpha_1$, $\alpha_2$, and $\beta_1$, we obtain a non-orientable surface with a single boundary component, as illustrated in Figure~\ref{After inserting the twist}. A straightforward computation of the Euler characteristic shows that this surface has non-orientable genus $3$. Finally, capping the boundary component with a topological disk yields a closed non-orientable surface of genus $3$. The standard cycles $\alpha$ and $\beta$ descend to simple closed curves on this closed surface and form a minimal filling pair on $N_g, g=3$.
\end{example}
\begin{figure}[htbp]
    \centering

\tikzset{every picture/.style={line width=0.75pt}} 

\tikzset{every picture/.style={line width=0.75pt}} 

\begin{tikzpicture}[x=0.75pt,y=0.75pt,yscale=-1,xscale=1]

\draw   (79,92.82) .. controls (79,82.73) and (87.18,74.55) .. (97.27,74.55) -- (281.17,74.55) .. controls (291.25,74.55) and (299.43,82.73) .. (299.43,92.82) -- (299.43,147.61) .. controls (299.43,157.7) and (291.25,165.88) .. (281.17,165.88) -- (97.27,165.88) .. controls (87.18,165.88) and (79,157.7) .. (79,147.61) -- cycle ;
\draw   (118.27,47.85) .. controls (118.27,37.66) and (126.53,29.4) .. (136.72,29.4) -- (239.93,29.4) .. controls (250.12,29.4) and (258.38,37.66) .. (258.38,47.85) -- (258.38,103.21) .. controls (258.38,113.41) and (250.12,121.67) .. (239.93,121.67) -- (136.72,121.67) .. controls (126.53,121.67) and (118.27,113.41) .. (118.27,103.21) -- cycle ;
\draw    (389.21,53.09) -- (389.21,76.86) ;
\draw    (361.42,76.86) -- (389.21,76.86) ;
\draw    (361.39,88.08) -- (389.11,87.96) ;
\draw    (389.24,111.79) -- (389.11,87.96) ;
\draw    (402.18,108.64) -- (402.2,88.06) ;
\draw    (430,87.99) -- (402.2,88.06) ;
\draw    (429.99,76.77) -- (402.27,76.96) ;
\draw    (402.05,53.13) -- (402.27,76.96) ;
\draw    (543.01,53.09) -- (543.01,76.86) ;
\draw    (515.22,76.86) -- (543.01,76.86) ;
\draw    (515.19,88.08) -- (542.91,87.96) ;
\draw    (543.01,108.64) -- (542.91,87.96) ;
\draw    (556.09,111.83) -- (556.01,88.06) ;
\draw    (583.8,87.99) -- (556.01,88.06) ;
\draw    (583.79,76.77) -- (556.07,76.96) ;
\draw    (555.85,53.13) -- (556.07,76.96) ;
\draw    (429.99,76.77) -- (515.22,76.86) ;
\draw    (430,87.99) -- (515.23,88.08) ;
\draw    (366.98,156.31) -- (578.22,156.31) ;
\draw    (368.83,167.44) -- (580.08,167.44) ;
\draw    (402.05,53.13) .. controls (407.74,29.19) and (550.43,35.55) .. (543.01,53.09) ;
\draw    (389.21,53.09) .. controls (381.8,21.25) and (555.99,18.07) .. (555.85,53.13) ;
\draw    (389.24,111.79) .. controls (394.77,137.25) and (557.84,134.07) .. (556.09,111.83) ;
\draw    (402.18,108.64) .. controls (417.01,124.53) and (535.6,121.36) .. (543.01,108.64) ;
\draw    (578.22,156.31) .. controls (602.31,161.08) and (604.16,86.4) .. (583.8,87.99) ;
\draw    (580.08,167.44) .. controls (613.43,173.79) and (624.55,75.27) .. (583.79,76.77) ;
\draw    (368.83,167.44) .. controls (328.06,168.23) and (331.77,76.07) .. (361.42,76.86) ;
\draw    (366.98,156.31) .. controls (350.3,157.11) and (346.59,98.32) .. (361.39,88.08) ;
\draw   (461.86,25.73) -- (468.99,28.26) -- (461.86,30.78) ;
\draw   (464.53,74.77) -- (472.61,76.72) -- (464.53,78.67) ;
\draw   (472.6,89.78) -- (464.53,87.82) -- (472.61,85.88) ;
\draw   (470.78,117.34) -- (478.85,119.29) -- (470.78,121.23) ;
\draw   (469.03,39.81) -- (460.96,37.86) -- (469.04,35.91) ;
\draw   (477.07,131.41) -- (468.99,129.46) -- (477.07,127.52) ;
\draw   (471.67,154.35) -- (479.74,156.3) -- (471.67,158.25) ;
\draw   (480.64,169.35) -- (472.56,167.4) -- (480.64,165.46) ;
\draw   (172.71,70.15) -- (182.52,74.29) -- (172.71,78.44) ;
\draw   (175.38,24.81) -- (185.2,28.95) -- (175.38,33.1) ;
\draw   (197.66,125.68) -- (187.88,121.43) -- (197.73,117.39) ;
\draw   (202.12,170.09) -- (192.34,165.85) -- (202.19,161.8) ;

\draw (179.88,77.18) node [anchor=north west][inner sep=0.75pt]   [align=left] {$\displaystyle \alpha _{1}$};
\draw (173.09,168.78) node [anchor=north west][inner sep=0.75pt]   [align=left] {$\displaystyle \alpha _{2}$};
\draw (183.52,32.54) node [anchor=north west][inner sep=0.75pt]   [align=left] {$\displaystyle \beta _{1}$};
\draw (168.88,123.29) node [anchor=north west][inner sep=0.75pt]   [align=left] {$\displaystyle \beta _{2}$};
\draw (444.93,60.37) node [anchor=north west][inner sep=0.75pt]  [font=\scriptsize] [align=left] {$\displaystyle \alpha _{1}$};
\draw (444.61,86.17) node [anchor=north west][inner sep=0.75pt]  [font=\scriptsize] [align=left] {$\displaystyle \alpha _{1}^{-1}$};
\draw (474.44,11.33) node [anchor=north west][inner sep=0.75pt]  [font=\scriptsize] [align=left] {$\displaystyle \beta _{1}$};
\draw (474.11,38.05) node [anchor=north west][inner sep=0.75pt]  [font=\scriptsize] [align=left] {$\displaystyle \beta _{1}^{-1}$};
\draw (480.47,103.86) node [anchor=north west][inner sep=0.75pt]  [font=\scriptsize] [align=left] {$\displaystyle \beta _{2}^{-1}$};
\draw (479.9,129.84) node [anchor=north west][inner sep=0.75pt]  [font=\scriptsize] [align=left] {$\displaystyle \beta _{2}$};
\draw (444.61,140.06) node [anchor=north west][inner sep=0.75pt]  [font=\scriptsize] [align=left] {$\displaystyle \alpha _{2}^{-1}$};
\draw (448.5,167.86) node [anchor=north west][inner sep=0.75pt]  [font=\scriptsize] [align=left] {$\displaystyle \alpha _{2}$};
\draw (177,193) node [anchor=north west][inner sep=0.75pt]   [align=left] {(a)};
\draw (464,193) node [anchor=north west][inner sep=0.75pt]   [align=left] {(b)};

\end{tikzpicture}

    \caption{(a) The fat graph $G$, (b) Associated orientable surface $\Sigma(G) $}
    \label{fat graph and surface}
\end{figure}

\begin{figure}[htbp]
    \centering

\tikzset{every picture/.style={line width=0.75pt}} 

\begin{tikzpicture}[x=0.75pt,y=0.75pt,yscale=-1,xscale=1]

\draw    (198.6,79.14) -- (225.59,79.09) ;
\draw    (225.59,56.36) -- (225.59,79.09) ;
\draw    (225.61,113.21) -- (225.58,90.49) ;
\draw    (198.59,90.47) -- (225.58,90.49) ;
\draw    (266,90.57) -- (239.01,90.35) ;
\draw    (238.7,113.08) -- (239.01,90.35) ;
\draw    (239.46,56.23) -- (239.18,78.96) ;
\draw    (266.17,79.24) -- (239.18,78.96) ;
\draw    (361.88,78) -- (388.87,77.95) ;
\draw    (388.87,55.23) -- (388.87,77.95) ;
\draw    (388.89,112.08) -- (388.85,89.35) ;
\draw    (361.86,89.33) -- (388.85,89.35) ;
\draw    (429.27,89.44) -- (402.29,89.22) ;
\draw    (401.98,111.94) -- (402.29,89.22) ;
\draw    (402.74,55.1) -- (402.46,77.82) ;
\draw    (429.44,78.1) -- (402.46,77.82) ;
\draw    (238.7,113.08) .. controls (240.44,125.68) and (379.42,127.96) .. (388.89,112.08) ;
\draw    (225.61,113.21) .. controls (213.45,138.19) and (402.36,143.87) .. (401.98,111.94) ;
\draw    (429.27,89.44) .. controls (446.89,91.59) and (441.5,162.05) .. (426.65,160.91) ;
\draw    (429.44,78.1) .. controls (463.09,77.95) and (461.74,173.41) .. (428,174.55) ;
\draw    (198.59,90.47) .. controls (175.66,99.55) and (186.46,163.19) .. (204,162.09) ;
\draw    (198.6,79.14) .. controls (158.12,77.95) and (160.82,174.55) .. (205.35,175.73) ;
\draw    (225.59,56.36) .. controls (236.39,17.72) and (337.59,48.41) .. (388.87,55.23) ;
\draw    (239.46,56.23) .. controls (247.18,45) and (282.27,43.86) .. (302.51,43.86) ;
\draw    (324,38.96) .. controls (371.23,33.23) and (399.66,40.45) .. (402.74,55.1) ;
\draw    (266.17,79.24) .. controls (309.25,74.55) and (307.91,93.86) .. (361.86,89.33) ;
\draw    (266,90.57) .. controls (280.92,89.32) and (286.32,93.86) .. (307.91,87.05) ;
\draw    (318.7,82.54) .. controls (341.64,76.82) and (355.13,76.82) .. (361.88,78) ;
\draw    (204,162.09) .. controls (274.17,158.64) and (355.13,183.64) .. (428,174.55) ;
\draw    (205.35,175.73) .. controls (232.34,174.55) and (255.28,181.37) .. (303.86,173.41) ;
\draw    (317.35,166.64) .. controls (367.28,159.78) and (349.74,163.19) .. (426.65,160.91) ;
\draw [dotted  ,draw opacity=1 ]   (302.51,43.86) -- (324,38.96) ;
\draw [dotted  ,draw opacity=1 ]   (307.91,87.05) -- (318.7,82.54) ;
\draw [dotted  ,draw opacity=1 ]   (303.86,173.41) -- (317.35,166.64) ;

\draw (307.18,190.53) node [anchor=north west][inner sep=0.75pt]   [align=left] {(c)};

\end{tikzpicture}
    \caption{After inserting the twists}
    \label{After inserting the twist}
\end{figure}

 To construct a minimal filling pair on \(N_g, g\geq 2\), consider a $4$-valent fat graph \(G\) with \(g-1\) vertices and two standard cycles, say \(\alpha\) and \(\beta\). The valency condition on implies that \(G\) has exactly \(2g-2\) edges. We label the edges of \(G\) as follows. Choose an initial edge $\alpha_1$ and a direction on the cycle \(\alpha\). Starting from \(\alpha_1\) and following the direction of \(\alpha\), we label the remaining edges by
$
\alpha_1,\alpha_2,\dots,\alpha_{g-1}.
$
Similarly, we label the edges of the cycle $\beta$ as
$
\beta_1,\beta_2,\dots,\beta_{g-1}.
$
Consider $\Sigma(G)$, the orientable surface associated with $G$. Suppose \(\Sigma(G)\) has \(k\) boundary components, we denoted them by
$
b_1,b_2,\dots,b_k.
$
Each boundary component \(b_i\) is represented by a cyclic word with alphabets in
\[
\mathcal{A}=\{\alpha_1,\ldots,\alpha_{g-1},
\beta_1,\ldots,\beta_{g-1},
\alpha_1^{-1},\ldots,\alpha_{g-1}^{-1},
\beta_1^{-1},\ldots,\beta_{g-1}^{-1}\},
\]
where the exponent indicates the direction in which the corresponding edge is traversed. 

To describe the interaction among the boundary components of
\(\Sigma(G)\), we define an auxiliary graph as follows.  The vertex set of \(\Gamma_G\) is
\[
V(\Gamma_G)=\{b_1,b_2,\ldots,b_k\}.
\]
Two distinct vertices \(b_i\) and \(b_j\) are joined by an edge whenever there exists an edge
$
e\in\{\alpha_1,\ldots,\alpha_{g-1},
\beta_1,\ldots,\beta_{g-1}\}
$
such that \(e\) occurs in the cyclic word representing one of \(b_i\) and
\(e^{-1}\) occurs in the cyclic word representing the other. We denote to
this edge of \(\Gamma_G\) simply by \(e\). We prove the connectedness of $\Gamma_G$ in the following lemma.
\begin{lemma} \label{boundary graph connected}
The graph \(\Gamma_G\) is connected. Consequently, there exists a spanning
tree \(T\) of \(\Gamma_G\). In particular, one can choose \(k-1\) edges
\[
e_1,e_2,\ldots,e_{k-1}\in
\{\alpha_1,\ldots,\alpha_{g-1},
\beta_1,\ldots,\beta_{g-1}\}
\]
such that the corresponding edges of \(\Gamma_G\) connect all the vertices
\(b_1,\ldots,b_k\) and contain no cycle.
\end{lemma}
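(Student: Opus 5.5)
The plan is to deduce connectedness of $\Gamma_G$ from connectedness of the thickened surface $\Sigma(G)$, or equivalently of $G$ itself. First I note that $G$ is connected. A $4$-valent fat graph whose edge set is exhausted by two standard cycles $\alpha$ and $\beta$, with each vertex an intersection point of $\alpha$ and $\beta$, is connected. Indeed, $\alpha$ and $\beta$ are each connected, and they share a vertex because $g-1\ge 1$. Hence $\Sigma(G)$ is a connected surface. Its boundary components $b_1,\dots,b_k$ are the cycles of the face permutation $\sigma_0\sigma_1$ (or $\sigma_1\sigma_0$, depending on convention) acting on the directed edges $E$. Each undirected edge $e$ contributes exactly two directed edges, $\vec e$ and $\cev e$. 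These appear as the letters $e$ and $e^{-1}$ in the boundary words.

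The key observation concerns a single undirected edge $e$. The two sides of its rectangle in $\Sigma(G)$ lie on boundary components containing $e$ and $e^{-1}$ respectively. So either both letters lie in the same $b_i$, or the edge $e$ of $\Gamma_G$ joins the two components containing them. Now suppose $\Gamma_G$ is disconnected, and let $B\subsetneq\{b_1,\dots,b_k\}$ be the vertex set of a connected component, with $B\neq\emptyset$. Let $E_B\subset E$ be the set of directed edges whose letters occur in the words of $B$. By the observation, $E_B$ is closed under $\sigma_1$. It is also closed under the face permutation, since it is a union of face cycles. Therefore $E_B$ is closed under $\sigma_0$, the product of these two permutations. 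The complement of $E_B$ has the same three closure properties.

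Closure under $\sigma_0$ means $E_B$ is a union of vertices, that is, of equivalence classes of $\sim$. Closure under $\sigma_1$ means no undirected edge joins a vertex in $E_B$ to a vertex outside it. So $E_B$ and its complement give a nontrivial splitting of $G$ into two subgraphs with no edges between them, using that both sets are nonempty. This contradicts the connectedness of $G$. Hence $\Gamma_G$ is connected.

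The remaining claims are standard graph theory. A connected finite graph on $k$ vertices has a spanning tree with exactly $k-1$ edges and no cycle. Each edge of $\Gamma_G$ is labelled by some $e\in\{\alpha_1,\ldots,\alpha_{g-1},\beta_1,\ldots,\beta_{g-1}\}$, and this gives the chosen $e_1,\dots,e_{k-1}$. These labels are distinct, because each $e$ yields at most one edge of $\Gamma_G$: the letters $e$ and $e^{-1}$ occur once each among all boundary words. The only delicate point is fixing the convention for the face permutation so that "union of face cycles closed under $\sigma_1$" really gives closure under $\sigma_0$. This works with either convention, so I expect no genuine obstacle.
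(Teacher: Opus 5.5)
Your proof is correct and follows essentially the same route as the paper. Both arguments assume $\Gamma_G$ is disconnected, observe that no edge $e$ of $G$ has its occurrences $e$ and $e^{-1}$ split across the resulting partition of boundary components, and contradict the connectedness of $G$ (equivalently of $\Sigma(G)$). Your observation that $\sigma_0$ is a product of $\sigma_1$ and the face permutation also makes explicit a step the paper leaves implicit: the vertex disks cannot glue the two parts of $\Sigma(G)$ together.
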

\begin{proof} The proof is by contradiction, where the main ingredient is the connectedness of $\Sigma(G)$ as $G$ is connected. Assume, \(\Gamma_G\) is not connected. Then the vertex set of \(\Gamma_G\) can be partitioned into two non-empty subsets
$
V(\Gamma_G)=V_1\sqcup V_2,
$
such that there is no edge of \(\Gamma_G\) joining a vertex of \(V_1\) to a vertex of \(V_2\). This induces a partition of the alphabet
\[
\mathcal{A}
=\{\alpha_1,\ldots,\alpha_{g-1},
\beta_1,\ldots,\beta_{g-1},
\alpha_1^{-1},\ldots,\alpha_{g-1}^{-1},
\beta_1^{-1},\ldots,\beta_{g-1}^{-1}\}
\]
into two disjoint subsets
$
\mathcal{A}=\mathcal{A}_1\sqcup\mathcal{A}_2,
$
where \(\mathcal{A}_i\) consists of the letters appearing in the boundary
components belonging to \(V_i\). Since there is no edge of
\(\Gamma_G\) joining a vertex of \(V_1\) to a vertex of \(V_2\), every
edge and its inverse belong to the same subset. Consequently, every edge of \(G\) has both of its occurrences in the same
collection of boundary components. Hence no edge of \(G\) joins a
boundary component in \(V_1\) to one in \(V_2\). It follows that the
thickened surface \(\Sigma(G)\) decomposes as the disjoint union of two
surfaces corresponding to \(V_1\) and \(V_2\). Thus \(\Sigma(G)\) is
disconnected, contradicting the connectedness of \(\Sigma(G)\).
 Therefore, \(\Gamma_G\) is connected. Since every connected graph admits
a spanning tree, the result follows.

\end{proof}

Our next objective is to modify the surface \(\Sigma(G)\) to obtain $\Sigma^N(G)$. This is achieved by introducing twists along suitable edges of \(G\).

Suppose that an edge \(e\) of $G$ and its inverse edge $e^{-1}$  appear in the boundary components $b_1$ and $b_2$ respectively. We write,
\[
b_1=eP,\qquad
b_2=e^{-1}Q,
\]
where \(P\) and \(Q\) are words in the edge alphabet.
To perform a twist along \(e\), we cut the ribbon corresponding to \(e\) along its core and reglue the two resulting half-ribbons after reversing one side. Equivalently, if \(e\) is subdivided into half-edges \(e'\) and \(e''\) as in Figure~\ref{twist between two boundary}(a), then the new edges created by the regluing are
\[
E=e'\bigl((e')^{-1}\bigr)^{-1},
\qquad
E'=\bigl((e'')^{-1}\bigr)^{-1}e''.
\]
This operation merges the boundary components \(b_1\) and \(b_2\) into a single boundary component whose cyclic word is
$
EQ^{-1}E'P.
$
\begin{figure}[htbp]
    \centering

\tikzset{every picture/.style={line width=0.75pt}} 

\begin{tikzpicture}[x=0.75pt,y=0.75pt,yscale=-1,xscale=1]

\draw    (146.63,89.02) -- (270.06,89.8) ;
\draw [dashed  ,draw opacity=1 ]   (146.63,89.02) .. controls (116.9,-1.46) and (297.08,-1.46) .. (270.06,89.8) ;
\draw    (270.22,102.44) -- (146.8,102.2) ;
\draw [dashed  ,draw opacity=1 ]   (270.22,102.44) .. controls (300.44,192.78) and (120.27,193.57) .. (146.8,102.2) ;
\draw   (168.26,84.94) -- (179.07,89) -- (168.26,93.06) ;
\draw   (241.23,106.91) -- (233.13,102.34) -- (241.37,97.98) ;
\draw   (233.12,85.76) -- (243.93,89.82) -- (233.12,93.87) ;
\draw    (204.29,84.94) -- (204.29,92.24) ;
\draw   (177.26,106.91) -- (169.17,102.34) -- (177.41,97.98) ;
\draw    (204.29,100.42) -- (204.29,107.73) ;
\draw   (198.08,165.68) -- (216.1,170.96) -- (198.08,176.24) ;
\draw   (218.23,27.96) -- (200.71,21.45) -- (219.13,17.43) ;
\draw [dashed  ,draw opacity=1 ]   (387.17,87.63) .. controls (357.44,-2.85) and (537.62,-2.85) .. (510.59,88.41) ;
\draw [dashed  ,draw opacity=1 ]   (510.76,101.05) .. controls (540.98,191.39) and (360.8,192.18) .. (387.34,100.81) ;
\draw   (458.77,174.57) -- (441.25,168.06) -- (459.66,164.04) ;
\draw    (387.17,87.63) .. controls (466.45,84.91) and (418.7,102.84) .. (510.76,101.05) ;
\draw    (387.34,100.81) .. controls (408.79,99.58) and (408.79,103.65) .. (444.83,97.95) ;
\draw    (453.84,93.87) .. controls (468.25,90.61) and (478.16,87.36) .. (510.59,88.41) ;
\draw   (427.07,82.76) -- (438.29,91.3) -- (424.11,94.69) ;
\draw   (476.28,82.95) -- (489.82,87.68) -- (478.16,95.5) ;
\draw   (458.77,25.57) -- (441.25,19.06) -- (459.66,15.04) ;

\draw (255.05,15.65) node [anchor=north west][inner sep=0.75pt]   [align=left] {$\displaystyle P$};
\draw (260.3,149.28) node [anchor=north west][inner sep=0.75pt]   [align=left] {$\displaystyle Q$};
\draw (155.9,68.61) node [anchor=north west][inner sep=0.75pt]   [align=left] {$\displaystyle e'$};
\draw (235.34,106.72) node [anchor=north west][inner sep=0.75pt]   [align=left] {$\displaystyle e^{\prime -1}$};
\draw (237.78,69.42) node [anchor=north west][inner sep=0.75pt]   [align=left] {$\displaystyle e''$};
\draw (164.97,105.91) node [anchor=north west][inner sep=0.75pt]   [align=left] {$\displaystyle e''^{-1}$};
\draw (200.75,46.61) node [anchor=north west][inner sep=0.75pt]   [align=left] {$\displaystyle b_{1}$};
\draw (203.45,131.35) node [anchor=north west][inner sep=0.75pt]   [align=left] {$\displaystyle b_{2}$};
\draw (495.59,14.26) node [anchor=north west][inner sep=0.75pt]   [align=left] {$\displaystyle P$};
\draw (500.84,147.89) node [anchor=north west][inner sep=0.75pt]   [align=left] {$\displaystyle Q^{-1}$};
\draw (411.8,62.09) node [anchor=north west][inner sep=0.75pt]   [align=left] {$\displaystyle E$};
\draw (478.22,67.79) node [anchor=north west][inner sep=0.75pt]   [align=left] {$\displaystyle E'$};
\draw (198,185) node [anchor=north west][inner sep=0.75pt]   [align=left] {(a)};
\draw (443,185) node [anchor=north west][inner sep=0.75pt]   [align=left] {(b)};

\end{tikzpicture}
    \caption{(a) Before the twist, (b) After the twist }
    \label{twist between two boundary}
\end{figure}

The above construction describes the effect of a twist along a single edge. We now apply this operation simultaneously to the edges
$
e_1,e_2,\dots,e_{k-1},
$
chosen in Lemma~\ref{boundary graph connected}. Since these edges correspond to the edges of a spanning tree of $\Gamma_G$, each twist merges exactly two distinct boundary components into one. As the spanning tree contains no cycle, every twist decreases the number of boundary components by one. Hence, after performing all the twists, the resulting surface $\Sigma^N(G)$ has exactly one boundary component.
Moreover, each twist inserts a half-twist in the corresponding ribbon, so the resulting surface contains an embedded M\"obius strip. Therefore, $\Sigma^N(G)$ is non-orientable.

The preceding construction applies whenever $\Sigma(G)$ has more than one boundary component. It remains to treat the case where $\Sigma(G)$ already has a single boundary component. In this situation, no twists are needed to merge boundary components. However, to obtain a non-orientable surface, we perform a twist along any edge of $G$. More precisely, let the unique boundary component be
$
b=ePe^{-1}Q,
$
where $P$ and $Q$ are words in the edge alphabet. Subdivide the edge $e$ into two half-edges $e'$ and $e''$, and similarly subdivide $e^{-1}$ into $(e'')^{-1}$ and $(e')^{-1}$. We then reglue the corresponding ribbon after inserting a half-twist, thereby replacing the edge segments by
\[
E=e'\big((e')^{-1}\big)^{-1},
\qquad
E'=\big((e'')^{-1}\big)^{-1}e'' ,
\]
as illustrated in Figure~\ref{fig:internal_twist}. This operation preserves the unique boundary component while introducing an embedded M\"obius strip. Consequently, the resulting surface remains connected, has a single boundary component, and is non-orientable. The new boundary word is
$
EP^{-1}E'Q.
$ and the resulting surface is $\Sigma^N(G)$.
\begin{figure}[htbp]
    \centering

\tikzset{every picture/.style={line width=0.75pt}} 

\begin{tikzpicture}[x=0.75pt,y=0.75pt,yscale=-1,xscale=1]

\draw    (166.77,58.63) -- (166.37,109.08) ;
\draw    (189.3,57.98) -- (188.91,108.43) ;
\draw [dashed  ,draw opacity=1 ]   (166.77,58.63) .. controls (52.09,3.55) and (296.9,9.75) .. (189.3,57.98) ;
\draw [dashed  ,draw opacity=1 ]   (166.37,109.08) .. controls (65.24,161.8) and (286.29,165.77) .. (188.91,108.43) ;
\draw [dashed  ,draw opacity=1 ]   (376.77,60.63) .. controls (262.09,5.55) and (506.9,11.75) .. (399.3,59.98) ;
\draw [dashed  ,draw opacity=1 ]   (376.37,111.08) .. controls (275.24,163.8) and (496.29,167.77) .. (398.91,110.43) ;
\draw    (161,83.96) -- (172,83.96) ;
\draw    (184,84.96) -- (195,84.96) ;
\draw   (170.27,72.26) -- (166.7,79.47) -- (162.33,72.71) ;
\draw   (170.27,91.26) -- (166.7,98.47) -- (162.33,91.71) ;
\draw   (184.56,99.52) -- (188.46,92.48) -- (192.52,99.44) ;
\draw   (185.56,77.52) -- (189.46,70.48) -- (193.52,77.44) ;
\draw   (164,144) -- (180,150.23) -- (164,156.46) ;
\draw   (183.77,25.74) -- (168.01,18.94) -- (184.22,13.29) ;
\draw   (394.2,158.2) -- (378,152.49) -- (393.79,145.74) ;
\draw   (393.7,28.82) -- (378.01,21.86) -- (394.28,16.38) ;
\draw    (376.77,60.63) .. controls (373,95.46) and (400,77.46) .. (398.91,110.43) ;
\draw    (399.3,59.98) .. controls (401,83.46) and (397,81.46) .. (392,86.46) ;
\draw    (376.37,111.08) .. controls (375,89.46) and (379,89.46) .. (386,88.46) ;
\draw   (380.14,72.28) -- (379.72,80.5) -- (372.7,76.21) ;
\draw   (394.66,80.09) -- (399.84,72.72) -- (402,81.46) ;

\draw (149,65) node [anchor=north west][inner sep=0.75pt]  [font=\footnotesize] [align=left] {$\displaystyle e'$};
\draw (146,90) node [anchor=north west][inner sep=0.75pt]  [font=\footnotesize] [align=left] {$\displaystyle e''$};
\draw (197,87.96) node [anchor=north west][inner sep=0.75pt]  [font=\footnotesize] [align=left] {$\displaystyle e^{\prime -1}$};
\draw (197,65) node [anchor=north west][inner sep=0.75pt]  [font=\footnotesize] [align=left] {$\displaystyle e'^{\prime -1}$};
\draw (213,146) node [anchor=north west][inner sep=0.75pt]   [align=left] {$\displaystyle P$};
\draw (223,18) node [anchor=north west][inner sep=0.75pt]   [align=left] {$\displaystyle Q$};
\draw (359,73) node [anchor=north west][inner sep=0.75pt]   [align=left] {$\displaystyle E$};
\draw (404,71) node [anchor=north west][inner sep=0.75pt]   [align=left] {$\displaystyle E'$};
\draw (407,150) node [anchor=north west][inner sep=0.75pt]   [align=left] {$\displaystyle P^{-1}$};
\draw (436,24) node [anchor=north west][inner sep=0.75pt]   [align=left] {$\displaystyle Q^{-1}$};
\draw (165,167) node [anchor=north west][inner sep=0.75pt]   [align=left] {(a)};
\draw (380,167) node [anchor=north west][inner sep=0.75pt]   [align=left] {(b)};

\end{tikzpicture}
    \caption{(a) Before the twist, (b) After the twist}
    \label{fig:internal_twist}
\end{figure}

We have established that every $4$-valent fat graph $G$ with $g-1$ vertices and two standard cycles gives rise to a non-orientable surface $\Sigma^N(G)$ with exactly one boundary component. Observe that $G$ is a deformation retract of $\Sigma^N(G)$. Hence, they have the same Euler characteristic.
Since $G$ has $g-1$ vertices and $2(g-1)$ edges, we obtain
\[
\chi(\Sigma^N(G))
=\chi(G)
=(g-1)-2(g-1)
=-(g-1).
\]
Now, suppose that $\Sigma^N(G)$ has non-orientable genus $g'$. As $\Sigma^N(G)$ has exactly one boundary component, we have
\begin{align*}
\chi(\Sigma^N(G))
&=2-g'-1\\
\implies-(g-1)&=1-g'\\
\implies g'&=g.
\end{align*}
Hence $\Sigma^N(G)$ is a non-orientable surface of genus $g$ with a single boundary component.

Finally, by capping the boundary component with a topological disk, we obtain the closed non-orientable surface $N_g^G$ which is homeomorphic to $N_g$. The fat graph $G$ is naturally embedded in $N_g^G$, and its two standard cycles determine two simple closed curves, denoted by $\alpha$ and $\beta$. Since the complement of $G$ in $N_g^G$ is precisely the capping disk, Hence $(\alpha,\beta)$ is a minimal filling pair on $N_g$. This completes the proof of Theorem~\ref{construction theorem}.
\qed
\begin{lemma}\label{g=1}
    There exists no minimal filling pair of $N_g$, for $g=1$. 
\end{lemma}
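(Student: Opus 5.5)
By Lemma~\ref{minimally intersecting}, a minimal filling pair $(\alpha,\beta)$ on $N_1$ would satisfy $i(\alpha,\beta)=g-1=0$. The plan is to show that a pair of disjoint curves cannot fill $N_1=\mathbb{RP}^2$. That contradiction proves the lemma.

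Suppose $i(\alpha,\beta)=0$, so $\alpha$ and $\beta$ are disjoint essential simple closed curves. The union $\alpha\cup\beta$ then has no vertices in the sense of the cell decomposition used in the proof of Lemma~\ref{minimally intersecting}. The complement $N_1\setminus(\alpha\cup\beta)$ must still be a single open disk. Removing a single simple closed curve from a surface changes the Euler characteristic by $0$, whether we use the cut-open surface or the open complement. Hence the complement has Euler characteristic $\chi(N_1)=1$ if it is a union of disks. A single disk also has $\chi=1$, so a cruder argument is needed.

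I will argue directly from the fact that every essential simple closed curve on $\mathbb{RP}^2$ is one-sided. Cutting $\mathbb{RP}^2$ along $\alpha$ yields a disk. Its interior is $N_1\setminus\alpha$, and it contains no essential curve. Since $\beta$ is disjoint from $\alpha$, it lies in this disk and therefore bounds a disk in $N_1$. This contradicts the requirement that the curves in a filling pair are essential. Alternatively, $\beta$ could be made disjoint from $\alpha$ after homotopy, which would make $\beta$ inessential; either way, no filling pair exists.

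The remaining issue is the convention on essential curves. In the introduction, curves bounding a crosscap are called trivial, and on $N_1$ every simple closed curve bounds either a disk or a crosscap. So even the first step already shows that there are no essential curves at all on $N_1$. I expect the main subtlety to be reconciling these two definitions of essential. With either definition, the case $i=0$ is ruled out as above, and hence there is no minimal filling pair on $N_1$.
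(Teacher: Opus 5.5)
Your argument is correct, but it takes a different route from the paper. The paper never reduces to the case $i(\alpha,\beta)=0$. It observes that all essential simple closed curves on $N_1$ lie in a single free homotopy class, since $\pi_1(N_1)\cong\mathbb{Z}/2\mathbb{Z}$. A filling system must consist of homotopically distinct curves, so $N_1$ carries no filling pair at all. You instead use the definition of \emph{minimal} (equality in Lemma~\ref{minimally intersecting}) to force $i(\alpha,\beta)=0$, and hence disjointness by minimal position. You then use a cut-and-paste fact: the complement of an essential, necessarily one-sided, curve in $\mathbb{RP}^2$ is an open disk, so by Schoenflies any disjoint simple closed curve bounds a disk.

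Your route never uses the homotopic-distinctness clause. So it proves something the paper's argument does not: even two homotopic essential curves cannot have a single-disk complement. On the other hand, it only treats $i=0$, so it depends on reading ``minimal'' as attaining the bound $g-1$. The paper's argument gives the stronger conclusion that no filling pair exists. That conclusion holds whether ``minimal'' means attaining $g-1$ or minimizing $i$ among existing filling pairs.

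Two side remarks. The Euler-characteristic paragraph is a dead end, as you noticed, and can be dropped. Your closing claim that $N_1$ has no essential curves at all should not be relied on. Under the Section~2 convention (not bounding a disk or a M\"obius strip), the one-sided core curve is essential, and the paper's own proof treats it as the unique essential class on $N_1$. Your main argument does not need that claim.
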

\begin{proof}
    Since every essential simple closed curve on $N_1$ is freely homotopic to the unique essential simple closed curve, there do not exist two homotopically distinct essential simple closed curves on $N_1$. Consequently, $N_1$ admits no filling pair, and hence no minimal filling pair.
\end{proof}
\begin{figure}[htbp]
    \centering

\tikzset{every picture/.style={line width=0.75pt}} 

\begin{tikzpicture}[x=0.75pt,y=0.75pt,yscale=-1,xscale=1]

\draw   (262,98.48) .. controls (262,63.95) and (289.99,35.96) .. (324.52,35.96) .. controls (359.05,35.96) and (387.04,63.95) .. (387.04,98.48) .. controls (387.04,133.01) and (359.05,161) .. (324.52,161) .. controls (289.99,161) and (262,133.01) .. (262,98.48) -- cycle ;
\draw   (305.04,95.98) .. controls (305.04,86.05) and (313.09,78) .. (323.02,78) .. controls (332.95,78) and (341,86.05) .. (341,95.98) .. controls (341,105.91) and (332.95,113.96) .. (323.02,113.96) .. controls (313.09,113.96) and (305.04,105.91) .. (305.04,95.98) -- cycle ;
\draw   (300.97,102.01) -- (305.43,92.06) -- (310,101.96) ;
\draw   (345.01,92.09) -- (340.42,101.99) -- (335.98,92.02) ;
\draw [color={rgb, 255:red, 208; green, 2; blue, 27 }  ,draw opacity=1 ]   (323.02,113.96) .. controls (280,152.96) and (270,47.96) .. (323.02,78) ;

\end{tikzpicture}
    \caption{Curves in $N_1$}
    \label{curves in N1}
\end{figure}



\section{Lower bound of the mapping class group orbit}
In this section, we deduce the lower bound in Theorem~\ref{bounds}. We begin by associating a permutation to every minimal filling pair in Section~\ref{filling permutaion}. Next, we characterize when two such pairs lie in the same mapping class group orbit in terms of these permutations in Proposition~\ref{3.3}. This enables us to bound the size of each mapping class group orbit. Combining this bound with the family of minimal filling pairs obtained from Theorem~\ref{construction theorem}, we derive the desired lower bound in Proposition~\ref{lowerbound}.

\subsection{Filling permutaion}\label{filling permutaion}
Consider $(\alpha,\beta)$, an oriented minimal filling pair on $N_g$, where $g>1$. The union
$
G=\alpha\cup\beta
$
 is naturally an oriented $4$-valent graph with $g-1$ vertices and two standard cycles with
$
E_1(G)=\{\alpha_1,\beta_1,\ldots,\alpha_{g-1},\beta_{g-1}\}
$ (as described in Section~\ref{section 3}). 
Since $(\alpha,\beta)$ fills $N_g$ minimally, we can regard $N_g\setminus G$ as a polygon $P(\alpha,\beta)$. 
Each directed edge of $G$ contributes exactly two directed sides in $P(\alpha,\beta)$. Therefore, the number of sides of $P(\alpha,\beta)$ is a $4g-4$-gon. Traversing the boundary of the polygon $P(\alpha,\beta)$ anticlockwise, we label each side by the corresponding edge with a positive sign if it agrees with the orientation of the edge in $G$; otherwise, we label it with a negative sign. If both sides corresponding to an edge have the same orientation, we distinguish them by writing $\{e^{(1)}, e^{(2)}\}$ or $\{e^{(-1)},e^{(-2)}\},$ according to whether they agree or disagree with the anticlockwise direction of the polygon. If the two occurrences induce opposite directions, we denote them by
$\{e^{(1)},e^{(-1)}\}.$ Therefore, we get the boundary word with letters in the following ordered set,
\begin{align*}
A(g)=\big\{
\alpha_1^{(1)},\beta_1^{(1)},\dots,\alpha_{g-1}^{(1)},\beta_{g-1}^{(1)},
\alpha_1^{(2)},\beta_1^{(2)},\dots,\alpha_{g-1}^{(2)},\beta_{g-1}^{(2)},\\
\alpha_1^{(-1)},\beta_1^{(-1)},\dots,\alpha_{g-1}^{(-1)},\beta_{g-1}^{(-1)},
\alpha_1^{(-2)},\beta_1^{(-2)},\dots,\alpha_{g-1}^{(-2)},\beta_{g-1}^{(-2)} \big\}.
\end{align*}
Let
$
\iota:A(g)\longrightarrow \{1,2,\ldots,8g-8\}
$
 be the order-preserving bijection determined by the above ordering. Replacing each letter of the boundary word by its image under $\iota$, we obtain a $(4g-4)$-cycle in the symmetric group $S_{8g-8}$. We call this cycle the \emph{filling permutation} associated with $(\alpha,\beta)$ and denoted by $\sigma^{(\alpha,\beta)}$. Let $\mathcal{C}(N_g)$ denote the set of all filling permutations.
The filling permutation of the minimal filling pair in Figure~\ref{filling pair on N2} is described in the following example.
\begin{example}
Figure~\ref{filling pair on N2} shows an oriented minimal filling pair $(\alpha,\beta)$ on $N_2$. The associated fat graph has two edges, namely $\alpha$ and $\beta$. Hence,
\[
A(2)=\{\alpha^{(1)},\beta^{(1)},\alpha^{(2)},\beta^{(2)},
\alpha^{(-1)},\beta^{(-1)},\alpha^{(-2)},\beta^{(-2)}\}.
\]
The boundary word is
$
\alpha^{(1)}\beta^{(1)}\alpha^{(-1)}\beta^{(2)}.
$
Thus, the associated filling permutation is
$
(1\;2\;5\;4).
$
\end{example}

\subsection{Characterization of mapping class group orbits}
Having associated a filling permutation to each minimal filling pair, we now describe how these permutations are related when the filling pairs belong to the same mapping class group orbit.
\begin{pro}\label{3.3}
Let
$
\Gamma_1=(\alpha,\beta)$ and 
$\Gamma_2=(\tilde{\alpha},\tilde{\beta})$
be two minimal filling pairs on \(N_g\) lying in the same
\(\mathrm{Mod}(N_g)\)-orbit. 
Then the filling permutations $\sigma^{(\alpha,\beta)}$ and $\sigma^{(\tilde{\alpha},\tilde{\beta})}$ satisfy either
$
\sigma^{(\alpha,\beta)}
=
\phi^{-1}
\sigma^{(\tilde{\alpha},\tilde{\beta})}
\phi,
$
or
$
\sigma^{(\alpha,\beta)}
=
\phi^{-1}
\left(\sigma^{(\tilde{\alpha},\tilde{\beta})}\right)^{-1}
\phi,
$ where
\[
\phi=A^{i}B^{j}C^{m}D^{n}
\prod_{r\in I}E_{\alpha_r}^{p_r}
\prod_{s\in J}E_{\beta_s}^{q_s},
\]
for some 
$
i,j\in\{0,1,\dots,g-2\},
m,n,p^r,q^s\in\{0,1\},
$
and subsets
$
I,J\subseteq\{1,2,\dots,g-1\}.
$

Here the permutations \(A,B,C,D,E_{\alpha_r},E_{\beta_s}\in\Sigma_{8g-8}\) are defined as follows.

\[
\begin{aligned}
A=&
(1\;3\;5\;\dots\;2g-3)
(2g-1\;2g+1\;\dots\;4g-5)\\
&
(4g-3\;4g-1\;\dots\;6g-7)
(6g-5\;6g-3\;\dots\;8g-9),
\end{aligned}
\]

\[
\begin{aligned}
B=&
(2\;4\;6\;\dots\;2g-2)
(2g\;2g+2\;\dots\;4g-4)\\
&
(4g-2\;4g\;\dots\;6g-6)
(6g-4\;6g-2\;\dots\;8g-8),
\end{aligned}
\]

\[
\begin{aligned}
C=&
(1\;4g-3)(3\;4g-1)\cdots(2g-3\;6g-7)\\
&
(2g-1\;6g-5)(2g+1\;6g-3)\cdots(4g-5\;8g-9),
\end{aligned}
\]

\[
\begin{aligned}
D=&
(2\;4g-2)(4\;4g)\cdots(2g-2\;6g-6)\\
&
(2g\;6g-4)(2g+2\;6g-2)\cdots(4g-4\;8g-8),
\end{aligned}
\]

For each \(r\in\{1,2,\dots,g-1\}\),
\[
E_{\alpha_r}
=
(2r-1\;\;2g+2r-3)
(4g+2r-5\;\;6g+2r-7),
\]
and for each \(s\in\{1,2,\dots,g-1\}\),
\[
E_{\beta_s}
=
(2s\;\;2g+2s-2)
(4g+2s-4\;\;6g+2s-6).
\]
\end{pro}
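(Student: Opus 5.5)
Let $f\in\mathrm{Mod}(N_g)$ with $f\cdot\Gamma_1=\Gamma_2$. The plan is to represent $f$ by a homeomorphism $F$ of $N_g$ with $F(\alpha)=\tilde\alpha$ and $F(\beta)=\tilde\beta$ as sets, and to read off from $F$ a relabelling of the letters of $A(g)$. We may take $F$ to carry curves to curves exactly, since both pairs are in minimal position and filling. We may also assume $F$ sends $\alpha$ to $\tilde\alpha$ rather than to $\tilde\beta$, since the orbit is defined for ordered pairs.

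Then $F$ restricts to an isomorphism of the $4$-valent graphs $G=\alpha\cup\beta$ and $\tilde G=\tilde\alpha\cup\tilde\beta$ which preserves the two standard cycles. It also maps the complementary disc $P(\alpha,\beta)$ homeomorphically onto $P(\tilde\alpha,\tilde\beta)$, either preserving or reversing the boundary orientation. Consequently the boundary word of $P(\alpha,\beta)$, after relabelling each letter by its image under $F$, equals the boundary word of $P(\tilde\alpha,\tilde\beta)$ read anticlockwise or clockwise. In terms of filling permutations this gives $\sigma^{(\alpha,\beta)}=\phi^{-1}\sigma^{(\tilde\alpha,\tilde\beta)}\phi$ in the first case and $\sigma^{(\alpha,\beta)}=\phi^{-1}(\sigma^{(\tilde\alpha,\tilde\beta)})^{-1}\phi$ in the second. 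Here $\phi=\iota\circ F_*\circ\iota^{-1}$ is the permutation of $\{1,\dots,8g-8\}$ induced on side labels.

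It remains to show that $\phi$ lies in the set generated as stated, which I would do by decomposing $F_*$ into elementary effects.

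\emph{Shift of starting edge on $\alpha$.} $F$ maps the cycle $\alpha$ to $\tilde\alpha$, so it sends the edge sequence $\alpha_1,\dots,\alpha_{g-1}$ to a cyclic shift of $\tilde\alpha_1,\dots,\tilde\alpha_{g-1}$, possibly reversed. A cyclic shift by $i$ acts on the labels of all four blocks of $A(g)$ simultaneously, sending $\alpha_r^{(\epsilon)}\mapsto\alpha_{r+1}^{(\epsilon)}$ on the odd positions. This is exactly the permutation $A$ (its four cycles are the four sign blocks), so the shift contributes $A^i$.

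\emph{Shift on $\beta$.} The same argument for $\beta$ gives $B^j$ on the even positions.

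\emph{Reversal of orientation.} If $F$ reverses the orientation of $\alpha$, each occurrence label $(1),(2)$ is exchanged with $(-1),(-2)$ for the $\alpha$-letters; this is $C$. The analogous statement for $\beta$ gives $D$. These two effects commute with the shifts up to reordering, which only alters the exponents.

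\emph{Superscript ambiguity.} When the two sides coming from an edge have the same sign, the labels $e^{(1)},e^{(2)}$ (respectively $e^{(-1)},e^{(-2)}$) were assigned by an arbitrary choice. $F$ may match them in the opposite way, which is recorded by the transpositions $E_{\alpha_r}$ and $E_{\beta_s}$ for the affected edges; these make up the sets $I,J$.

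Composing these effects, $\phi$ has the stated normal form. A cautionary check is needed that the induced permutations commute, or at least normalize one another, so that an arbitrary product can be rewritten in the order $A^iB^jC^mD^n\prod E$. For instance $C A C^{-1}$ is again a power of $A$ on the same supports, which suffices.

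The main obstacle is the bookkeeping in the last two steps. One must verify that the effect of orientation reversal and superscript swapping on the letter labels is precisely the explicit permutations $C$, $D$, $E_{\alpha_r}$, $E_{\beta_s}$ under the ordering $\iota$. In particular one must check that reversing an edge converts a same-sign pair into another same-sign pair, and an opposite-sign pair $\{e^{(1)},e^{(-1)}\}$ into itself. I would handle this by tracking a single edge $e$ and its two sides through each elementary move, listing the images of its at most four possible labels, and then matching these against the positions $2r-1$, $2g+2r-3$, $4g+2r-5$, $6g+2r-7$.
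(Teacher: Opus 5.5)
Your strategy is the same as the paper's: restrict a representative $F$ to $G$ and to the complementary disc, read off $\phi=\iota\circ F_*\circ\iota^{-1}$, and decompose it into shifts, sign changes and superscript swaps. The decomposition fails, however, at the step where you absorb a reversal of direction into $C$.

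You correctly note that $F$ may carry $\alpha_1,\dots,\alpha_{g-1}$ onto the edges of $\tilde\alpha$ in \emph{reversed} order. Indices are assigned by following the direction of the curve, so this means $F(\alpha_r)=\tilde\alpha_{c-r}$ (indices mod $g-1$). The relabelling is therefore $\alpha_r^{(k)}\mapsto\tilde\alpha_{c-r}^{(-k)}$, up to the $(1)/(2)$ ambiguity. Now look at the listed permutations. $C$ sends $\alpha_r^{(\pm k)}$ to $\alpha_r^{(\mp k)}$ with the \emph{same} index, and so do the $E$'s; only $A^i$ moves indices, by a uniform rotation. Hence every element $A^iB^jC^mD^n\prod E$ shifts all $\alpha$-indices by one common amount. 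For $g\ge 4$ the reflection $r\mapsto c-r$ of $\mathbb{Z}/(g-1)\mathbb{Z}$ is not a rotation, so your $\phi$ is not in the list. (Equivalently, $CAC^{-1}=A$, whereas a relabelling that reverses the cycle conjugates $A$ to $A^{-1}$.) Nor can you expect another element of the list to work: a conjugator between two $(4g-4)$-cycles is determined on the support up to a power of the cycle. The paper's Case~3 makes exactly the same identification (``each symbol $\alpha_i^{(k)}$ is replaced by $\alpha_i^{(-k)}$''), so following it does not close this gap.

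The repair uses the same reasoning you already applied to exclude swapping $\alpha$ and $\beta$. The list contains no odd/even interchange, so pairs must be ordered. It also contains no index reflection, so pairs must be oriented, as the formula $[\phi]\cdot(\alpha,\beta)=(\phi\circ\alpha,\phi\circ\beta)$ suggests. With that reading:
\begin{itemize}
\item $F$ preserves the directions of $\alpha$ and $\beta$, so the index maps are pure shifts $A^iB^j$.
\item Sign changes come from a source you did not list. If $F|_{P}$ reverses the orientation of the disc, every side changes sign simultaneously. Same-sign pairs of one sign go to same-sign pairs of the other sign, and the two labels of an opposite-sign pair are exchanged; this is exactly $CD$.
\item In that case the boundary word is also read backwards, giving $\sigma^{(\alpha,\beta)}=\phi^{-1}(\sigma^{(\tilde\alpha,\tilde\beta)})^{-1}\phi$ with $m=n=1$.
\item If $F|_P$ preserves orientation, you get the first alternative with $m=n=0$.
\item In both cases the $E$'s absorb the arbitrary $(1)/(2)$ choices.
\end{itemize}
If unoriented curves are intended, the statement itself must be enlarged. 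The reflections $\alpha_r^{(k)}\mapsto\alpha_{-r}^{(k)}$ and $\beta_s^{(k)}\mapsto\beta_{-s}^{(k)}$ would have to be added to the generating set.
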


\begin{proof}
Suppose the filling pairs \(\Gamma_1\) and \(\Gamma_2\) lie in the same
\(\mathrm{Mod}(N_g)\)-orbit. Then there exists a homeomorphism
$
f:N_g\to N_g
$
such that
$
f(\Gamma_1)=\Gamma_2.
$

The filling pair determines a polygonal decomposition of \(N_g\) by a
\((4g-4)\)-gon \(P\), whose boundary word gives rise to the associated filling permutation. The homeomorphism \(f\) induces a simplicial automorphism of \(P\), sending vertices to vertices and edges to edges. Since every simplicial automorphism of a polygon preserves the cyclic ordering of its vertices up to reversal, it is induced by an element of the dihedral group \(D_{4g-4}\). There are two cases to be considered depending on whether $f$ is orientation preserving or reversing.

We first assume that \(f\) preserves the orientation of \(P\). Then the induced change in the filling permutation is entirely determined by relabeling the edges, changing their orientations, and interchanging repeated occurrences of the same symbol. Such a relabeling is generated by the following elementary operations.

\medskip

\noindent
\textbf{Case 1.}
Suppose that
$
f(\alpha_n)=\alpha_{n+k_0}
$
for some \(0\leq k_0\leq g-2\), while
$
f(\beta_m)=\beta_m
$
for all \(m,n\in\{1,2,\dots,g-1\}\). Then the \(\alpha\)-indices are cyclically shifted by \(k_0\), and therefore the induced permutation is conjugation by
$
A^{k_0}.
$

\medskip

\noindent
\textbf{Case 2.}
Suppose that
$
f(\alpha_n)=\alpha_n
$
and
$
f(\beta_m)=\beta_{m+k_0}
$
for some \(0\leq k_0\leq g-1\). Then the \(\beta\)-indices are cyclically shifted by \(k_0\), and hence the induced permutation is conjugation by
$B^{k_0}$
.

\medskip

\noindent
\textbf{Case 3.}
Suppose \(f\) preserves the orientation of the \(\beta\)-curves and reverses the orientation of the \(\alpha\)-curves. Then each symbol
$
\alpha_i^{(k)}
$
is replaced by
$
\alpha_i^{(-k)}.
$
Thus the induced permutation is conjugation by \(C\).

\medskip

\noindent
\textbf{Case 4.}
Suppose \(f\) preserves the orientation of the \(\alpha\)-curves and reverses the orientation of the \(\beta\)-curves. Then each symbol
$
\beta_i^{(k)}
$
is replaced by
$
\beta_i^{(-k)},
$
and therefore the induced permutation is conjugation by \(D\).

\medskip

\noindent
\textbf{Case 5.}
Finally, suppose that \(f\) interchanges the two occurrences of certain edge symbols while fixing all the others. More precisely, for some subsets
$
I,J\subseteq\{1,2,\dots,g-1\},
$
the map \(f\) interchanges
\[
\alpha_r^{(1)}\leftrightarrow\alpha_r^{(2)},
\qquad
\alpha_r^{(-1)}\leftrightarrow\alpha_r^{(-2)}
\]
for each \(r\in I\), and
\[
\beta_s^{(1)}\leftrightarrow\beta_s^{(2)},
\qquad
\beta_s^{(-1)}\leftrightarrow\beta_s^{(-2)}
\]
for each \(s\in J\), while leaving all remaining symbols unchanged. The induced permutation is therefore conjugation by
a product of the involutions
$
E_{\alpha_r}
$ and $
E_{\beta_s}.
$
\medskip

Since every orientation-preserving simplicial automorphism of \(P\) is generated by compositions of the above elementary operations, we conclude that
$
\sigma^{(\alpha,\beta)}
=
\phi^{-1}
\sigma^{(\tilde{\alpha},\tilde{\beta})}
\phi,
$
where
\[
\phi
=
A^{i}B^{j}C^{m}D^{n}
\prod_{r\in I}E_{\alpha_r}^{p_r}
\prod_{s\in J}E_{\beta_s}^{q_s}.
\]
Finally, if \(f\) reverses the orientation of \(P\), then the boundary word is read in the opposite direction. Consequently, the associated filling permutation is replaced by its inverse, and hence
$
\sigma^{(\alpha,\beta)}
=
\phi^{-1}
\left(\sigma^{(\tilde{\alpha},\tilde{\beta})}\right)^{-1}
\phi.
$

This completes the proof.

\end{proof}
\begin{remark}\label{remark:orbit-bound}
Let \(\mathcal{O}\) be a \(\mathrm{Mod}(N_g)\)-orbit of minimally intersecting filling pairs. By Proposition~\ref{3.3}, every filling pair in \(\mathcal{O}\) has a filling permutation obtained from any fixed representative by either taking the inverse or conjugating by a permutation of the form
$
A^{i}B^{j}C^{m}D^{n}
\prod_{r\in I}E_{\alpha_r}^{p_r}
\prod_{s\in J}E_{\beta_s}^{q_s}.
$
Since there are \(g-1\) choices for each of \(i\) and \(j\), two choices each for \(m\) and \(n\), and \(2^{g-1}\) choices for each of the subsets \(I\) and \(J\), it follows that a \(\mathrm{Mod}(N_g)\)-orbit contains at most
\[
2\cdot (g-1)^2\cdot 2^2\cdot 2^{g-1}\cdot 2^{g-1}
=
2^{2g+1}(g-1)^2
\]
distinct filling permutations.
\end{remark}

\subsection{A lower bound via fat graphs}
In this subsection, we study the lower bound on the number of the mapping class group orbits of minimally intersecting filling pairs. Namely, we construct a large family of distinct (up to isotopy) minimal filling pairs in the following way. We define an equivalence relation in the set of isomorphism classes of \(4\)-valent fat graphs with \(g-1\) vertices with two standard cycles. Then we associate each equivalence class with a distinct filling pair. Finally, we deduce the lower bound by dividing the number of the equivalence classes by the possible maximum cardinality of the mapping class group orbit.

Let $\mathcal{G}_g$ denote the set of isomorphism classes of $4$-valent fat graphs with $g-1$ vertices and two standard cycles. Let $G_1,G_2\in\mathcal{G}_g$ be two graphs that are isomorphic as abstract graphs and $\varphi: G_1\to G_2$ be an isomorphism. Now, suppose $V(G_1)=\{v_i\mid 1\leq i\leq g-1\}$ is the vertex set of $G_1$ and  $V(G_2)=\{v_i'\mid v_i'=\varphi(v_i), 1\leq i\leq g-1\}$ is the vertex set of $G_2$.   Let 
$
\sigma_0=\sigma_{0,1}\cdots\sigma_{0,g-1}
$
and
$
\sigma_0'=\sigma_{0,1}'\cdots\sigma_{0,g-1}'
$
be the corresponding vertex permutations of $G_1$ and $G_2$, respectively, where $\sigma_{0,i}$ and $\sigma_{0,i}'$ denote the cyclic permutations at $v_i$ and $v_i'$, respectively.

We say that $G_1$ and $G_2$ are \emph{flip-equivalent} either
$
\sigma_{0,i}'=\sigma_{0,i}
$ or $\sigma_{0,i}'=(\sigma_{0,i})^{-1}$,
for every $1\leq i\leq g-1$,
equivalently, $G_2$ is obtained from $G_1$ by reversing the cyclic ordering at some of the vertices and denoted by $G_1\sim_f G_2$. We denote the set of flip-equivalence classes by
$
\mathcal{G}_g^*:=\mathcal{G}_g/\sim_f.
$
In the following lemma, we prove that the elements of ${\mathcal{G}}_g^*$ give distinct minimal filling pairs.

\begin{lemma}\label{graph-injection}
There exists an injective function
$
\Phi:{\mathcal{G}}_g^*\longrightarrow \mathrm{MF}(N_g),
$ where $\mathrm{MF}(N_g)$ is the set of minimal filling pairs on $N_g$.
\end{lemma}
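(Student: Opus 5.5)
The plan is to define $\Phi$ through the construction in the proof of Theorem~\ref{construction theorem}, after checking that the result does not depend on the choice of representative in a flip class. Then injectivity is read off from the abstract graph recovered from the filling pair, together with its pair of standard cycles. Throughout, minimal filling pairs are regarded up to homeomorphism of $N_g$, as in the paper's counting.

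\textbf{Step 1 (definition).} Given a class $[G]\in\mathcal{G}_g^*$, pick a representative $G$. Apply the edge-twist construction of Section~\ref{section 3} to obtain $N_g^G\cong N_g$ containing $G$ as a minimal filling pair $(\alpha_G,\beta_G)$. Set $\Phi([G])$ to be this pair, transported to $N_g$ by a fixed homeomorphism.

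\textbf{Step 2 (independence of the representative).} The key point is that in a non-orientable surface the cyclic order at a vertex is only defined up to reversal, since there is no global orientation. Reversing the cyclic order at a vertex $v$ amounts to cutting out a small disk around $v$ and regluing it by a reflection. This reflection reverses the local orientation at $v$ and therefore toggles the twist parity on each of the four ribbons incident to $v$.

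\begin{itemize}
\item First observe that the closed surface with its embedded graph is determined, up to homeomorphism, by the graph together with the local rotations and the edge twist signs, modulo this vertex-switching operation.
\item Compare $\Sigma^N(G)$ and $\Sigma^N(G')$ for flip-equivalent $G$ and $G'$. Both have one boundary component, are non-orientable, and have the same Euler characteristic.
\item If one could choose twist sets so that the signed rotation systems correspond under the switching, the resulting pairs would agree.
\end{itemize}

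In general, however, the construction's twist choices are not canonical. I would therefore define $\Phi$ by choosing one fixed representative per class and one fixed twist choice. Well-definedness is then automatic, and all the work moves to injectivity.

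\textbf{Step 3 (injectivity).} Suppose $\Phi([G_1])=\Phi([G_2])$, so there is a homeomorphism $h\colon N_g^{G_1}\to N_g^{G_2}$ carrying $\alpha_{G_1}\cup\beta_{G_1}$ to $\alpha_{G_2}\cup\beta_{G_2}$.

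\begin{itemize}
\item Then $h$ restricts to an abstract graph isomorphism $\varphi\colon G_1\to G_2$ that preserves the pair of standard cycles.
\item At each vertex $v$, $h$ maps a small disk neighbourhood of $v$ homeomorphically onto a neighbourhood of $\varphi(v)$.
\item Such a local homeomorphism sends the cyclic order of the four half-edges at $v$ to the cyclic order at $\varphi(v)$, or to its reverse, depending on whether it preserves or reverses the chosen local orientations.
\end{itemize}

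Hence $\sigma'_{0,i}=\sigma_{0,i}^{\pm1}$ for every $i$, which says exactly that $G_1\sim_f G_2$. So $[G_1]=[G_2]$.

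The main obstacle is making precise that the cyclic orders read off at $\varphi(v_i)$ in $N_g^{G_2}$ coincide, up to inversion, with the fat-graph structure $\sigma_0'$ of $G_2$. This requires the twist construction to leave the vertex disks untouched: twists are inserted only inside edge ribbons, so the vertex disks keep their original rotations. I would verify this directly from the description of the twist as cutting and regluing a ribbon along its core.
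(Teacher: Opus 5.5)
Your proposal is correct and follows the same route as the paper. You define $\Phi$ by applying the construction of Theorem~\ref{construction theorem} to a chosen representative. Injectivity then comes from the fact that a homeomorphism carrying one embedded pair to the other induces a graph isomorphism that preserves the standard cycles and preserves each vertex rotation up to reversal, i.e.\ a flip-equivalence. You also make explicit several points the paper leaves implicit: the choices of representative, twist set and transport to $N_g$, and the fact that the twists leave the vertex disks untouched. Because you work with an arbitrary homeomorphism rather than an isotopy, your argument in fact gives injectivity into $\mathrm{Mod}(N_g)$-orbits.
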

\begin{proof} By Theorem~\ref{construction theorem}, for every
$G\in{\mathcal{G}_g}^*$, there exists a non-orientable surface
$N_g(G)$ homeomorphic to $N_g$ in which the two standard cycles of
$G$ determine a minimal filling pair. Define
$
\Phi(G):=(\alpha_G,\beta_G),
$
where $(\alpha_G,\beta_G)$ denotes this minimal filling pair.

To prove the injectivity of $\Phi$, assume that
$
\Phi(G_1)=\Phi(G_2)
$
for some $G_1,G_2\in{\mathcal{G}_g}^*$. We will prove that
$G_1 =G_2$.
Since $\Phi(G_1)=\Phi(G_2)$, the corresponding minimal
filling pairs are isotopic on $N_g$. Hence there exists an ambient isotopy of $N_g$ sending $\alpha_{G_1}\cup\beta_{G_1}$ onto
$\alpha_{G_2}\cup\beta_{G_2}$ which induces an isomorphism between the graphs $\Phi(G_1)$ and $\Phi(G_2)$. Furthermore, at each vertex, the induced map either preserves or reverses the cyclic ordering of the incident edges, while mapping each standard cycle to the corresponding standard cycle. Consequently, $G_1$ and $G_2$ are flip-equivalent, and hence represent the same element of ${\mathcal{G}}_g^*$.
\end{proof}

In the next step, we estimate the number of flip-equivalence classes. In the following lemma, we derive bounds, both lower and upper, for the cardinality of $\mathcal{G}_g$, from which a lower bound for $|{\mathcal{G}}_g^*|$ will follow immediately.

\begin{lemma}\label{bound of Gg}
    $a_g\leq|\mathcal{G}_g|\leq2^{g-1}(g-2)!$, where $a_g=\frac{2^{g-4}(g-2)!}{(g-1)}.$
\end{lemma}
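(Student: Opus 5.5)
Both bounds come from a permutation encoding of a $4$-valent fat graph with two standard cycles. Fix $\alpha$ as the cycle $\alpha_1\alpha_2\cdots\alpha_{g-1}$ through the vertices $v_1,\dots,v_{g-1}$ in this order, so that $v_i$ is the head of $\alpha_{i-1}$ and the tail of $\alpha_i$ (indices mod $g-1$). The cycle $\beta$ passes through each vertex exactly once, transversally to $\alpha$. Hence the combinatorial data consist of:
\begin{itemize}
\item the cyclic order $\pi$ in which $\beta$ visits the vertices, which we normalize to start at $v_1$ and which gives a permutation of $\{2,\dots,g-1\}$, hence $(g-2)!$ choices;
\item at each vertex, a choice of which side of $\alpha$ the incoming $\beta$-half-edge lies on, in the cyclic order $\sigma_{0,i}$; this is a binary choice, because the $4$-valent cyclic order must alternate $\alpha,\beta,\alpha,\beta$ for the two cycles to cross.
\end{itemize}
The alternation requirement is exactly what makes $\alpha,\beta$ standard cycles.

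For the upper bound, I map each labelled datum $(\pi,\varepsilon)$ with $\varepsilon\in\{\pm1\}^{g-1}$ to a fat graph. I then check that every element of $\mathcal{G}_g$ arises this way. This follows by choosing a starting edge and direction on $\alpha$, labelling as in Section~\ref{section 3}, and reading off $\pi$ and $\varepsilon$. The count is $(g-2)!\cdot 2^{g-1}$. This overcounts isomorphism classes, since relabelings (rotating $\alpha$, reversing orientations) identify data, so it is an upper bound. The one thing to verify carefully is that the local side-data at a vertex really is binary once the alternation is imposed: there are $3$ cyclic orders of the four half-edges at a vertex, of which $2$ alternate.

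For the lower bound, I use the fact that each isomorphism class is hit by at most the number of relabelings of a single graph. Relabelings come from choosing the starting edge of $\alpha$ ($g-1$ choices) and its direction ($2$), possibly also the direction of $\beta$ ($2$), and possibly swapping $\alpha$ and $\beta$ ($2$). That gives at most $8(g-1)$ labelled data per isomorphism class, so
\[
|\mathcal{G}_g|\ \ge\ \frac{2^{g-1}(g-2)!}{8(g-1)}=\frac{2^{g-4}(g-2)!}{g-1}=a_g.
\]
The key claim is that the group generated by these relabelings acts on the set of labelled data, and that two data give isomorphic fat graphs only if they lie in the same orbit. This holds because a fat graph isomorphism preserving the pair of standard cycles is determined by where it sends a directed edge of $\alpha$ together with whether it swaps the cycles. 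The orbit–stabilizer bound then gives the inequality. The degenerate case $g=2$ can be checked by hand.

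The main obstacle is making the encoding precise. In particular, one must confirm that a change of the start of $\beta$'s traversal is absorbed by the normalization $\pi(1)=1$ rather than adding a further factor. One must also confirm that the side-choices $\varepsilon$ transform compatibly under direction reversal, so that the action is well defined. I would first write explicit formulas for how each relabeling transforms $(\pi,\varepsilon)$ and check them against the $g=3$ example in Figure~\ref{fat graph and surface}.
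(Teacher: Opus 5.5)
Your proposal is correct and is essentially the paper's argument: encode $\beta$ by the order in which it visits the vertices $v_1,\dots,v_{g-1}$ of $\alpha$ (with $\beta$ started at $v_1$) together with a binary cyclic-order choice at each vertex, which gives the upper bound $2^{g-1}(g-2)!$, and then divide by the $8(g-1)$ relabelings (starting vertex, the two directions, and the swap $\alpha\leftrightarrow\beta$) to get $a_g$. Two harmless slips: four labelled half-edges admit $6$ cyclic orders, of which exactly $2$ alternate (not ``$3$, of which $2$''); and the lower bound should not rest on orbit--stabilizer, because the relabeling moves, viewed as maps on the set of all data, need not generate a group of order $8(g-1)$ --- instead note that any datum whose fat graph is isomorphic to $G$ is the reading of $G$ under one of its $8(g-1)$ framings (transported by the isomorphism), so each fiber has at most $8(g-1)$ elements.
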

\begin{proof}
Let $G$ be a member of $\mathcal{G}_g$. First, fix one of the standard cycles, call it \(\alpha\). We choose an initial vertex $v_1$ in $\alpha$, then along the direction of $\alpha$ we label the vertices by $v_1,v_2,\dots,v_{g-2}$ and $v_{g-1}$.
We now count the number of possible choices for the second standard cycle \(\beta\). Fix the starting vertex of \(\beta\) at \(v_1\). For the second vertex $v_j$ along $\beta$, there are $g-2$ choices, and there are exactly two choices of cyclic ordering of the edges incident at $v_j$ as in Figure~\ref{Two possible choice of beta}, provided $\alpha$ and $\beta$ are the standard cycles of $G$.
Hence, there are $2(g-2)$ possible choices for the second vertex of \(\beta\). Similarly, there are $2(g-k$ choices for $k$-th vertex along $\beta$. 

\begin{figure} [htbp]
    \centering

\tikzset{every picture/.style={line width=0.75pt}} 

\begin{tikzpicture}[x=0.75pt,y=0.75pt,yscale=-1,xscale=1]

\draw    (93,143.32) -- (304.35,143.32) ;
\draw    (123.38,152.65) .. controls (114.14,43.53) and (259.44,53.58) .. (252.84,151.21) ;
\draw   (177.54,137.63) -- (189.43,142.99) -- (177.54,148.34) ;
\draw   (178.86,68.72) -- (190.75,74.07) -- (178.86,79.43) ;
\draw    (379.65,101.68) -- (591,101.68) ;
\draw   (464.19,95.99) -- (476.08,101.35) -- (464.19,106.71) ;
\draw    (413.99,111.01) .. controls (417.95,46.4) and (461.55,77.99) .. (461.55,94.5) ;
\draw    (462.87,108.14) .. controls (485.32,169.88) and (539.48,138.29) .. (534.2,90.91) ;
\draw [dotted  ,draw opacity=1 ]   (461.55,94.5) -- (462.87,108.14) ;
\draw   (432.49,67.28) -- (444.37,72.64) -- (432.49,77.99) ;

\draw (132.39,153.6) node [anchor=north west][inner sep=0.75pt]  [font=\footnotesize] [align=left] {$\displaystyle v_{1}$};
\draw (233.79,155.04) node [anchor=north west][inner sep=0.75pt]  [font=\footnotesize] [align=left] {$\displaystyle v_{j}$};
\draw (181.95,123.45) node [anchor=north west][inner sep=0.75pt]  [font=\footnotesize] [align=left] {$\displaystyle \alpha $};
\draw (179.31,50.23) node [anchor=north west][inner sep=0.75pt]  [font=\footnotesize] [align=left] {$\displaystyle \beta $};
\draw (468.6,81.82) node [anchor=north west][inner sep=0.75pt]  [font=\footnotesize] [align=left] {$\displaystyle \alpha $};
\draw (396.58,109.09) node [anchor=north west][inner sep=0.75pt]  [font=\footnotesize] [align=left] {$\displaystyle v_{1}$};
\draw (540.25,109.09) node [anchor=north west][inner sep=0.75pt]  [font=\footnotesize] [align=left] {$\displaystyle v_{j}$};
\draw (438.21,45.92) node [anchor=north west][inner sep=0.75pt]  [font=\footnotesize] [align=left] {$\displaystyle \beta $};

\end{tikzpicture}
    \caption{Two possible choice of $\beta$}
    \label{Two possible choice of beta}
\end{figure}
Consequently, the total number of all possibilities is $2^{g-1}(g-2)!.$
It follows that 
\[\vert \mathcal G_g\vert\le 2^{g-1}(g-2)!\]

Now, we deduce the lower bound. First, we change the initial vertex from $v_1$ to $v_i$ for some $1\le i\le g-1.$ The graphs, which we are getting from the above choices, also can be found in the case of taking $v_i$ as first vertex. So there are $ (g-1)$ repetition of the graphs from the above choices since there are $g-1$ vertices and we can start any one of them. Also, we can swap $\alpha$ and $\beta$, and change the direction of $\alpha$ and $\beta$. This operations may also repeat in the total possibilities. Therefore, the total number of distinct isomorphism classes \(4\)-valent fat graphs with \(g-1\) vertices and two standard cycles is at least $\frac{2^{g-1}(g-2)!}{2\cdot 2\cdot2\cdot(g-1)},$ i.e.,
\[\lvert\mathcal{G}_g \rvert\ge \frac{2^{g-1}(g-2)!}{8(g-1)}=\frac{2^{g-4}(g-2)!}{(g-1)}. \]
\end{proof}
\begin{remark}
By Lemma~\ref{bound of Gg}, we have
$
|\mathcal{G}_g|\ge a_g.
$
Each flip-equivalence class contains at most $2^{g-1}$ elements, since at each of the $g-1$ vertices there are at most two choices for the cyclic ordering (the given ordering or its reverse). Therefore,
$
|{\mathcal{G}}_g^*|
\ge
\frac{|\mathcal{G}_g|}{2^{g-1}}
\ge
\frac{a_g}{2^{g-1}}.
$
\end{remark}
Finally, we combine the lower bound of $|{\mathcal{G}}_g^*|$ with the injective map in Lemma~\ref{graph-injection} and the characterization of the mapping class group orbits given in Proposition~\ref{3.3} to deduce the desired lower bound on the number of mapping class group orbits. In particular, we prove the following proposition.
\begin{pro}
    \label{lowerbound}
Let $N(g)$ denote the number of $\mathrm{Mod}(N_g)$-orbits of minimal filling pair on $N_g$, for $g>2$. Then
\[
N(g)\gtrsim f(g),
\]
where
$
f(g)= \frac{e^2\sqrt{2\pi}}{32}(g-1)^{-9/2}
\left(\frac{g-2}{2e}\right)^g.
$
In particular, the number of mapping class group orbits of minimal filling pairs grows super-exponentially with $g$.
\end{pro}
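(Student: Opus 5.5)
The plan is to combine three facts stated above. The first is the injection $\Phi:\mathcal{G}_g^*\to\mathrm{MF}(N_g)$ of Lemma~\ref{graph-injection}. The second is the lower bound $|\mathcal{G}_g^*|\ge a_g/2^{g-1}=\frac{(g-2)!}{8(g-1)}$ from Lemma~\ref{bound of Gg} and the subsequent remark. The third is an upper bound on how many of these pairs can lie in one $\mathrm{Mod}(N_g)$-orbit, which Proposition~\ref{3.3} supplies. Since $\Phi$ is injective, the pairs $\Phi(G)$, $G\in\mathcal{G}_g^*$, are pairwise distinct. It remains to show that their filling permutations separate them, i.e.\ that two distinct elements of $\mathcal{G}_g^*$ give distinct filling permutations. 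I will argue this by noting that the boundary word of the polygon $P(\alpha,\beta)$ recovers the cyclic orders at the vertices up to reversal, hence recovers the flip class. Then $N(g)\ge |\mathcal{G}_g^*|/M_g$, where $M_g$ bounds the number of filling permutations in one orbit.

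For $M_g$, Remark~\ref{remark:orbit-bound} gives $M_g\le 2^{2g+1}(g-1)^2$. Plugging this in yields
\[
N(g)\ge \frac{(g-2)!}{2^{2g+4}(g-1)^3},
\]
which already grows super-exponentially. To reach the sharper constant in $f(g)$, I will instead aim for $M_g\le 2^{g+2}(g-1)^2$. The idea is that the occurrence swaps $E_{\alpha_r},E_{\beta_s}$ are not independent once the images of the $\alpha$- and $\beta$-edges are fixed. An occurrence superscript $(1)/(2)$ versus $(-1)/(-2)$ is determined by the orientation data of the polygon, so only one of the two families $I,J$ contributes a free factor of $2^{g-1}$. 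I expect this to be the main obstacle: I would first check it directly on the boundary-word description of Section~\ref{section 3}. If it fails, I will fall back on the cruder exponent, which still proves super-exponential growth. With the refined bound,
\[
N(g)\ \ge\ \frac{(g-2)!}{8(g-1)}\cdot\frac{1}{2^{g+2}(g-1)^2}=\frac{(g-2)!}{2^{g+5}(g-1)^3}.
\]

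Finally, I will apply Stirling's formula $(g-2)!\sim\sqrt{2\pi(g-2)}\,\bigl(\tfrac{g-2}{e}\bigr)^{g-2}$. This rewrites as $\sqrt{2\pi}\,e^2\,(g-2)^{-3/2}\bigl(\tfrac{g-2}{e}\bigr)^{g}$, and $(g-2)^{-3/2}\sim(g-1)^{-3/2}$. Combining these gives
\[
\frac{(g-2)!}{2^{g+5}(g-1)^3}\sim \frac{e^2\sqrt{2\pi}}{32}(g-1)^{-9/2}\Bigl(\frac{g-2}{2e}\Bigr)^g=f(g).
\]
Since $\bigl(\tfrac{g-2}{2e}\bigr)^g$ dominates $c^g$ for every $c>0$, while the polynomial factor $(g-1)^{-9/2}$ is negligible, the super-exponential growth follows.
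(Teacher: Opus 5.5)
\textbf{Gap: the refined orbit bound $M_g\le 2^{g+2}(g-1)^2$.} Your route is the paper's: the injection $\Phi$, the bound $|\mathcal{G}_g^*|\ge\frac{(g-2)!}{8(g-1)}$, an orbit bound from Proposition~\ref{3.3}, then Stirling. You are right to count filling permutations rather than filling pairs per orbit, since for $g\ge 3$ the orbits in $\mathrm{MF}(N_g)$ are infinite. The gap is the refinement $M_g\le 2^{g+2}(g-1)^2$. This is exactly what you need to reach the base $2e$, and your justification for it does not work.

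The involutions $E_{\alpha_r},E_{\beta_s}$ exchange $(1)\leftrightarrow(2)$ and $(-1)\leftrightarrow(-2)$ and never change the sign. So the fact that the sign is forced by the orientation of $P(\alpha,\beta)$ imposes no constraint on them. Which of two equally directed occurrences is called $(1)$ is an independent binary choice for every edge whose two sides in $P(\alpha,\beta)$ run the same way. Nothing distinguishes $\alpha$-edges from $\beta$-edges in this respect. For example, the octagon word
\[
\alpha_1\,\beta_1\,\alpha_2^{-1}\,\beta_1\,\alpha_1\,\beta_2^{-1}\,\alpha_2^{-1}\,\beta_2^{-1}
\]
glues to $N_3$ with two vertex classes of four corners each. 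Both vertices are transverse crossings of $\alpha=\alpha_1\alpha_2$ and $\beta=\beta_1\beta_2$, so this is a minimal filling pair. All four edges are equally directed. Since no nontrivial rotation preserves this signed word, the $2^4$ choices of superscripts give $16$ distinct $8$-cycles, so $I$ and $J$ both contribute fully. Without the refinement, your fallback only yields
\[
N(g)\ge\frac{(g-2)!}{2^{2g+4}(g-1)^3}\sim\frac{e^2\sqrt{2\pi}}{16}(g-1)^{-9/2}\left(\frac{g-2}{4e}\right)^g\sim 2^{1-g}f(g).
\]
This is super-exponential, but it does not give $N(g)\gtrsim f(g)$.

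\textbf{The paper's proof and a possible fix.} The paper does not supply the missing factor $2^{g-1}$ either. It divides $\frac{(g-2)!}{32(g-1)}$ by the unrefined $2^{2g+1}(g-1)^2$ and records the result as $\frac{(g-2)!}{2^{g+5}(g-1)^3}$. That is an arithmetic slip: the quotient is $\frac{(g-2)!}{2^{2g+6}(g-1)^3}$. So your naive computation is the correct one. Your target coincides with the paper's $F(g)$ only because you reverse-engineered the bound needed to hit it.

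Within the paper's framework, one way to obtain $f(g)$ is to drop the division by $M_g$ altogether. Your observation that $P(\alpha,\beta)$ recovers the flip class gives more than injectivity into filling permutations. Run the argument of Lemma~\ref{graph-injection} with an arbitrary homeomorphism in place of an isotopy. A homeomorphism carrying one pair onto the other restricts to a graph isomorphism and sends a local orientation at each vertex to a local orientation. Hence it preserves each cyclic order up to reversal, so the flip class of $\alpha\cup\beta$ is a $\mathrm{Mod}(N_g)$-invariant. Thus $G\mapsto\mathrm{Mod}(N_g)\cdot\Phi(G)$ is injective on $\mathcal{G}_g^*$, and
\[
N(g)\ge|\mathcal{G}_g^*|\ge\frac{(g-2)!}{8(g-1)}.
\]
This exceeds $F(g)\sim f(g)$ by the factor $2^{g+2}(g-1)^2$.
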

\begin{proof} 
By Lemma~\ref{graph-injection}, there is an injective map
$
\Phi:{\mathcal{G}}_g^*\longrightarrow \mathrm{MF}(N_g).
$
Hence the number of minimal filling pairs on $N_g$ is at least
$|{\mathcal{G}}_g^*|$. By Remark~\ref{bound of Gg}, we have
$
|{\mathcal{G}}_g^*|
\ge
\frac{(g-2)!}{32(g-1)}.
$
By Remark~\ref{remark:orbit-bound}, every
$\mathrm{Mod}(N_g)$-orbit contains at most
$
2^{2g+1}(g-1)^2
$
distinct filling permutations, and hence at most this many minimal
filling pairs. Therefore,
\[
N(g)
\ge
\frac{|{\mathcal{G}}_g^*|}
{2^{2g+1}(g-1)^2}
\ge
\frac{(g-2)!}
{2^{g+5}(g-1)^3}
=:F(g).
\]

Applying Stirling's formula,
$
(g-2)!
\sim
\sqrt{2\pi(g-2)}
\left(\frac{g-2}{e}\right)^{g-2},
$
we obtain
\[
F(g)
=
\frac{e^{2}\sqrt{2\pi}}{32}
\cdot
\frac{(g-2)^{\,g-\frac32}}
{(2e)^g(g-1)^3}
\left(1+O(g^{-1})\right)
\sim
f(g),
\]
tumiHence, we obtain $N(g)\gtrsim f(g).$ 
\end{proof}

\begin{remark}
  In particular, unlike orientable case (see \cite{MR3342680}), the number of mapping class group orbits of minimal filling pairs for $N_g$ grows super-exponentially with $g$. To see this
  \[
\frac{\log f(g)}{g}
=
\log\!\left(\frac{g-2}{2e}\right)
-
\frac{9}{2}\frac{\log(g-1)}{g}
+
\frac{\log C}{g}
\longrightarrow\infty,
\text{  as  } g\to\infty.
\]
 
\end{remark}

\section{Upper bound of the mapping class group orbit}
In this section, our goal is to prove the upper bound in \thmref{bounds}. Here, we first introduce twist-labeled fat graphs and define an operation on them, which we call the \textit{vertex-flip operation}. We then discuss the \(\mathbb{Z}/2\z\)-cohomological interpretation of twist functions and show that the corresponding cohomology classes are invariant under vertex-flip operations. Then we show that the minimal filling pairs can be seen as Twist-labeled fat graphs.  Finally, by counting the twist labeled fat graphs we obtain the desired upper bound in \thmref{bounds}.

\subsection{Twist-labeled fat graph and Vertex flip operation}

In this subsection, we introduce the twist functions and twist-labeled fat graphs which associates with non orientable surface. It helps to construct flip-equivalent fat graphs.
\begin{definition}
A \emph{twist function} on $4$-valent fat graph $G$ is defined by a map
$
\tau:E_1(G)\longrightarrow\z/2\z.
$
The pair of a fat graph $G$ and a twist function $\tau$, i.e., $(G,\tau)$ is called a \emph{twist-labeled fat graph}. 
\end{definition}
 Given a twist-labeled fat graph $(G,\tau)$, we construct a surface $\Sigma(G,\tau)$ from $\Sigma(G)$, associated to $G$ as follows. For each edge $e\in E_1(G)$, we perform a twist along $e$ precisely when $\tau(e)=1$. In other words, the value $\tau(e)$ records whether or not the corresponding edge is twisted in the construction of $\Sigma(G,\tau)$. Thus, the twist function $\tau$ completely determines the twisting data and, consequently, the associated surface $\Sigma(G,\tau)$.

We now define a local operation on twist-labeled fat graphs, called
the \emph{vertex flip operation}, which originally appeared in the context
of ribbon graph duality in  \cite{mulase2003duality}.

\begin{definition}[\textit{Vertex flip}]
Let $(G,\tau)$ be a twist-labeled fat graph and let $v$ be a vertex of
$G$. The \emph{vertex flip} in $v$ is the operation $F_v$ that sends $(G,\tau)$ to the twist-labeled fat graph $(F_v(G),\tau_{v})$
where
\begin{enumerate}

    \item $F_v(G)$ is the fat graph obtained from $G$ by reversing the cyclic ordering of the edges incident to $v$; and
    \item $\tau_v$ is the twist function on $F_v(G)$ defined by
    \[
    \tau_v(e)=
    \begin{cases}
    1-\tau(e), & \text{if } e \text{ is incident to } v,\\
    \tau(e), & \text{otherwise}.
    \end{cases}
    \]
\end{enumerate}
\end{definition}

\begin{figure} [htbp]
    \centering

\tikzset{every picture/.style={line width=0.75pt}} 

\begin{tikzpicture}[x=0.75pt,y=0.75pt,yscale=-1,xscale=1]

\draw    (157.03,30.99) -- (157.03,90.56) ;
\draw    (99.71,90.56) -- (157.03,90.56) ;
\draw    (234.07,90.75) -- (176.75,90.36) ;
\draw    (177.12,30.8) -- (176.75,90.36) ;
\draw    (176.77,170.21) -- (176.93,110.65) ;
\draw    (234.26,110.82) -- (176.93,110.65) ;
\draw    (99.4,110.54) -- (156.73,110.06) ;
\draw    (157.19,169.62) -- (156.73,110.06) ;
\draw    (416.46,61.54) -- (416.46,89.86) ;
\draw    (388.28,90.77) -- (416.46,89.86) ;
\draw    (460.45,90.02) -- (431.33,89.7) ;
\draw    (430.71,62.29) -- (431.33,89.7) ;
\draw    (432.31,130.04) -- (432.23,101.72) ;
\draw    (458.73,101.74) -- (432.23,101.72) ;
\draw    (388.28,102.65) -- (417.36,101.91) ;
\draw    (418.06,129.32) -- (417.36,101.91) ;
\draw    (345.07,103.56) .. controls (368.55,106.3) and (366.67,89.86) .. (388.28,90.77) ;
\draw    (345.07,91.69) .. controls (368.55,94.43) and (366.67,101.74) .. (388.28,102.65) ;
\draw    (458.73,101.74) .. controls (480.33,101.74) and (479.4,88.03) .. (501,88.95) ;
\draw    (459.67,89.86) .. controls (483.15,92.6) and (479.4,100.82) .. (501,101.74) ;
\draw    (416.46,61.54) .. controls (417.4,24.08) and (466.24,13.12) .. (465.3,87.12) ;
\draw    (430.71,62.29) .. controls (431.49,46.01) and (434.31,36.87) .. (440.88,35.04) ;
\draw    (446.52,31.39) .. controls (476.42,31.55) and (477.52,68.85) .. (477.52,83.46) ;
\draw    (418.06,129.32) .. controls (420.22,185.79) and (368.55,192.18) .. (367.61,111.78) ;
\draw    (432.31,130.04) .. controls (433.37,163.4) and (422.09,172.54) .. (406.13,170.71) ;
\draw    (381.7,113.16) .. controls (382.64,141.48) and (389.22,159.75) .. (399.55,167.06) ;
\draw    (252.96,100.53) -- (325.49,101.26) ;
\draw [shift={(327.49,101.28)}, rotate = 180.57] [color={rgb, 255:red, 0; green, 0; blue, 0 }  ][line width=0.75]    (10.93,-3.29) .. controls (6.95,-1.4) and (3.31,-0.3) .. (0,0) .. controls (3.31,0.3) and (6.95,1.4) .. (10.93,3.29)   ;

\end{tikzpicture}
    \caption{Vertex flip operation}
    \label{vertex flip opertation}
\end{figure}
 We say that $(G_1,\tau_1)$ and $(G_2,\tau_2)$ are \emph{flip-equivalent} if one can be obtained from the another by a finite sequence of vertex flips, and write $ (G_1,\tau_1)\sim_f(G_2,\tau_2).$ It is immediate from the definition that, if $ (G_1,\tau_1)\sim_f(G_2,\tau_2)$, then $G_1\sim_f G_2$. 

\subsection{Cohomological interpretation of twist functions}

A $4$-valent graph $G$ can be considered as a CW-complex whose $0$-cells are the vertices and whose
$1$-cells are the edges. The cellular cochain complex with coefficients
in $\mathbb{Z}/2\mathbb{Z}$ is
\[
0 \longrightarrow
C^0(G;\mathbb{Z}/2\mathbb{Z})
\xrightarrow{\;\delta\;}
C^1(G;\mathbb{Z}/2\mathbb{Z})
\longrightarrow 0,
\]
where
$
C^k(G;\mathbb{Z}/2\mathbb{Z})
=
\mathrm{Maps}
\bigl(
\{\text{$k$-cells of $G$}\},
\mathbb{Z}/2\mathbb{Z}
\bigr),
k=0,1.
$
In particular,
$
C^0(G;\mathbb{Z}/2\mathbb{Z})
\cong
(\mathbb{Z}/2\mathbb{Z})^{V(G)}
\text{ and }
C^1(G;\mathbb{Z}/2\mathbb{Z})
\cong
(\mathbb{Z}/2\mathbb{Z})^{E_1(G)}.
$ Now we define the coboundary map $\delta$.
\begin{definition}[The coboundary map \texorpdfstring{$\delta$}{delta}]\label{def:delta}
For $f \in C^0(G;\,\mathbb{Z}/2\mathbb{Z})$ and an edge $e$ with
tail $\partial_0 e$ and head $\partial_1 e$, define
\begin{align*}
    (\delta f)(e)
  &= f(\partial_1 e) - f(\partial_0 e)
 \mod 2\\ &=f(\partial_1 e) + f(\partial_0 e)\mod 2.
\end{align*}
  
\end{definition}

In words, $(\delta f)(e) = 1$ if and only if $f$ takes \emph{different}
values on the two endpoints of $e$, otherwise $(\delta f)(e) = 0.$ Now we can see the vertex flip operation will not change the cohomology.

A twist function
$
\tau:E_1(G)\longrightarrow \z/2\z
$
may naturally be regarded as a cellular $1$-cochain with coefficients in
$\mathbb{Z}/2\mathbb{Z}$. Thus the set of all twist functions on $G$ is
precisely the cochain group
$
C^1(G;\mathbb{Z}/2\mathbb{Z}).
$
\(F_v\) is a function from the set of all twist-labeled fat graphs to itself, defined by
\[
(G,\tau)\longmapsto \bigl(F_v(G),\tau_v\bigr).
\]
Since the vertex-flip operation does not change the edge set, the edges of \(G\) and \(F_v(G)\) are the same. Consequently, we have a natural identification
\[
C^1(G;\mathbb{Z}/2\mathbb{Z})
\cong
C^1(F_v(G);\mathbb{Z}/2\mathbb{Z}).
\]
Under this identification, \(F_v\) induces a map
\[
F_v^*\colon C^1(G;\mathbb{Z}/2\mathbb{Z})
\longrightarrow C^1(G;\mathbb{Z}/2\mathbb{Z}),
\]
given by
\[
\tau\longmapsto F_v^*(\tau)=\tau_v.
\]
Thus, we may regard the vertex-flip operation as inducing an action on the space of \(1\)-cochains of \(G\).

\begin{lemma}\label{flip-cobdy}
A vertex-flip at vertex $v$ corresponds exactly to replacing $\tau$ by
$\tau + \delta f_v$, where $f_v \in C^0(G;\,\mathbb{Z}/2\mathbb{Z})$
is the characteristic function of $v$:
\[
  f_v(u) =
  \begin{cases}
    1 & u = v,\\
    0 & u \neq v.
  \end{cases}
\]
More generally, a sequence of vertex-flips at a subset $S \subseteq V(G)$
replaces $\tau$ by $\tau + \delta f_S$, where $f_S$ is the indicator
of $S$.
\end{lemma}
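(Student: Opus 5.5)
The plan is to compute $\delta f_v$ edge by edge directly from Definition~\ref{def:delta} and compare it with the formula for $\tau_v$ in the definition of the vertex flip. Then we pass from single flips to arbitrary subsets using linearity of $\delta$ over $\mathbb{Z}/2\mathbb{Z}$.

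For a single vertex $v$ and an edge $e$ with endpoints $\partial_0 e,\partial_1 e$, we have $(\delta f_v)(e)=f_v(\partial_0 e)+f_v(\partial_1 e) \bmod 2$. This value is $1$ exactly when exactly one endpoint of $e$ equals $v$, that is, when $e$ is incident to $v$ but is not a loop at $v$. It is $0$ when $e$ is not incident to $v$. Since $1-\tau(e)=\tau(e)+1$ in $\mathbb{Z}/2\mathbb{Z}$, the definition of $\tau_v$ reads $\tau_v=\tau+\chi_v$, where $\chi_v$ is the indicator of edges incident to $v$. So the first claim reduces to $\chi_v=\delta f_v$.

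The main obstacle is loops. A $4$-valent fat graph with two standard cycles may have an edge whose two ends are both at $v$; for instance, when $g=2$ both edges are loops. For such an edge, $\delta f_v(e)=1+1=0$, while the literal definition gives $\tau_v(e)=1-\tau(e)$. I would resolve this by interpreting the flip geometrically. Reversing the cyclic order at $v$ amounts to re-embedding the disk at $v$ with a half-twist, which inserts one half-twist in each half-edge at $v$. A loop at $v$ therefore receives two half-twists, which cancel, so the correct rule toggles $\tau(e)$ once per endpoint at $v$, counted with multiplicity. With incidence counted this way, the identity $\tau_v=\tau+\delta f_v$ holds for all edges. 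I would state this convention explicitly, noting that it is forced if the flip is to preserve the surface $\Sigma(G,\tau)$. In the case without loops, the identity holds verbatim.

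For a subset $S\subseteq V(G)$, flips at distinct vertices commute, since each changes only the cyclic order at its own vertex and adds a cochain to $\tau$. Applying them in any order therefore gives $\tau+\sum_{v\in S}\delta f_v=\tau+\delta\bigl(\sum_{v\in S} f_v\bigr)=\tau+\delta f_S$, because $\delta$ is a homomorphism and $f_S=\sum_{v\in S}f_v$ in $C^0(G;\mathbb{Z}/2\mathbb{Z})$. Flipping the same vertex twice returns the original data, since $2\delta f_v=0$ and the cyclic order is reversed twice. So any finite sequence of flips reduces to flipping the set $S$ of vertices used an odd number of times. This also shows that the cohomology class $[\tau]\in H^1(G;\mathbb{Z}/2\mathbb{Z})$ is invariant under vertex flips.
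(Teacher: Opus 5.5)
Your proof is correct and follows the same route as the paper: you compute $(\delta f_v)(e)=f_v(\partial_0 e)+f_v(\partial_1 e)$ edge by edge, match it with the toggling rule that defines $\tau_v$, and pass to a subset $S$ by additivity of $\delta$; your remarks that flips commute and that a double flip cancels make the paper's ``apply finitely many times'' precise. Your handling of loops is a real improvement, not a gap: the paper's proof silently equates ``exactly one endpoint of $e$ is $v$'' with ``$e$ is incident to $v$'', which fails for a loop at $v$ (this happens for $g=2$, where both edges are loops), and your rule of toggling once per half-edge at $v$ is exactly what makes $\tau_v=\tau+\delta f_v$ true and what keeps $\Sigma(G,\tau)$ unchanged (under the literal rule, flipping the untwisted one-vertex graph for $g=2$ would turn a one-holed torus into a non-orientable surface).
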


\begin{proof}
For any edge $e\in E$,
by Definition~\ref{def:delta}, $(\delta f_v)(e) = 1$ if and only if
exactly one endpoint of $e$ is $v$, i.e.\ $e$ is incident to $v$.
Adding $\delta f_v$ to $\tau$ makes a change in the value of $\tau(e)$ 
for the edges incident to $v$, which is exactly the effect of a vertex-flip
at $v$.  This proves our first assertation. 
The general statement follows by applying the vertex flip operations finitely many times. 
\end{proof}

Now, we say that two twist functions $\tau_1$ and $\tau_2$ on a $4$-valent fat graph $G$ are \emph{flip-equivalent} if there exist fat graphs $G_1$ and $G_2$ with $G_1\sim_f G_2$ such that the twist-labeled fat graphs $(G_1,\tau_1)$ and $(G_2,\tau_2)$ are flip-equivalent.

Note that a vertex flip changes the twist function by a coboundary. This leads to the following characterization of flip-equivalence in terms of cohomology classes. That means, if $B^1(G;\mathbb{Z}/2\mathbb{Z})=Im(\delta)$, $F_v$ preserves $B^1(G;\mathbb{Z}/2\mathbb{Z})$ for all vertex $v.$

\begin{lemma}\label{flip-cohomology}
Let $G$ be a $4$-valent fat graph and let
$\tau_1,\tau_2\in C^1(G;\mathbb Z/2\mathbb Z)$ be two twist cochains.
Then $\tau_1$ and $\tau_2$ are flip-equivalent if and only if they
represent the same cohomology class in
$H^1(G;\mathbb Z/2\mathbb Z)$.

\end{lemma}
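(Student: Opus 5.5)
We prove the two implications separately, with Lemma~\ref{flip-cobdy} as the main tool. First we fix the meaning of the statement. Vertex flips never change the underlying abstract graph or its edge set. So every fat graph flip-equivalent to $G$ has the same $1$-cochain group $C^1(G;\mathbb Z/2\mathbb Z)$, the same coboundary map $\delta$, and hence the same group $H^1(G;\mathbb Z/2\mathbb Z)=C^1/\operatorname{Im}\delta$. Indeed, $\delta$ depends only on incidences, and the $1$-dimensional complex has no $2$-cells, so every cochain is a cocycle. Thus it makes sense to compare the classes of $\tau_1$ and $\tau_2$ in one group, even when $\tau_1,\tau_2$ are attached to different flip-equivalent fat graphs $G_1\sim_f G_2$.

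For the forward implication, suppose $(G_1,\tau_1)$ is carried to $(G_2,\tau_2)$ by vertex flips at $v_1,\dots,v_r$, with repetitions allowed. By Lemma~\ref{flip-cobdy}, each flip adds $\delta f_{v_i}$. Since $\delta$ is linear over $\mathbb Z/2\mathbb Z$, we get
\[
\tau_2=\tau_1+\sum_i\delta f_{v_i}=\tau_1+\delta f_S,
\]
where $S$ is the set of vertices flipped an odd number of times. Hence $[\tau_1]=[\tau_2]$.

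For the reverse implication, suppose $\tau_2-\tau_1=\delta f$ for some $f\in C^0(G;\mathbb Z/2\mathbb Z)$. Let $S=f^{-1}(1)$, so that $f=f_S=\sum_{v\in S}f_v$. Starting from $(G,\tau_1)$, perform one vertex flip at each $v\in S$, in any order. By Lemma~\ref{flip-cobdy}, the resulting twist function is $\tau_1+\delta f_S=\tau_2$. The resulting fat graph is $G'=F_S(G)$, which is flip-equivalent to $G$ by construction. So taking $G_1=G$ and $G_2=G'$ witnesses the flip-equivalence of $\tau_1$ and $\tau_2$ in the sense of the definition.

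The main point to handle carefully is that flips commute and are involutive. Reversing a cyclic order twice gives back the original order, and flips at distinct vertices act on disjoint cyclic orders. Consequently, the fat graph obtained depends only on $S$, and the twist function depends only on $f_S$. This also means the definition's requirement that $G_1\sim_f G_2$ is automatically satisfied in the reverse implication, and is not an extra constraint. One should also note that the cochain $\tau_2$ is interpreted via the identification $C^1(G)\cong C^1(F_S(G))$ fixed before Lemma~\ref{flip-cobdy}; with that identification the argument is purely linear algebra over $\mathbb Z/2\mathbb Z$.
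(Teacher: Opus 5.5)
Your proposal is correct and follows the paper's proof essentially step for step. The forward direction adds up the coboundaries $\delta f_{v_i}$ supplied by Lemma~\ref{flip-cobdy}, and the converse realizes $\tau_2-\tau_1=\delta f$ by flipping once at each vertex of $f^{-1}(1)$; your parity bookkeeping and your remarks on commutativity, involutivity and the identification of cochain groups across flips make explicit what the paper leaves implicit.

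One caveat applies equally to the paper's proof. Lemma~\ref{flip-cobdy} tacitly requires that a flip at $v$ not toggle the twist of a loop at $v$: $\delta f_v$ vanishes on loops, whereas the literal definition of $\tau_v$ toggles every edge incident to $v$. This is harmless when $G$ has no loops, as for the graphs $\alpha\cup\beta$ with $g\ge 3$. For a general $4$-valent fat graph, however, it should be made explicit.
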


\begin{proof}
Suppose that $\tau_1$ and $\tau_2$ are flip-equivalent. Then there exists a sequence of vertex-flips transforming
$(G,\tau_1)$ into a twist-labeled fat graph equivalent to
$(G',\tau_2)$, where $G$ and $G'$ are flip-equivalent. So, they are same as graphs. 
By Lemma~\ref{flip-cobdy}, A sequence of vertex-flips replaces $\tau_1$ by
$
\tau_1+\delta f
$
for some $f\in C^0(G;\mathbb Z/2\mathbb Z)$.
Since vertex-flips do not alter the underlying edge set of $G$, we obtain
$
\tau_2=\tau_1+\delta f.
$
Therefore
$
[\tau_1]=[\tau_2]
$
in $H^1(G;\mathbb Z/2\mathbb Z)$.

Conversely, suppose that
$
[\tau_1]=[\tau_2].
$
Then
$
\tau_2-\tau_1\in B^1(G;\mathbb Z/2\mathbb Z),
$
so there exists
$f\in C^0(G;\mathbb Z/2\mathbb Z)$ such that
$
\tau_2-\tau_1=\delta f.
$
By Lemma~\ref{flip-cobdy}, the cochain $\delta f$ is realized by a
sequence of vertex-flips.
Thus $\tau_1$ and $\tau_2$ are flip-equivalent.
\end{proof}

\subsection{Minimal filling pair as twist-labeled fat graph} \label{Minimal filling pair as twist-labeled fat graph}
We now associate to each minimal filling pair on $N_g$ a twist-labeled fat graph, thereby obtaining a combinatorial description of minimal filling pairs. Define
\[
\widehat{\mathcal{G}_g}=
\left\{
(G,\tau)
\;\middle|\;
\begin{array}{l}
G\in \mathcal{G}_g,
\tau\in C^1(G,\z/2\z) \\
\left[\tau\right]\neq0,\text{ and } \\
\Sigma(G,\tau)\text{ has a single boundary component}
\end{array}
\right\}.
\]
By the construction of Section~\ref{section 3}, every element of $\mathcal{G}_g^{\mathrm{tw}}$ determines a minimal filling pair on $N_g$. Conversely, let $(\alpha,\beta)$ be a minimal filling pair on $N_g$.
Then the union
$
G=\alpha\cup\beta
$
is a $4$-valent graph with $g-1$ vertices. Since every surface is locally orientable, the local orientation at each vertex induces a cyclic order on the incident edges. Hence, $G$ becomes a $4$-valent fat graph whose two standard cycles are precisely $\alpha$ and $\beta$. Define the twist function
$
\tau:E_1(G)\longrightarrow \z/2\z
$
by setting $\tau(e)=1$ if the corresponding segment of $\alpha\cup\beta$
passes through an odd number of cross-caps, and $\tau(e)=0$ otherwise.
Thus, $(\alpha,\beta)$ determines a twist-labeled fat graph $(G,\tau)$.Since $(\alpha,\beta)$ fills $N_g$ minimally, $\Sigma(G,\tau)$ has a single
boundary component. Moreover, $\Sigma(G,\tau)$ is non-orientable and, hence
$[\tau]\neq0$. Therefore, each minimal filling pair determines an element
of $\widehat{\mathcal{G}_g}$. However, this correspondence is not bijective:
an isotopy of $(\alpha,\beta)$ may change the induced cyclic orderings at
the vertices, and hence may produce a different fat-graph representative.

\begin{definition}
The twist-labeled fat graphs $(G_1,\tau_1)$ and $(G_2,\tau_2)$
are mapping class group equivalent if there exists a homeomorphism
$
f:\Sigma(G_1,\tau_1)\to \Sigma(G_2,\tau_2)
$
such that $f$ sends each standard cycle of $G_1$ to
standard cycle of $G_2$. It is denoted by $(G_1,\tau_1)\sim_M(G_2,\tau_2)$

\end{definition}
The above notion of mapping class group equivalence is consistent with the
usual action of the mapping class group on minimal filling pairs. Indeed, if
$(G_1,\tau_1)$ and $(G_2,\tau_2)$ are mapping class group equivalent, then
the associated minimal filling pairs lie in the same mapping class group
orbit, since the homeomorphism carries the standard cycles of
$(G_1,\tau_1)$ to those of $(G_2,\tau_2)$. 
Conversely, suppose that two minimal filling pairs
$(\alpha_1,\beta_1)$ and $(\alpha_2,\beta_2)$ lie in the same mapping
class group orbit. The twist-labeled fat graphs
$(G_1,\tau_1)$ and $(G_2,\tau_2)$ are associated with
$(\alpha_1,\beta_1)$ and $(\alpha_2,\beta_2)$, respectively. Then there
exists a homeomorphism
$
f:N_g\longrightarrow N_g
$
sending $(\alpha_1,\beta_1)$ to $(\alpha_2,\beta_2)$. Restricting $f$ to
the regular neighborhoods of the filling pairs yields a homeomorphism
$\Sigma(G_1,\tau_1)$ to $\Sigma(G_2,\tau_2),$
which sends the standard cycles of $G_1$ to those of $G_2$. Hence
$(G_1,\tau_1)$ and $(G_2,\tau_2)$ are mapping class group equivalent.

Let $\widehat{\mathcal{G}_g}/\sim_M$
denote the set of mapping class group equivalence classes of
$\widehat{\mathcal{G}_g}$. The preceding discussion gives a bijection between $\widehat{\mathcal{G}_g}/\sim_M$ and the mapping class group orbits of minimal filling pairs on $N_g.$
Therefore,
$
|\widehat{\mathcal{G}_g}/\sim_M| = N(g).
$
\begin{lemma}\label{flip eq vs mcg eq}
Let $(G_1,\tau_1),(G_2,\tau_2)\in\widehat{\mathcal{G}_g}$ be flip-equivalent.  Then they are mapping class group equivalent.
\end{lemma}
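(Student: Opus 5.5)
By definition, flip-equivalence is generated by single vertex flips, and $\sim_M$ is transitive: compose the homeomorphisms, noting that standard cycles go to standard cycles at each stage. So I will reduce to the case $(G_2,\tau_2)=(F_v(G_1),\tau_v)$ for a single vertex $v$. Transitivity is needed here because the intermediate twist-labeled fat graphs need not lie in $\widehat{\mathcal{G}_g}$. The lemma will therefore follow once I know that $\Sigma(G,\tau)$ and $\Sigma(F_v(G),\tau_v)$ are homeomorphic by a map carrying the standard cycles to the standard cycles, for any twist-labeled fat graph.

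For the single flip I will build the homeomorphism explicitly, using the decomposition of $\Sigma(G,\tau)$ into a disk $D_u$ for each vertex $u$ and a band $R_e$ for each edge $e$. Each band $R_e$ is attached to its end disks either untwisted or with a half-twist, according to $\tau(e)$. I then define $h:\Sigma(G,\tau)\to\Sigma(F_v(G),\tau_v)$ piece by piece.
\begin{enumerate}
\item On $D_u$ for $u\neq v$, and on $R_e$ for edges $e$ not incident to $v$, the map $h$ is the identity.
\item On $D_v$, the map $h$ is a reflection of the disk. It fixes the vertex point and reverses the cyclic order of the attaching arcs, so it carries the cyclic order $\sigma_{0,v}$ to $\sigma_{0,v}^{-1}$, which is exactly the ordering at $v$ in $F_v(G)$.
\item On a band $R_e$ with $e$ incident to $v$, the end at $D_v$ now meets $D_v$ through the reflection, so the gluing picks up one extra orientation reversal. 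Toggling the half-twist, $\tau(e)\mapsto 1-\tau(e)$, undoes this reversal, so $h$ extends over $R_e$ by the identity in the band coordinates. A loop at $v$ has both ends reflected; I will treat that case separately below.
\end{enumerate}
In a $4$-valent graph coming from a filling pair, each vertex lies on both $\alpha$ and $\beta$. The reflection on $D_v$ is chosen to fix the two crossing arcs of $\alpha$ and $\beta$ through $v$ (the reflection across the diagonal bisecting them). Then $h$ maps the core graph $G$ identically onto $F_v(G)$ as abstract graphs, and in particular sends each standard cycle to the corresponding standard cycle.

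The main obstacle is checking the gluings in step 3 carefully, including the loop case where an edge has both endpoints at $v$. For a loop, both attaching ends are reflected, so the two reversals cancel. The definition gives $\tau_v(e)=1-\tau(e)$ for an incident edge, but Lemma~\ref{flip-cobdy} gives $(\delta f_v)(e)=0$ for a loop, and these disagree. I would first verify the local model for an ordinary edge using the figure of the vertex flip operation. For loops I would adopt the cohomological reading, $\tau\mapsto \tau+\delta f_v$, so that loops are not toggled, which is consistent with Lemma~\ref{flip-cobdy}. With that convention the construction gives a homeomorphism in all cases.

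Finally, I will check that single boundary components and non-orientability are automatically preserved, since $h$ is a homeomorphism. Hence the equivalence stays compatible with $\widehat{\mathcal{G}_g}$ at the endpoints, which completes the argument.
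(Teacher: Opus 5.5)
Your proposal is correct and follows essentially the same route as the paper: reduce to a single vertex flip, then realize it by a homeomorphism supported near $v$ (a reflection of $D_v$ together with toggling the half-twists on the incident bands) that is the identity on $G$ and so carries standard cycles to standard cycles; you simply make explicit the gluing check that the paper leaves to its figure. The loop issue you raise is genuine but does not arise for $g\ge 3$: each standard cycle passes exactly once through each of the $g-1\ge 2$ vertices, so $G$ has no loops and the paper's $\tau_v$ coincides with $\tau+\delta f_v$; your convention is only needed in the degenerate case $g=2$.
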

\begin{proof}
    It is enough to prove the statement when $(G_2,\tau_2)$ is obtained from $(G_1,\tau_1)$ by a single vertex flip. 
Let $v$ be the vertex where the flip is performed. As illustrated in Figure~\ref{vertex flip opertation},
the vertex flip is realized by a homeomorphism between suitable
neighborhoods of $v$ in $\Sigma(G_1,\tau_1)$ and $\Sigma(G_2,\tau_2)$,
which preserves the two standard cycles. Outside these neighborhoods, the
two surfaces and their standard cycles agree, so this extends to a homeomorphism
$
f:\Sigma(G_1,\tau_1)\longrightarrow\Sigma(G_2,\tau_2)
$
preserving standard cycles.
\end{proof}
\begin{remark}\label{vertexflip is subset of mcg}

Let $\mathcal{V}_g:=\widehat{\mathcal{G}_g}/\sim_f$. By Lemma~\ref{flip eq vs mcg eq}, if two elements of $\mathcal{G}$ are
flip-equivalent, then the corresponding minimal filling pairs belong to
the same mapping class group orbit. Hence every flip equivalence
class is contained in a single mapping class group orbit.
Therefore, the map that sends each
flip equivalence class to the corresponding mapping class group orbit is
well-defined. In particular,
$
N(g)\leq |\mathcal{V}_g|.
$

\end{remark}
\subsection{Computing upper bound}
Here we estimate the upper bound of the cardinality of $\mathcal{V}_g$. By Lemma~\ref{bound of Gg}, we have
$
|\mathcal{G}_g|\leq 2^{g-1}(g-2)!.
$
For each $G\in\mathcal{G}_g$, a twist function is a $1$-cochain
$
\tau\in C^1(G;\mathbb{Z}/2\mathbb{Z}).
$
Since $G$ has $2(g-1)$ edges, we have
$
\left|C^1(G;\mathbb{Z}/2\mathbb{Z})\right|
=2^{2(g-1)}.
$
Therefore,
\[
\left|\left\{(G,\tau):
G\in\mathcal{G}_g,\,
\tau\in C^1(G;\mathbb{Z}/2\mathbb{Z})
\right\}\right|
\leq 2^{3g-3}(g-2)!.
\]

Now, for a fixed $G\in\mathcal{G}_g$, the condition $[\tau]=0$ is
equivalent to $\tau\in B^1(G;\mathbb{Z}/2\mathbb{Z})$. Moreover,
\[
\dim {B^1(G,\z/2\z)}
=
\dim C^0(G;\mathbb Z/2\mathbb Z)-1
=
|V(G)|-1
=
g-2.
\]
Consequently,
$
|B^1(G;\mathbb{Z}/2\mathbb{Z})|=2^{g-2}.
$ Therefore, the number of twist functions $\tau\in
C^1(G;\mathbb{Z}/2\mathbb{Z})$ representing a nontrivial cohomology
class is $2^{2(g-1)}-2^{(g-2)}$. Finally, we get,
\[
|\widehat{\mathcal{G}_g}|\le |\{(G,\tau): G\in \mathcal{G}_g;
\tau\in C^1(G,\z/2\z) ,
\left[\tau\right]\neq0\}\le2^{g-1}(g-2)!(2^{2(g-1)}-2^{(g-2)})
\]

Now, let $[(G,\tau)]\in\mathcal{V}_g$. Each flip-equivalence
class contains at most $2^{g-1}$ elements of
$\widehat{\mathcal{G}_g}$. On the other hand, in the above estimate for
$\left|\widehat{\mathcal{G}_g}\right|$, all these equivalent
representatives have been counted separately. Hence,
\[
|\mathcal{V}_g|
\leq
\frac{
2^{g-1}(g-2)!
\left(2^{2(g-1)}-2^{g-2}\right)
}{
2^{g-1}
}.
\]
By Remark~\ref{vertexflip is subset of mcg}, we have
$
N(g)\leq |\mathcal{V}_g|.
$
Consequently,
\[
N(g)
\leq
\left(2^{2(g-1)}-2^{g-2}\right)(g-2)!.
\]
\begin{remark}
In the above counting, we have not imposed the condition that
$\Sigma(G,\tau)$ has a single boundary component. Incorporating this
condition into the counting may lead to a sharper upper bound and could
potentially yield the asymptotic behavior of $N(g)$.
\end{remark}

\section{Length of filling pairs}
In this section, we study the hyperbolic length of minimal filling pairs on \(N_g\). We first determine the minimum value for the total length of minimal filling pairs. We then investigate the equality case and the corresponding subset of the moduli space, relating it to the mapping class group orbits of minimal filling pairs. Finally, we show that the same lower bound holds for arbitrary filling pairs, not only for minimal ones.

Let \(N_g\) be a hyperbolic non-orientable surface of genus \(g\geq 3\),
and let \(\mathrm{MF}(N_g)\) denote the set of all minimal filling pairs
on \(N_g\). Let \(\mathcal{M}_g\) be the moduli space of \(N_g\).
For \(X\in\mathcal{M}_g\) and
\((\alpha,\beta)\in\mathrm{MF}(N_g)\), define the length of
\((\alpha,\beta)\) with respect to \(X\) by
\[
\ell_X(\alpha,\beta)
=
\ell_X(\alpha)+\ell_X(\beta),
\]
where \(\ell_X(\alpha)\) and \(\ell_X(\beta)\) denote the lengths of the
geodesic representatives of \(\alpha\) and \(\beta\), respectively,
with respect to the hyperbolic metric of \(X\).

We now define a function
$
\mathcal{F}_g:\mathcal{M}_g\to\mathbb{R}
$
by
$
\mathcal{F}_g(X)
=
\min_{(\alpha,\beta)\in\mathcal{U}_g}
\ell_{X}(\alpha,\beta).
$
Thus, \(\mathcal{F}_g(X)\) is the minimum total length of a minimal
filling pair on \(N_g\) with respect to the hyperbolic metric \(X\).
The following lemma determines the minimum value of this function.
\begin{lemma}
Let \(g\geq 3\). Then, for every \(X\in\mathcal{M}_g\),
$
\mathcal{F}_g(X)\geq m_g,
$
where
\[
m_g=(4g-4)\operatorname{arccosh}
\left(
\sqrt{2}\cos\frac{\pi}{4g-4}
\right).
\]
\end{lemma}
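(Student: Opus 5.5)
We follow the Aougab--Huang strategy for the orientable case: reduce to an isoperimetric problem for a single hyperbolic polygon. Fix \(X\in\mathcal{M}_g\) and a minimal filling pair \((\alpha,\beta)\in\mathrm{MF}(N_g)\), and replace \(\alpha,\beta\) by their geodesic representatives on \(X\). Geodesics are automatically in minimal position, so they meet in exactly \(i(\alpha,\beta)=g-1\) points. By Lemma~\ref{minimally intersecting}, the complement \(X\setminus(\alpha\cup\beta)\) is a single open disk. Cutting \(X\) along \(\alpha\cup\beta\) therefore yields a convex hyperbolic polygon \(P\) with geodesic sides. As computed in the construction of the filling permutation, \(P\) has \(4g-4\) sides, because each of the \(2g-2\) edges of the graph contributes two sides. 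Each edge appears twice on \(\partial P\), so
\[
\operatorname{perimeter}(P)=2\bigl(\ell_X(\alpha)+\ell_X(\beta)\bigr).
\]
Since \(P\) is the complement of a graph in \(X\), Gauss--Bonnet gives
\[
\operatorname{Area}(P)=\operatorname{Area}(X)=-2\pi\chi(N_g)=2\pi(g-2).
\]
The orientability of \(X\) plays no role in this step, because the area and perimeter of \(P\) are intrinsic to \(P\).

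The key step is the classical isoperimetric inequality for hyperbolic polygons: among all hyperbolic \(n\)-gons of fixed area, the regular \(n\)-gon has the least perimeter. We apply it with \(n=4g-4\) and area \(2\pi(g-2)\). The angle sum of \(P\) is
\[
(n-2)\pi-\operatorname{Area}(P)=(4g-6)\pi-(2g-4)\pi=2\pi(g-1).
\]
So the comparison regular polygon \(P_{\mathrm{reg}}\) has all interior angles equal to \(2\pi(g-1)/(4g-4)=\pi/2\). This is consistent with the local picture, since at each of the \(g-1\) transverse intersection points four right angles meet.

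Next we compute the side length \(s\) of the regular \(n\)-gon with interior angles \(\pi/2\). Split it into \(2n\) right triangles with angles \(\pi/n\) at the centre and \(\pi/4\) at a vertex, and with side \(s/2\) opposite the central angle. The right-triangle relation
\[
\cosh(\text{side opposite }\theta)=\frac{\cos\theta}{\sin\varphi},
\]
where \(\theta\) and \(\varphi\) are the two non-right angles, gives
\[
\cosh\frac{s}{2}=\frac{\cos(\pi/n)}{\sin(\pi/4)}=\sqrt{2}\cos\frac{\pi}{4g-4}.
\]
We now combine this with the isoperimetric inequality and the perimeter identity. As stated the lemma reads \(\mathcal{F}_g(X)\ge(4g-4)\operatorname{arccosh}(\sqrt2\cos\frac{\pi}{4g-4})\), which means \(2\bigl(\ell(\alpha)+\ell(\beta)\bigr)\ge n s = 2n\cdot\frac{s}{2}\). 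Hence
\[
\ell_X(\alpha)+\ell_X(\beta)\ge n\cdot\frac{s}{2}=(4g-4)\operatorname{arccosh}\Bigl(\sqrt2\cos\frac{\pi}{4g-4}\Bigr)=m_g.
\]
The factor of \(2\) from double counting is exactly absorbed by the half-side in the arccosh. We then take the minimum over all \((\alpha,\beta)\).

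We expect the main obstacle to be justifying the isoperimetric input in the precise form needed. The issue is that \(P\) is only an abstract polygon, since its vertices are identified in \(X\). It is convex with geodesic sides, but its angles are not a priori equal. We would first try to cite the Bezdek/Porti-type result that a regular polygon minimizes perimeter among hyperbolic \(n\)-gons of given area. The fallback is a direct symmetrization (Steiner-type) argument on \(P\), which shows that equalizing adjacent sides at fixed area decreases the perimeter. A second check is that the \(4g-4\) sides of \(P\) are genuinely distinct geodesic segments, so that no degenerate angle of \(\pi\) occurs. This holds because every vertex of the graph is a transverse crossing.
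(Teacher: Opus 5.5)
Your proof is correct and follows essentially the same route as the paper: cut $X$ along the geodesic representatives to get a single $(4g-4)$-gon of area $2\pi(g-2)$, apply Bezdek's isoperimetric inequality, and take half the perimeter of the right-angled regular $(4g-4)$-gon. Your version is slightly more careful than the paper's in two places. You attribute the right angles to the comparison regular polygon rather than to $P$ itself. You also derive $\cosh(s/2)=\sqrt{2}\cos\frac{\pi}{4g-4}$ explicitly instead of citing a standard formula.
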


\begin{proof} Let \(X\in\mathcal{M}_g\) and let \((\alpha,\beta)\) be a minimal filling pair on \(X\) whose length is minimum.
Since \((\alpha,\beta)\) is a minimal filling pair on \(N_g\), cutting \(X\) along \(\alpha\cup\beta\) yields a  polygon \(P\) with \(4g-4\) sides. Moreover,
$
\operatorname{Area}(P)=\operatorname{Area}(N_g)=2\pi(g-2).
$
By Bezdek~\cite{MR823098}, among all hyperbolic \(n\)-gons enclosing a
fixed area, the regular \(n\)-gon has the smallest perimeter. Hence,
the perimeter of \(P\) is the perimeter of the regular
\((4g-4)\)-gon of area $2\pi(g-2)$. By the Gauss--Bonnet theorem for
hyperbolic polygons, each interior angle of $P$ is \(\pi/2\).
Hence, using the standard trigonometric formula (see~\cite[Chapter~2]{MR1183224}), we get, \[ \operatorname{Per}(P) \geq (8g-8)\operatorname{arccosh} \left( \sqrt{2}\cos\frac{\pi}{4g-4} \right) =2m_g. \] Since every side of \(P\) occurs twice when \(P\) is glued back to \(X\), $ \operatorname{Per}(P) = 2\bigl(\ell_X(\alpha)+\ell_X(\beta)\bigr) = 2\mathcal{F}_g(X). $ Therefore, $ \mathcal{F}_g(X)\geq m_g. $
\end{proof}

\medskip

The lemma above determines the minimum possible value of
\(\mathcal{F}_g\), which naturally leads us to study the hyperbolic
structures for which this minimum is attained.We define
\[
\mathcal{B}_g
=
\left\{
X\in\mathcal{M}_g
\;\middle|\;
\mathcal{F}_g(X)=m_g
\right\}.
\]
Thus, \(\mathcal{B}_g\) consists of those hyperbolic structures on
\(N_g\) for which a minimal filling pair realizes the minimum value \(m_g\).

\begin{pro}\label{5.2}
    For $g\ge3$, $\mathcal{B}_g$ is finite and $|\mathcal{B}_g|=N(g)$.
\end{pro}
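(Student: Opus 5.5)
I will construct a map $\Psi$ from the set of $\mathrm{Mod}(N_g)$-orbits of minimal filling pairs to $\mathcal{B}_g$ and show it is a bijection.

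\textbf{Equality case of the lemma.} Let $X\in\mathcal{M}_g$ satisfy $\mathcal{F}_g(X)=m_g$, and let $(\alpha,\beta)$ realize the minimum. Cutting $X$ along the geodesics $\alpha\cup\beta$ gives a hyperbolic $(4g-4)$-gon $P$ of area $2\pi(g-2)$ and perimeter $2m_g$. By the equality case in Bezdek's isoperimetric inequality, $P$ is the regular $(4g-4)$-gon $P_g$ with all interior angles $\pi/2$. So $X$ is obtained from $P_g$ by gluing its sides in pairs according to the boundary word of $(\alpha,\beta)$, i.e.\ according to the filling permutation $\sigma^{(\alpha,\beta)}$, with every side glued isometrically.

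\textbf{Defining $\Psi$ and surjectivity.} Given a minimal filling pair $(\alpha,\beta)$ with polygon $P(\alpha,\beta)$, realize it on $P_g$. The side pairings are isometries of equal-length sides, and each vertex class consists of four corners of angle $\pi/2$, summing to $2\pi$. Poincar\'e's polygon theorem then gives a hyperbolic structure $X_{(\alpha,\beta)}$ on $N_g$. In this structure the two curves are closed geodesics: at each vertex the $\alpha$-arcs meet at angle $\pi$, because the opposite corners pair up straight. Hence $\ell_X(\alpha,\beta)=m_g$, and with the lemma this gives $X_{(\alpha,\beta)}\in\mathcal{B}_g$. If $f\in\mathrm{Mod}(N_g)$, then $f\cdot(\alpha,\beta)$ has the same combinatorial gluing pattern, so it yields an isometric surface. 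Thus $\Psi([\alpha,\beta])=[X_{(\alpha,\beta)}]$ is well defined. The equality case above shows that every $X\in\mathcal{B}_g$ arises this way, so $\Psi$ is surjective.

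\textbf{Finiteness.} The set of minimal-filling combinatorial types is finite; by the upper bound of Theorem~\ref{bounds}, $N(g)<\infty$. Surjectivity then gives $|\mathcal{B}_g|\le N(g)<\infty$.

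\textbf{Injectivity (main obstacle).} Suppose $X_{(\alpha,\beta)}$ and $X_{(\alpha',\beta')}$ are isometric. I would encode each as an equivariant tiling of $\mathbb{H}^2$ by copies of $P_g$, with group $\pi_1$ acting as an NEC group, and compare their Delaney--Dress symbols. Lemma~\ref{dd symbol} turns isometry-with-tiling into an isomorphism of symbols, which is exactly the relabeling data of Proposition~\ref{3.3}. This yields a homeomorphism carrying one pair to the other.

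The delicate point is that an isometry $X\to X'$ need not a priori carry $\alpha\cup\beta$ to $\alpha'\cup\beta'$, since $X$ could contain several minimal filling pairs of length $m_g$. To handle this, I would first try to show that on such an $X$ the length-$m_g$ minimal filling pair is unique up to isometries of $X$. The argument would be: any other such pair also cuts $X$ into a copy of $P_g$, so both give tilings by regular right-angled $(4g-4)$-gons. I would then compare them via their symbols, arguing that the symbol is determined by $X$ together with the tiling. If that fails, I would fall back on defining $\mathcal{B}_g$ to count pairs $(X,\text{tiling})$, which is what the Delaney--Dress correspondence naturally counts. Injectivity together with surjectivity gives $|\mathcal{B}_g|=N(g)$.
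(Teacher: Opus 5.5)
Your well-definedness and surjectivity arguments are correct and agree with the paper's map $\Phi\colon\mathcal{N}(g)\to\mathcal{B}_g$. The paper's ``surjectivity is trivial'' is exactly your equality case of Bezdek together with the Poincar\'e polygon realization. For finiteness you take a different and more elementary route. The paper deduces it from properness of $\mathcal{F}_g$ (Lemma~\ref{proper}) and the topological Morse property (Lemma~\ref{morse}, only sketched by reference). You simply note that $\mathcal{B}_g$ is the image of the finite set $\mathcal{N}(g)$. This is rigorous and gives finiteness together with $|\mathcal{B}_g|\le N(g)$ without any of that machinery.

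The genuine gap is injectivity, i.e.\ $N(g)\le|\mathcal{B}_g|$, which you identify but do not prove. What is needed is exactly the paper's Lemma~\ref{dd symbol lemma}: on a fixed $X=\mathbb{H}^2/\Gamma\in\mathcal{B}_g$, any two minimal filling pairs of length $m_g$ lie in one $\mathrm{Mod}(N_g)$-orbit. Injectivity then follows by pulling $(\alpha',\beta')$ back along the isometry. Your suggested justification, that ``the symbol is determined by $X$ together with the tiling'', is true but beside the point. The two pairs give two $\Gamma$-invariant tilings $\mathcal{T},\mathcal{T}'$, hence two symbols, and the whole question is whether these are isomorphic.

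Be careful not to settle this by matching numerical data. The paper's proof of Lemma~\ref{dd symbol lemma} only checks that $m_{01}=4g-4$, $m_{02}=2$, $m_{12}=4$ and $|\mathcal{D}|=8g-8$ agree. That cannot give an isomorphism, because $\tilde\sigma_2$ encodes the side pairing. Applied to two different points of $\mathcal{B}_g$, the same reasoning would make them isometric and force $|\mathcal{B}_g|=1$.

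A real rigidity argument is required. One possible route, which I have only sketched:
\begin{enumerate}
\item Both tilings are copies of the regular $\{4g-4,4\}$ tiling, so $\mathcal{T}'=h\mathcal{T}$ for some isometry $h$.
\item $\Gamma$ has finite index in both $\mathrm{Sym}(\mathcal{T})$ and $h\,\mathrm{Sym}(\mathcal{T})\,h^{-1}$, so $h$ commensurates the extended triangle group $\Delta^*(2,4,4g-4)$.
\item For $g\ge 5$ this group should be non-arithmetic and maximal. Margulis' commensurator theorem would then give $h\in\mathrm{Sym}(\mathcal{T})$, hence $\mathcal{T}'=\mathcal{T}$, so the geodesic graph $\alpha\cup\beta$ on $X$ is unique.
\end{enumerate}
The arithmetic cases $g=3,4$ would need a separate check.

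Even with uniqueness of the graph, an isometry may interchange $\alpha$ and $\beta$, so injectivity can only hold for orbits of unordered pairs $\{\alpha,\beta\}$. Your fallback of redefining $\mathcal{B}_g$ as a set of pairs $(X,\text{tiling})$ changes the statement rather than proving it.
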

Before going to the proof of the theorem, we need to prove the following lemma.
\begin{lemma}\label{proper}
    $\mathcal{F}_g$ is a proper function.
\end{lemma}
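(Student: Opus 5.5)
To prove that $\mathcal{F}_g$ is proper, I will show that every sublevel set $\{X\in\mathcal{M}_g : \mathcal{F}_g(X)\le L\}$ has compact closure. By Mumford's compactness criterion, which transfers to the non-orientable setting through the orientable double cover $\widetilde{X}$, a subset of $\mathcal{M}_g$ is precompact if and only if the systole $\mathrm{sys}(X)$ is bounded below by a uniform positive constant on it. So the whole argument reduces to the following: if $\mathcal{F}_g(X)\le L$, then $\mathrm{sys}(X)\ge \varepsilon(L)>0$.

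For the systole bound, fix $X$ with $\mathcal{F}_g(X)\le L$ and a minimal filling pair $(\alpha,\beta)$ realizing $\ell_X(\alpha)+\ell_X(\beta)\le L$; I take $\alpha,\beta$ to be geodesics. Let $\gamma$ be a shortest closed geodesic, which is simple. Since $(\alpha,\beta)$ fills, $\gamma$ must meet $\alpha$ or $\beta$; say it meets $\alpha$. Every geodesic arc crossing $\gamma$ traverses the whole embedded collar (or Möbius band) of Lemma~\ref{collar lemma}. In the two-sided case this gives $\ell_X(\alpha)\ge 2\omega_\gamma$. In the one-sided case the crossing segment passes through the Möbius band from boundary to boundary, so it still has length at least $\omega_\gamma$, and possibly $2\omega_\gamma$ after passing to the double cover. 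In either case
\[
L\ \ge\ \ell_X(\alpha)\ \ge\ \operatorname{arcsinh}\!\left(\frac{1}{\sinh(\ell_X(\gamma)/2)}\right).
\]
Since the right-hand side tends to $\infty$ as $\ell_X(\gamma)\to 0$, this yields $\ell_X(\gamma)\ge 2\operatorname{arcsinh}\bigl(1/\sinh L\bigr)=:\varepsilon(L)$.

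Continuity is also needed, both so that sublevel sets are closed and so that the minimum defining $\mathcal{F}_g$ is attained. Length functions $X\mapsto\ell_X(\alpha)$ are continuous on Teichmüller space. By the same collar argument run in reverse, on any compact set only finitely many mapping class orbits representatives of minimal filling pairs have length at most $L$, because the length spectrum is discrete and locally uniformly so. Hence, locally, $\mathcal{F}_g$ is a minimum of finitely many continuous functions, and therefore continuous. Combining this continuity with the systole bound and Mumford's criterion, the preimage of $[0,L]$ is closed and contained in a compact set, and so $\mathcal{F}_g$ is proper.

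The main obstacle I expect is justifying Mumford compactness and the collar crossing estimate in the non-orientable setting, particularly for one-sided $\gamma$. My first approach is to lift everything to the orientable double cover $\widetilde{X}\in\mathcal{M}(S_{g-1})$. There, lifts of $\alpha\cup\beta$ still fill with total length at most $2L$, and the orientable argument of Aougab--Huang applies directly. The systole bound on $\widetilde{X}$ then descends to $X$ and gives precompactness, since $X\mapsto\widetilde{X}$ is a proper map between moduli spaces.
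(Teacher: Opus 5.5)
Your proposal is correct and follows essentially the same route as the paper: the collar lemma shows that $\mathcal{F}_g(X)\le L$ forces the systole (equivalently, the injectivity radius) to be bounded below, the thick part of $\mathcal{M}_g$ is compact, and continuity of $\mathcal{F}_g$ makes the preimage closed. You go a little further than the paper by sketching why $\mathcal{F}_g$ is continuous (only finitely many filling pairs of bounded length near a given point) and why the thick part is compact (via the orientable double cover); the paper simply asserts the first and cites a reference for the second.
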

\begin{proof}
Let $K\subset \mathbb{R}$ be a compact set. Since $K$ is compact, there exists
$M>0$ such that
$
K\subset [m_g,M].
$
Now, we will show that $\mathcal{F}_g^{-1}(K)$ is compact.

Suppose $X\in \mathcal{M}_g$ and let $c=\operatorname{inj}(X)$ be its
injectivity radius. Then there exists a simple closed geodesic
$\gamma$ on $X$ such that
$
\ell(\gamma)=2c .
$
By Lemma~\ref{collar lemma}, $\gamma$ admits an embedded collar of width $\omega_\gamma$ such that $\omega_\gamma\rightarrow\infty$ as $\ell_X(\gamma)\rightarrow0$.
Let $(\alpha,\beta)$ be a filling pair on $X$ realizing
$\mathcal{F}_g(X)$. Since $(\alpha,\beta)$ fills $X$, at least one of $\alpha$ or $\beta$ intersects $\gamma$.
Every geodesic arc crossing the collar of $\gamma$ has length at least
$\omega_\gamma$. Therefore,
$
\mathcal{F}_g(X)
=\ell(\alpha)+\ell(\beta)
\ge \omega_\gamma.
$

Since $\omega_\gamma\to\infty$ when $\ell_X(\gamma)\to0$, it follows that
$
\mathcal{F}_g(X)\to\infty$
whenever $
\operatorname{inj}(X)\to0.
$
Hence there exists $\varepsilon=\varepsilon(M)>0$ such that
$
\mathcal{F}_g(X)\le M$
implies
$\operatorname{inj}(X)\ge \varepsilon .
$
Therefore,
\[
\mathcal{F}_g^{-1}(K)
\subset
\{X\in\mathcal{M}_g:\operatorname{inj}(X)\ge \varepsilon\}.
\]
By \cite[Proposition 2.3]{injectivityradius}, the latter set is compact.
Since $\mathcal{F}_g$ is continuous, $\mathcal{F}_g^{-1}(K)$ is a closed
subset of a compact set, hence compact.
\end{proof}
\begin{lemma}\label{morse}
    $\mathcal{F}_g$ is a topological Morse function.
\end{lemma}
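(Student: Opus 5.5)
The plan is to follow the strategy of Aougab--Huang \cite{MR3342680} for the orientable case. We work on the Teichm\"uller space $\mathcal{T}(N_g)$ rather than on $\mathcal{M}_g$, and realize the lift of $\mathcal{F}_g$ as a local minimum of finitely many real-analytic functions. To that setting we apply Akrout's criterion: a function that is locally the minimum of finitely many $C^2$ functions satisfying a convexity/eutaxy-type condition is a topological Morse function. Since $\mathcal{F}_g$ is $\mathrm{Mod}(N_g)$-invariant, being topologically Morse on $\mathcal{T}(N_g)$ descends to $\mathcal{M}_g$ in the orbifold sense.

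\textbf{Step 1: local finiteness.} Fix $X\in\mathcal{T}(N_g)$ and a small compact neighbourhood $U$ of $X$. By the argument in Lemma~\ref{proper}, on $U$ we have $\mathcal{F}_g\le M$ for some constant $M$. The length functions of distinct curves vary by a uniformly bounded multiplicative factor on $U$. Combined with discreteness of the length spectrum, only finitely many minimal filling pairs $(\alpha_1,\beta_1),\dots,(\alpha_k,\beta_k)$ have length at most $2M$ anywhere on $U$. Hence on $U$ we can write
\[
\mathcal{F}_g=\min_{1\le i\le k} f_i,\qquad f_i=\ell(\alpha_i)+\ell(\beta_i),
\]
and after shrinking $U$ we may keep only those $f_i$ with $f_i(X)=\mathcal{F}_g(X)$.

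\textbf{Step 2: regularity and convexity via the orientation double cover.} Let $\widetilde X\to X$ be the orientable double cover, a surface of genus $g-1$. The deck involution is antiholomorphic, and $\mathcal{T}(N_g)$ embeds as the fixed locus of the induced involution on $\mathcal{T}(S_{g-1})$. That involution is a Weil--Petersson isometry, so its fixed set is a totally geodesic real-analytic submanifold. Each $\ell_X(\gamma)$ equals either the length of one lift of $\gamma$ (when $\gamma$ is two-sided, since each of its two lifts has the same length) or half the length of the lift (when $\gamma$ is one-sided). So each $f_i$ is the restriction of a positive combination of geodesic length functions on $\mathcal{T}(S_{g-1})$. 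By Wolpert's theorem these are real-analytic and strictly convex along Weil--Petersson geodesics. Total geodesicity then gives strict convexity of each $f_i$ along Weil--Petersson geodesics inside $\mathcal{T}(N_g)$.

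\textbf{Step 3: applying Akrout's theorem.} With Steps 1 and 2 in hand, I would verify Akrout's hypotheses exactly as Aougab--Huang do. At a point $X$, either $0$ is not in the convex hull of $\{\nabla f_i(X)\}$, in which case $X$ is topologically regular. Or $0$ lies in the relative interior of that hull (the eutaxy condition). In that case, on the common kernel of the gradients, the Hessian of the corresponding convex combination is positive definite by strict convexity. This gives a topologically critical point, of index equal to $\dim\mathrm{span}\{\nabla f_i(X)\}$.

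The main obstacle is the degenerate case in Step 3, where $0$ lies on the boundary of the convex hull but not in its relative interior. Akrout handles this by a perturbation/rank argument. I would first try to reduce to his setup by checking that our $f_i$ satisfy his standing assumptions (analyticity and a nondegeneracy condition on subfamilies). Failing that, I would import the Aougab--Huang argument verbatim, since it uses only convexity and analyticity, both of which Step 2 supplies. A secondary point needing care is the transfer of Wolpert convexity, particularly the correct length normalization for one-sided curves.
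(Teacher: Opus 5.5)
Your proposal is correct and follows essentially the paper's route. The paper simply defers to the Aougab--Huang argument, which is Akrout's theorem on generalized systoles applied to a locally finite family of sums of length functions that are strictly convex along Weil--Petersson geodesics; your orientation-double-cover and fixed-locus step is the input needed to transfer Wolpert's convexity to $\mathcal{T}(N_g)$, which the paper leaves implicit.

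Two small remarks. First, the case you flag as the main obstacle is already part of Akrout's conclusion: under the Hessian positivity you establish, non-eutactic points are topologically regular. Second, the bound $\mathcal{F}_g\le M$ on $U$ follows from continuity of the length of any one fixed minimal filling pair, not from the argument of the properness lemma.
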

\begin{proof}
    The proof follows from a similar argument as in the proof of \cite[Theorem 1.3]{MR3342680}.
\end{proof}
\begin{lemma}\label{dd symbol lemma}
    Let $(\alpha,\beta)$ and $(\tilde{\alpha},\tilde{\beta})$ be two minimal filling pair on a hyperbolic surface $X\in\mathcal{B}_g$ each of length $m_g$. Then there exist a homeomorphism $\phi:X\rightarrow X$ such that $\phi(\alpha,\beta)=(\tilde{\alpha},\tilde{\beta})$.
\end{lemma}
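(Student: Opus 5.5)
The plan is to use the equality case of the length bound to turn each length-$m_g$ minimal filling pair into a regular tiling, and then to invoke Lemma~\ref{dd symbol} to compare the two tilings.

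First, the equality case. If $(\alpha,\beta)$ has length $m_g$ on $X$, then cutting $X$ along the geodesic representatives gives a hyperbolic $(4g-4)$-gon $P$ of area $2\pi(g-2)$ and perimeter $2m_g$. By the uniqueness part of Bezdek's isoperimetric inequality, $P$ is the regular $(4g-4)$-gon with all interior angles $\pi/2$. Lifting to $\mathbb{H}^2$, the preimage of $\alpha\cup\beta$ cuts $\mathbb{H}^2$ into copies of this right-angled regular polygon. The resulting tiling $\mathcal{T}$ is therefore the regular $\{4g-4,4\}$ tiling: it is $4$-valent because the filling graph is $4$-valent, and the angles $\pi/2$ force exactly four tiles at each vertex. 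The same holds for $(\tilde\alpha,\tilde\beta)$, giving a tiling $\tilde{\mathcal{T}}$. Both tilings are equivariant for the same NEC group $\Gamma$ with $X=\mathbb{H}^2/\Gamma$.

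Next, compute the Delaney--Dress symbols of $(\mathcal{T},\Gamma)$ and $(\tilde{\mathcal{T}},\Gamma)$. In each case $\Gamma$ acts freely, and there is a single tile orbit and one vertex orbit per intersection point. The chamber set $\mathcal{D}$ has $8(4g-4)$ chambers, and the symbol data are $m_{01}=4g-4$ on every chamber, $m_{12}=4$ on every chamber, and $m_{02}=2$. Since the tiling is the full regular tiling, there is an isometry $\psi$ of $\mathbb{H}^2$ with $\psi(\mathcal{T})=\tilde{\mathcal{T}}$; both are the unique regular $\{4g-4,4\}$ tiling up to isometry. The real content is to show that $\psi$ can be chosen so that $\psi\Gamma\psi^{-1}=\Gamma$. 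For this I would build an isomorphism of the symbols $\pi:\mathcal{D}\to\tilde{\mathcal{D}}$ directly: send a chamber of $P$ at a chosen corner to a chamber of $\tilde P$ at a chosen corner, and propagate by the $\sigma_i$. Lemma~\ref{dd symbol} then yields an isometry $\psi$ with $\psi(\mathcal{T})=\tilde{\mathcal{T}}$ and $\psi\Gamma\psi^{-1}=\Gamma$. This $\psi$ descends to an isometry, in particular a homeomorphism, $\phi:X\to X$ carrying $\alpha\cup\beta$ onto $\tilde\alpha\cup\tilde\beta$. If necessary, compose with the swap of the two curves to get $\phi(\alpha,\beta)=(\tilde\alpha,\tilde\beta)$. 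Since $\phi$ maps standard cycles to standard cycles, the pair is carried to the pair.

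The main obstacle is making the propagated map $\pi$ well defined. The $\sigma_0$ and $\sigma_1$ moves stay inside the single polygon, where both sides are rotation and reflection symmetric, so they pose no problem. The $\sigma_2$ moves glue side pairs, and these follow the side-pairing patterns of the two polygons, which may differ. Consistency therefore needs the side-pairing words (the filling permutations, up to the relabelings of Proposition~\ref{3.3}) to agree. Here I would use that both structures live on the same surface $X$ with the same group $\Gamma$. The deck transformations identifying sides of $P$ and of $\tilde P$ are elements of $\Gamma$, and because the tiling is regular, the stabilizer of $\tilde{\mathcal{T}}$ in $\mathrm{Isom}(\mathbb{H}^2)$ acts transitively on flags. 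I would pick $\psi$ in this flag-transitive group sending a flag of $\mathcal{T}$ to a flag of $\tilde{\mathcal{T}}$, and then argue, by comparing orbit counts of chambers under $\Gamma$ and $\psi\Gamma\psi^{-1}$ inside the discrete group $\mathrm{Sym}(\tilde{\mathcal{T}})$, that $\psi$ normalizes $\Gamma$. I expect this normalizer step to be the hardest part, and it is where the argument could fail without extra input.
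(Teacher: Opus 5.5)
Your outline (Bezdek's equality case giving two $\Gamma$-invariant regular $\{4g-4,4\}$ tilings, then Lemma~\ref{dd symbol}) is the same as the paper's, and you have correctly identified the step that carries the content. But you leave that step open, and the tool you propose for it cannot close it.

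Because $\sigma_0,\sigma_1$ act inside a single tile, a symbol isomorphism $\pi$ exists exactly when the $\sigma_2$-actions agree up to a dihedral symmetry of the polygon. Those $\sigma_2$-actions are the side pairings of $P$ and $\tilde P$, so this condition just restates the conclusion, and ``propagate and check consistency'' is circular.

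The normalizer version does not help either. $\Gamma$ and $\psi\Gamma\psi^{-1}$ are both torsion-free subgroups of $\mathrm{Sym}(\tilde{\mathcal T})$, the extended $(2,4,4g-4)$ triangle group. They have the same index $|\mathcal D|=2(4g-4)=8g-8$ (not $8(4g-4)$), and each acts simply transitively on tiles. The only freedom in choosing $\psi$ is $\psi\mapsto t\psi$ with $t\in\mathrm{Sym}(\tilde{\mathcal T})$, which merely conjugates $\psi\Gamma\psi^{-1}$ inside $\mathrm{Sym}(\tilde{\mathcal T})$. So a choice with $\psi\Gamma\psi^{-1}=\Gamma$ exists iff the two subgroups are already conjugate in $\mathrm{Sym}(\tilde{\mathcal T})$. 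By Lemma~\ref{dd symbol}, such conjugacy classes correspond to the distinct gluing patterns of the right-angled $(4g-4)$-gon. There are many of these, all of the same index, so comparing chamber-orbit counts gives no information. The paper's written proof also skips this point: it infers that the symbols are isomorphic only from equality of the $m_{ij}$ and of $|\mathcal D|$, and these data do not determine $\sigma_2$.

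The missing idea has to use that one and the same $\Gamma$ preserves both tilings. For instance, $\Gamma\subseteq\mathrm{Sym}(\mathcal T)\cap\psi\,\mathrm{Sym}(\mathcal T)\psi^{-1}$ with finite index in both, so $\psi$ commensurates $\mathrm{Sym}(\mathcal T)$.
\begin{itemize}
\item For $g\ge5$, Takeuchi's criterion shows the $(2,4,4g-4)$ triangle group is non-arithmetic, so by Margulis its commensurator is discrete. Since $(2,4,n)$ is a maximal Fuchsian group for $n\neq 8$ (Singerman), this forces $\psi\in\mathrm{Sym}(\mathcal T)$, hence $\tilde{\mathcal T}=\mathcal T$. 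Each curve is recovered from the right-angled $4$-valent graph by going straight through the vertices, so $\{\tilde\alpha,\tilde\beta\}=\{\alpha,\beta\}$.
\item For $g=3,4$, the groups $(2,4,8)$ and $(2,4,12)$ are arithmetic, and $(2,4,8)$ has index $3$ in $(2,3,8)$. This argument is then unavailable, and something further is needed.
\end{itemize}
Finally, ``compose with the swap of the two curves'' is not a legitimate step. In general no homeomorphism of $X$ exchanges $\alpha$ and $\beta$, so you can only expect the conclusion for unordered pairs.
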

\begin{proof}
The proof follows the similar argument as that of
\cite[Proposition 7.6]{MR4926623}. Let $(\alpha,\beta)$ and
$(\tilde{\alpha},\tilde{\beta})$ be two minimal filling pairs on
$X\in\mathcal{B}_g$, each consisting of curves of length
$m_g$.

Let $\Gamma$ be a NEC group such that
$
X=\mathbb{H}^2/\Gamma.
$
The filling pairs $(\alpha,\beta)$ and
$(\tilde{\alpha},\tilde{\beta})$ determine two tilings
$\mathcal{T}$ and $\mathcal{T}'$ of $\mathbb{H}^2$ by right-angled
regular $(4g-4)$-gons. Hence we obtain two equivariant tilings
$(\mathcal{T},\Gamma)$ and $(\mathcal{T}',\Gamma)$.

Let $\mathcal{C}_{\mathcal{T}}$ and $\mathcal{C}_{\mathcal{T}'}$
be the corresponding chamber systems, and let
$(\mathcal{D};m)$ and $(\mathcal{D}';m')$ be the associated
Delaney--Dress symbols.
Since each tile is a right-angled $(4g-4)$-gon, we have
\begin{align*}
    m_{02}(D)&=2=m'_{02}(D'),\\m_{12}(D)&=4=m'_{12}(D'),\\m_{01}(D)&=4g-4=m'_{01}(D')
\end{align*}
for all $D\in\mathcal{D}$ and $D'\in\mathcal{D}'$. Moreover,
$
|\mathcal{D}|=|\mathcal{D}'|=8g-8.
$
Therefore, the Delaney--Dress symbols
$(\mathcal{D};m)$ and $(\mathcal{D}';m')$ are isomorphic. By
Lemma~\ref{dd symbol}, it follows that the equivariant tilings
$(\mathcal{T},\Gamma)$ and $(\mathcal{T}',\Gamma)$ are
equivariantly equivalent. Hence, there exists a homeomorphism
$
\phi:\mathbb{H}^2\longrightarrow\mathbb{H}^2
$
such that
$\phi(\mathcal{T})=\mathcal{T}'.
$
Since $\phi\Gamma\phi^{-1}=\Gamma$, the map $\phi$ descends to a
homeomorphism
$
\tilde{\phi}:X\longrightarrow X.
$
By construction, $\tilde{\phi}$ sends the filling pair
$(\alpha,\beta)$ to $(\tilde{\alpha},\tilde{\beta})$.
\end{proof}
\medskip

\textbf{Proof of \propref{5.2}}
By Lemma~\ref{proper}, the function $\mathcal{F}_g$ is proper. Hence, 
the set
$
\mathcal{B}_g=\mathcal{F}_g^{-1}(m_g)
$
is compact.
By Lemma~\ref{morse}, $\mathcal{F}_g$ is a topological Morse function. Therefore,
$\mathcal{B}_g$ are isolated. Consequently, it is a compact
discrete subset of $\mathcal{M}_g$, and hence $\mathcal{B}_g$ is finite.

Now we define a map
$
\Phi:\mathcal{N}_g\longrightarrow\mathcal{B}_g.
$
To each mapping class group orbit of a minimal filling pair, we associate
the corresponding right-angled regular polygon with its side-pairing
data. More precisely, let
$
[(\alpha,\beta)]\in\mathcal{B}_g
$
be a mapping class group orbit of minimal filling pairs on $N_g$. Cutting
$N_g$ along $\alpha\cup\beta$ yields a hyperbolic
$(4g-4)$-gon.
We define $\Phi$ by replacing this polygon with the right-angled regular
hyperbolic $(4g-4)$-gon together with a side-pairing map determined by the
identifications of the sides arising from $\alpha$ and $\beta$. injectivity of $\Phi$ follows from Lemma~\ref{dd symbol lemma} and surjectivity is trivial.
Consequently,
\[
|\mathcal{B}_g|=|\mathcal{N}_g|=N(g).
\]
\qed

We now consider a generalized version of $\mathcal{F}_g$, where arbitrary
filling pairs are allowed instead of restricting to minimal filling pairs.
This allows us to study the shortest filling pair on a given hyperbolic
surface. Let $\mathcal{N}'_g$ denote the set of mapping class group orbits
of filling pairs on $N_g$. For $X\in\mathcal{M}_g$, define
\[
\mathcal{Y}_g:\mathcal{M}_g\longrightarrow\mathbb{R},
\qquad
\mathcal{Y}_g(X)
=
\min_{(\alpha,\beta)\in\mathcal{N}'_g}
\ell_X(\alpha,\beta).
\]

We next establish the minimum value for $\mathcal{Y}_g$ using the following
theorem of Sanki--Vadnere \cite{MR4278333}.

\begin{theorem}[Sanki--Vadnere]\label{sanki vadnere}
Suppose $P_i$'s are hyperbolic $2m_i$-gons, with $m_i\geq 2$, for
$i=1,2,\ldots,k$. Let $\mathcal{R}$ be a regular $N$-gon such that
\begin{enumerate}
    \item $N=4(1-k)+2\sum_{i=1}^k m_i$, and
    \item
    $
    \operatorname{area}(\mathcal{R})
    =
    \sum_{i=1}^k\operatorname{area}(P_i).
    $
\end{enumerate}
If $\mathcal{R}$ is not acute, then
$
\sum_{i=1}^k\operatorname{Per}(P_i)
\geq
\operatorname{Per}(\mathcal{R}).
$
\end{theorem}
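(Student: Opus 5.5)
The first step is to reduce to regular polygons. By Bezdek's isoperimetric inequality \cite{MR823098}, already used in the proof of the lower bound for $\mathcal{F}_g$, each $P_i$ can be replaced by the regular $2m_i$-gon $R_i$ with $\operatorname{area}(R_i)=\operatorname{area}(P_i)$ without increasing $\sum_i\operatorname{Per}(P_i)$. So it suffices to prove the inequality for regular polygons. I will parametrize a regular $n$-gon by its interior angle $\theta$. Its area is $A=(n-2)\pi-n\theta$, and its side length $s$ satisfies $\cosh(s/2)=\cos(\pi/n)/\sin(\theta/2)$. This gives an explicit perimeter function $p_n(A)=n s$ on $0<A<(n-2)\pi$. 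The hypothesis that $\mathcal{R}$ is not acute means its angle is at least $\pi/2$, that is, $\operatorname{area}(\mathcal{R})\le (N-4)\pi/2$.

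The second step is an induction on $k$, with $k=1$ trivial since then $N=2m_1$. For the inductive step I would merge two polygons $R_1,R_2$ into a single regular $(2m_1+2m_2-4)$-gon $R_{12}$ of area $A_1+A_2$. Replacing $R_1,R_2$ by $R_{12}$ preserves both the side-count relation (1) with $k-1$ polygons and the area relation (2). Note that $2m_1+2m_2-4=2(m_1+m_2-2)$ is even with $m_1+m_2-2\ge 2$. One point needs checking: the intermediate polygon $R_{12}$ must again satisfy whatever angle condition the two-polygon lemma requires. I would order the merges so that this is inherited from the final polygon $\mathcal{R}$. Since $\mathcal{R}$ has the largest number of sides, and its area is the total, a non-acute $\mathcal{R}$ should force its area per side to be small enough that partial merges are also non-acute; I would verify this via the monotonicity of $A/(n-4)$ along the merges.

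The core is the two-polygon lemma: if the regular $(n_1+n_2-4)$-gon of area $A_1+A_2$ has angle $\ge\pi/2$, then
\[
p_{n_1}(A_1)+p_{n_2}(A_2)\ \ge\ p_{n_1+n_2-4}(A_1+A_2).
\]
I would try a geometric proof first. Place the two right-angled or obtuse configurations so that they share a vertex, and cut and reglue along a geodesic segment. This would show that a (non-regular) $(n_1+n_2-4)$-gon of area $A_1+A_2$ with perimeter at most $\operatorname{Per}(R_1)+\operatorname{Per}(R_2)$ exists; Bezdek's inequality then finishes. The loss of $4$ sides corresponds to two vertices of each polygon being absorbed at the junction. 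If the cut-and-paste fails to control the perimeter, I would fall back on an analytic argument. Fix the total area and minimize $p_{n_1}(A_1)+p_{n_2}(A_2)$ over $A_1+A_2=A$. Using convexity of $p_n$ in $A$, which I would check from the explicit formula, the minimum is at the Lagrange point where $p_{n_1}'(A_1)=p_{n_2}'(A_2)$. Then compare that value with $p_{n_1+n_2-4}(A)$ by a direct trigonometric estimate in the angle variables. I expect this comparison to be the main obstacle, and it is exactly where the non-acute hypothesis enters: for acute targets the inequality can fail, so the estimate must use $\theta\ge\pi/2$ in an essential way, presumably through the sign of a derivative of $n\,\operatorname{arccosh}\bigl(\cos(\pi/n)/\sin(\theta/2)\bigr)$ with respect to $n$ at fixed area.
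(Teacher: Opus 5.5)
The paper does not prove this statement; it quotes it from Sanki--Vadnere \cite{MR4278333}, so your proposal has to stand on its own. Your outer structure is sound: Bezdek reduces to regular polygons, $k=1$ is then immediate, and merging two polygons into a regular $(n_1+n_2-4)$-gon of area $A_1+A_2$ preserves (1) and (2).

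The merge bookkeeping works, but not for the reason you give. Put $x_i=\frac{\pi}{2}(n_i-4)-A_i$. A merged polygon is non-acute iff the sum of the merged $x_i$ is $\ge 0$, and $\mathcal{R}$ non-acute means $\sum_i x_i\ge 0$. Some pairs fail: two quadrilaterals merge to a quadrilateral, which is always acute. The two largest $x_i$, however, always have nonnegative sum, so merge those. This means the two-polygon lemma must be proved with no angle condition on $R_1,R_2$ themselves.

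The genuine gap is that the two-polygon lemma, which carries all the content of the theorem, is not proved.

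Your geometric route has no mechanism. Gluing two polygons along a side of common length removes four sides only if the angles at both ends of that side sum to exactly $\pi$. That happens for adjacent complementary regions of two transverse geodesics, whose angles at a vertex are $\phi$ and $\pi-\phi$. Regular polygons with arbitrary angles and side lengths admit no such gluing, and you never say what the cut-and-reglue is.

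Your analytic route rests on a false premise: $p_n$ is not convex in $A$. From your own parametrization, $p_n'(A)=\coth(s/2)\cot(\theta/2)$, which tends to $+\infty$ as $A\to 0^+$. Indeed $p_n(A)\sim 2\sqrt{n\tan(\pi/n)\,A}$, so $p_n$ is concave near $0$. Consequently, for small total area the function $A_1\mapsto p_{n_1}(A_1)+p_{n_2}(A-A_1)$ is concave. Its Lagrange point is then a maximum, and the infimum is approached at an endpoint. In general there can be several interior critical points.

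What is missing is an actual estimate showing $p_{n_1}(A_1)+p_{n_2}(A_2)\ge p_{n_1+n_2-4}(A_1+A_2)$ on the whole admissible region $A_1+A_2\le\frac{\pi}{2}(n_1+n_2-8)$. It must use non-acuteness essentially, since the inequality fails for acute targets, for example two quadrilaterals of area near $\pi$. You only conjecture that a derivative in $n$ has the right sign, so the argument is incomplete exactly at its main step.
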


The above theorem applied to the polygonal decomposition associated to a
filling pair gives the following lower bound.

\begin{pro}\label{Yg lower bound}
Let $g\geq 3$. Then, for every $X\in\mathcal{M}_g$,
$
\mathcal{Y}_g(X)\geq m_g.
$
\end{pro}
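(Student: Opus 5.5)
Take $X\in\mathcal{M}_g$ and a filling pair $(\alpha,\beta)$ on $X$, realized by geodesics, which are in minimal position. Set $i=i(\alpha,\beta)$. By the proof of Lemma~\ref{minimally intersecting}, $X\setminus(\alpha\cup\beta)$ is a disjoint union of $b$ disks with $i=g-2+b$. Call these complementary polygons $P_1,\dots,P_b$. Each vertex of the $4$-valent graph $\alpha\cup\beta$ contributes four corners, and along every side of a polygon the curves alternate between $\alpha$ and $\beta$. Hence each $P_j$ is a $2m_j$-gon, and
\[
\sum_{j=1}^b 2m_j = 4i .
\]
Because geodesics do not form bigons, $m_j\geq 2$ for every $j$. (A $2$-gon would be a bigon, and monogons are impossible since $\alpha$ and $\beta$ are simple and the sides alternate.)

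We now apply Theorem~\ref{sanki vadnere} with $k=b$. The required value of $N$ is
\[
N=4(1-b)+2\sum m_j=4-4b+4i=4-4b+4(g-2+b)=4g-4 .
\]
The total area is $\sum \operatorname{area}(P_j)=\operatorname{area}(X)=2\pi(g-2)$. So $\mathcal{R}$ is the regular $(4g-4)$-gon of area $2\pi(g-2)$. By Gauss--Bonnet its angle sum is $(4g-6)\pi-2\pi(g-2)=2\pi g-2\pi$, so each interior angle equals $\frac{2\pi(g-1)}{4g-4}=\pi/2$. Thus $\mathcal{R}$ is right-angled and in particular not acute, so the hypothesis of Theorem~\ref{sanki vadnere} holds. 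This is the same polygon used in the lemma giving $\mathcal{F}_g\geq m_g$, with perimeter $(8g-8)\operatorname{arccosh}\bigl(\sqrt2\cos\frac{\pi}{4g-4}\bigr)=2m_g$.

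Every edge of $\alpha\cup\beta$ appears exactly twice among the sides of the $P_j$: the two sides of the edge become two boundary sides, whether or not the edge has a neighborhood with a twist. Therefore
\[
\sum_j\operatorname{Per}(P_j)=2\bigl(\ell_X(\alpha)+\ell_X(\beta)\bigr).
\]
Theorem~\ref{sanki vadnere} then gives $2\ell_X(\alpha,\beta)\geq 2m_g$. Minimizing over all filling pairs yields $\mathcal{Y}_g(X)\geq m_g$.

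The main point needing care is checking that the hypotheses of Theorem~\ref{sanki vadnere} hold on a non-orientable surface. That theorem is a statement purely about abstract hyperbolic polygons, so orientability plays no role once the polygons are cut out. What remains to verify is the bookkeeping: the even number of sides $2m_j$ with $m_j\ge2$, the identity $N=4g-4$, and the count that each edge contributes exactly two sides. These follow from the alternation of $\alpha$ and $\beta$ at each $4$-valent vertex and from the Euler characteristic relation $i=g-2+b$.
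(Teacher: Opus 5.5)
Your proposal is correct and follows the paper's proof essentially step for step. You use the Euler characteristic relation to get $\sum_j 2m_j=4(g-2+b)$ and hence $N=4g-4$, identify $\mathcal{R}$ as the right-angled regular $(4g-4)$-gon of perimeter $2m_g$, double-count $\sum_j\operatorname{Per}(P_j)=2\ell_X(\alpha,\beta)$, and apply Theorem~\ref{sanki vadnere}, adding along the way a check of two hypotheses the paper leaves implicit: $m_j\geq 2$, and that $\mathcal{R}$ is not acute (via Gauss--Bonnet).
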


\begin{proof}
Let $(\alpha,\beta)$ be a filling pair on $X\in\mathcal{M}_g$, and let
$P_1,\ldots,P_k$ be the polygons in complementary decomposition
$
X\setminus(\alpha\cup\beta).
$
Suppose that each $P_i$ is a hyperbolic $2m_i$-gon. The graph
$\alpha\cup\beta$ has $g+k-2$ vertices and $2(g+k-2)$ edges. Hence,
counting the sides of the complementary polygons, we have
$
\sum_{i=1}^k 2m_i=4(g+k-2).
$
Let $\mathcal{R}$ be a regular $N$-gon, where
$
N=4(1-k)+\sum_{i=1}^k2m_i=4g-4.
$
Moreover,
\[
\operatorname{area}(\mathcal{R})
=
\sum_{i=1}^k\operatorname{area}(P_i)
=
\operatorname{area}(X)
=
2\pi(g-2).
\]
Thus $\mathcal{R}$ is the right-angled regular hyperbolic
$(4g-4)$-gon, and consequently
$
\operatorname{Per}(\mathcal{R})=2m_g.
$
Therefore, by Theorem~\ref{sanki vadnere},
\[
2\ell_X(\alpha,\beta)
=
\sum_{i=1}^k\operatorname{Per}(P_i)
\geq
\operatorname{Per}(\mathcal{R})
=
2m_g.
\]
Hence
$
\ell_X(\alpha,\beta)\geq m_g.
$
Since $(\alpha,\beta)$ is arbitrary, taking the minimum over all filling
pairs gives
$
\mathcal{Y}_g(X)\geq m_g.
$
\end{proof}
\bibliographystyle{plain}
\bibliography{ref}
\end{document}